\newtheorem{proposition}{Proposition}
\newtheorem{theorem}{Theorem}
\newtheorem{corollary}{Corollary}
\newtheorem{lemma}{Lemma}
\newtheorem{Definition}{Definition}
\newtheorem{remark}{Remark}[section]
\newtheorem{example}{Example}[section]
\newenvironment{proof}
  {\begin{trivlist}
  \item[\textit{\quad\quad\textsc{ Proof.}}]}
  {$\square$\end{trivlist}}
\title{Local and Global Aspects of Lie's
Superposition Theorem}
\author{David Bl\'azquez-Sanz \& Juan Jos\'e Morales-Ruiz\footnote{This research has been partially
 supported by grant MCyT-FEDER MTM2006-00478of Spanish goverment, and the Sergio Arboleda University 
Research Agency CIVILIZAR.}}
\begin{document}
\maketitle

\begin{abstract}
  In this paper we give the global conditions for an ordinary 
differential equation to admit a superposition law of solutions
in the classical sense. This completes the well-known Lie superposition
theorem. We introduce rigorous notions of pretransitive Lie group action
and Lie-Vessiot systems. We proof that pretransitive Lie group actions are
transitive. We proof that an ordinary differential equation 
admit a superposition law if and only if it is a pretransitive Lie-Vessiot 
system. It means that its enveloping algebra is spanned by 
fundamental fields of a pretransitive Lie group action. We discuss the relationship
of superposition laws with differential Galois theory and review
the classical result of Lie. 
\end{abstract}


\section{Introduction and Main Results}

Superposition laws of solutions of differential equations
attracted the attention of researchers since the statement
of the question by S. Lie in the 19th century. They
appear widely, sometimes as reflect of certain linear phenomena,
and sometimes as genuine non-linear superposition (see Example \ref{ExampleWeierstrass}). 
The local characterization of ordinary differential equations admitting 
superposition laws was partially done by S. Lie and G. Sheffers 
in \cite{Lie1885}. In the last quarter of 20th century, superposition
laws attracted again the attention of pure and applied mathematicians.
Shnider and Winternitz, in \cite{Winternitz1984}, classify the germs
of low dimension differential equations admitting superposition laws. This work
can be seen as a classification of germs of homogeneous spaces. There
is also a number of applied papers on the subject, including 
\cite{Winternitz1981, Winternitz1985, Winternitz1987, Winternitz1996,
Winternitz1999, CGR2001, CR2002}. A good treatment of superposition
laws can be seen in the textbook \cite{Ibragimov1999}, and there is
a monography completely devoted to the subject \cite{CGM2000}.

However, the theoretical approach to superposition laws 
relies is Lie's characterization. This is a local result, and avoid the 
global problem of construction of group. This problem
was tackled first by Vessiot in \cite{Vessiot1894}, who analyzed the same 
subject that Lie, but under a slighly different point of view, more
related with differential Galois theory. Instead of taking the Lie
algebra of the group from the joint invariants, Vessiot extract the
group structure directly from the superposition law. This idea is 
systematically used along Section \ref{SectionGlobal}. Those are
the key points that allow is to relate superposition of solutions
with differential Galois theory. 

 In the original infinitesimal Lie's approach, it is necessary to
integrate a Lie algebra of autonomous vector fields, in order to obtain 
a number of arbitrary constants that correspond to joint invariants 
-- by joint invariants we mean first integral of the differential equation lifted to 
the cartesian power. But, in the general case, there are not
appropiated joint invariants (see Example \ref{exampleLorentz}). 
Instead of this, we recover at least a foliation
on the cartesian power of the phase space, and each leave of the foliation
correspond to a determination of those theoretical arbitrary constants.
This defect was noticed by Cari\~nena, Grabowsky and Marmo \cite{CGM2007}.
They substitute the notion of superposition law for the milder notion of
local superposition; then the classical statement of Lie holds. 
Here we give the conditions for the existence of a superposition
law in the global classical sense, that is, a globally defined map that
allow us to recover the general solution.

  An ordinary differential equation admitting a \emph{fundamental
system of solutions} is, by definition, a system of non-autonomous
differential equations,
\begin{equation}\label{ATeq1}
  \frac{dx_i}{dt} = F_i(t,x_1,\dots,x_n) \quad i=1,\ldots,n
\end{equation}
for which there exists a set of formulae,
\begin{equation}\label{ATeq2}
\varphi_i(x^{(1)},\ldots,x^{(r)};\lambda_1,\ldots,\lambda_n) \quad
i=1,\ldots,n
\end{equation}
expressing the \emph{general solution} as function of $r$ particular
solutions of \eqref{ATeq1} and some arbitrary constants $\lambda_i$.
This means that for $r$ particular solutions \linebreak 
$(x_1^{(i)}(t),\ldots,x^{(i)}_n(t))$ of the
equations, satisfying certain non-degeneracy condition,
\begin{equation}\label{ATeq3}
  x_i(t) = \varphi_i(x^{(1)}_1(t),\ldots, x^{(1)}_n(t),\ldots,x^{(r)}_n(t);\lambda_1,\ldots,\lambda_n)
\end{equation}
is the general solution of the equation \eqref{ATeq1}. In
\cite{Lie1893} Lie also stated that the arbitrary constants
$\lambda_i$ parametrize the solution space, in the sense that for
different constants, we obtain different solutions: there are not
functional relations between the arbitrary constants $\lambda_i$.
The set of formulae $\varphi_i$ is usually referred to as a
\emph{superposition law for solutions} of \eqref{ATeq1}.

  The class of ordinary differential equations admitting \emph{fundamental
systems of solutions}, or \emph{superposition laws}, was introduced by S. Lie 
in 1885 \cite{Lie1885}, as certain class of auxiliary equations appearing 
in his integration methods for ordinary differential equations. Further 
development is due to Guldberg and mainly to Vessiot \cite{Vessiot1894}. 
The characterization of such class of differential equations is given by 
S. Lie and G. Scheffers in 1893 \cite{Lie1893}, and it is know as 
\emph{Lie's superposition theorem}, or \emph{Lie-Scheffers theorem}. There
is a Lie algebra canonically attached to any non-autonomous differential 
equation, the so called \emph{Lie-Vessiot-Guldberg} or \emph{enveloping} algebra. 
Lie superposition theorem states that a differential equation admit a 
superposition law if and only if its enveloping algebra is
finite dimensional. This result, as observed by some contemporary authors 
(see \cite{CGM2000} and \cite{CGM2007}), is not true in the general case. 
In fact, Lie theorem characterizes a bigger class of differential 
equations, whichever admit certain invariant foliation known as 
\emph{local superposition law}. Here we find the characterization of
differential equations admitting superposition laws in the classical sense. 
In order to do so, we introduce the concepts of \emph{Lie-Vessiot systems} and
\emph{pretransitive Lie group actions}. Finally we obtain the whole picture
of global superposition laws.

\medskip

{\bf Theorem \ref{T1} (Global Lie).} 
\emph{A non-autonomous complex analytic vector field $\vec X$
in a complex analytic manifold $M$ admits a superposition law if and only if
it is a Lie-Vessiot system related to a pretransitive Lie group action
in $M$.}

\medskip
  Using this global philosophy it is easy to jump into the
algebraic category. First, is follows easly that rational (\emph{id est}, given
by rational functions of the coordinates)
superposition laws lead to rational actions of algebraic groups.

\medskip
{\bf Theorem \ref{T2}} 
\emph{Let $\vec X$ be a non-autonomous meromorphic vector field
in an algebraic manifold $M$ that admits a rational superposition law. Then,
it is a Lie-Vessiot system related to an algebraic action of an algebraic
group $G$ if $M$.}

\medskip
This result allow us to connect Lie's result with the frame of differential 
Galois theory. The main purpose of differential Galois theory is to classify
differential equations with respect to their integrability and reducibility,
and of course, solve those that are integrable. The first step in differential
Galois theory is the theory due to Picard and Vessiot that deals with
linear differential equations. In this frame, there is a correspondence
betweend linear differential equations and their differential Galois groups, 
which are algebraic groups of matrixes. Being the Lie-Vessiot systems a natural
generalization of linear equations, then it is expectable to generalize
the differential Galois theory for them. In \cite{Kolchin1973}, E. Kolchin
gives the theory of \emph{strongly normal extensions},
a  generalization of Picard-Vessiot theory in which the Galois group
are arbitrary algebraic groups. However, this theory deals with differential
field extensions and not with differential equations. Thus, the connection
with superposition laws remained hidden for long time. This connection was
first noticed by J. Kovacic, and exposed in a series of lectures 
(including his conference in Algebraic Methods in Dynamical Systems, Barcelona 2008). 
Here we give this connection in a explicit way.

\medskip

{\bf Theorem \ref{T3}} 
\emph{Let $\vec X$ be a non-autonomous meromorphic vector field in an algebraic manifold $M$
with coefficients in the Riemann surface $S$ that admits a rational superposition law.
Let $L$ be the differenital field spanned by the coordinates of any fundamental 
system of solutions of $\vec X$. Then, $\mathcal M(S) \subset L$ is a strongly 
normal extension in the sense of Kolchin.
Moreover, if $G$ is the algebraic group of transformations in $M$ induced by the 
superposition law, each particular solution $\sigma(t)$ of the associated automorphic system $\vec A$in $G$ induce an
injective map,
$${\rm Aut}_{\mathcal M(S)}(L) \to G.$$
which is an anti-morphism of groups.}

%
%
%

\section{Non-autonomous Complex Analytic Vector Fields}

{\bf Notation.} From now on, the phase space $M$ is a complex analytic
manifold,  $S$ is a Riemann surface together with a
holomorphic derivation $\partial \colon \mathcal O_S \to \mathcal O_S$,
beig $\mathcal O_S$ the sheaf of holomorphic functions in $S$.  By
\emph{the extended phase space} we mean the cartesian power $S\times M$.
We shall denote $t$ for the general point of $S$ and $x$ for the
general point of $M$. We write $\bar x$ for a $r$-frame
$(x^{(1)},\ldots,x^{(r)})\in M^r$. Under this rule, we shall write
$f(x)$ for functions in $M$, $f(t)$ for functions in $S$ and
$f(t,x)$ for functions in the extended phase space. Whenever we need
we take a local system of coordinates $x_1,\ldots,x_n$ for $M$. The induced
system of coordinates in $M^r$, component by components is then
$x_1^{(1)},\ldots,x_n^{(1)},\ldots,x_1^{(r)},\ldots, x_n^{(r)}$. 
The general point of the cartesian power $M^{r+1}$
is seen as a pair $(\bar x, x)$ of an $r$-frame and an point of $M$. Thus, local
coordinates in $M^{r+1}$ are $x_1^{(1)},\ldots,x_n^{(1)},\ldots,x_1^{(r)},\ldots, x_n^{(r)},
x_1,\ldots,x_n.$

\begin{Definition}
  A non-autonomous complex analytic vector field $\vec X$ in $M$, depending on
  the Riemann surface $S$, is an autonomous vector field in $S\times M$ compatible with $\partial$
  in the following sense:
  $$\vec X f(t) = \partial f(t)\quad\quad
  \xymatrix{\mathcal O_{S\times M} \ar[r]^-{\vec X} & \mathcal O_{S\times M} \\
  \mathcal O_S \ar[r]^-{\partial}\ar[u] & \mathcal O_S \ar[u]}$$
\end{Definition}

  In each cartesian power $M^r$ of $M$ we consider the \emph{lifted} vector
field $\vec X^r$. This is just the direct sum copies of $\vec X$
acting in each component of the cartesian power $M^r$. We have the
local expression for $\vec X$,
$$\vec X = \partial + \sum_{i=1}^n F_i(t,x) \frac{\partial}{\partial
x_i},$$ and also the local expression for $\vec X^r$, which is a
non-autonomous vector field in $M^r$,
\begin{equation}\label{EQlocalX}
\vec X^r = \partial + \sum_{i=1}^n
F_i(t,x^{(1)})\frac{\partial}{\partial x_i^{(1)}} + \ldots +
\sum_{i=1}^n F_i(t,x^{(r)})\frac{\partial}{\partial x_i^{(r)}}.
\end{equation}

  Let us consider $\vec X$ a non-autonomous vector field in $M$.
We can see $\vec X$ as an holomorphic map $\vec X \colon S \to
\mathfrak X(M)$, which assigns to each $t_0\in S$ an autonomous
vector field $\vec X_{t_0}$.

\begin{Definition}
  The \emph{enveloping} algebra of $\vec X$ is the Lie algebra of
  vector fields in $M$ spanned by the set vector fields $\{\vec
  X_{t_0}\}_{t_0\in S}$. The enveloping algebra of $\vec
  X$ is denoted $\mathfrak g(\vec X)$.
\end{Definition}

\begin{remark}
  Assume that there exist $\vec X_1,\ldots, \vec X_s$ autonomous
vector fields in $M$ spanning a finite dimensional Lie algebra,
and holomorphic functions \linebreak $f_1(t),\ldots, f_s(t)$ in $S$ such
that,
\begin{equation}\label{LieOriginalCondition}
\vec X = \partial + \sum_{i=1}^s f_i(t)\vec X_i, \quad\quad
 [\vec X_i,\vec X_j] = \sum_k c_{ij}^k \vec X_k.
\end{equation}
It follows that the enveloping algebra of $\vec X$ is a subalgebra of the one spanned
by $\vec X_1,\ldots, \vec X_s$. Reciprocally, if the enveloping algebra of 
$\vec X$ is of finite dimension, let us consider a basis $\{\vec X_i\}$. 
The coordinates of $\vec X_t$ in such basis, when $t$ varies in $S$, define 
holomorphic functions $f_i(t)$. Thus, we obtain an expression as above 
for $\vec X$. Expression \eqref{LieOriginalCondition} is the condition 
expressed in \cite{Lie1893}  by S. Lie, and it is clear that it is equivallent to the finite
dimension of the enveloping algebra.
\end{remark}

\section{Superposition Laws}

\begin{Definition}
  A superposition law for $\vec X$ is an analytic map
  $$\varphi\colon U \times P \to M,$$
  where $U$ is analytic open subset of $M^r$ and $P$ is an
  $n$-dimensional manifold, verifying:
  \begin{itemize}
  \item[(a)] $U$ is union of integral curves of $\vec X^r$
  \item[(b)] If $\bar x(t)$ is a solution of $\vec X^r$, defined for $t$
  in some open subset $S'\subset S$, then $x_{\lambda}(t) =\varphi(\bar
  x(t); \lambda)$, is the general solution of $\vec X$ for $t$ varying in
  $S'$.
  \end{itemize}
\end{Definition}

  The problem of dependence on parameters can be reduced to
the dependence on initial conditions. The parameter space is then
substituted by the phase space. In virtue of the following proposition, from
now on we will assume that the space of parameters of any superposition law
is the phase space itself.  

\begin{proposition}
  If $\vec X$ admits a superposition law, 
$$\phi\colon U\times P \to M, \quad\quad U \subset M^r,$$
  then it admits another superposition law
$$\varphi\colon U\times M \to M, \quad\quad U\subset M^r,$$
whose space of parameters is the space of the
initial conditions in the following sense: for a given 
$t_0\in S$ there is a solution $\bar x(t)$ defined in a
neighborhood of $t_0$ such that for all $x\in M$,
$\varphi(\bar x(t_0); x) = x.$
\end{proposition}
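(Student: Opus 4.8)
The plan is to start from the given superposition law $\phi\colon U\times P\to M$ with $U\subset M^r$, and to absorb the parameter space $P$ into the phase space by enlarging the frame. The key observation is that condition (b) says that for a solution $\bar x(t)$ of $\vec X^r$, the map $\lambda\mapsto \phi(\bar x(t);\lambda)$ parametrizes \emph{all} solutions of $\vec X$ in a one-to-one fashion (this is Lie's non-degeneracy requirement: different $\lambda$ give different solutions, no functional relations). So for a generic choice of an extra initial point $x^{(r+1)}_0\in M$ and a solution $x^{(r+1)}(t)$ of $\vec X$ through it, there is a \emph{unique} $\lambda(\bar x_0, x^{(r+1)}_0)\in P$ with $\phi(\bar x(t); \lambda) = x^{(r+1)}(t)$; equivalently, $\phi(\bar x(t_0);\lambda(\bar x_0,x^{(r+1)}_0)) = x^{(r+1)}_0$. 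First I would make this map $\lambda$ into an analytic map $\Lambda\colon U'\to P$ defined on a suitable open $U'\subset M^{r+1}$, using the analytic implicit function theorem applied to $\phi$ on a slice $\{t=t_0\}$, where the non-degeneracy condition supplies the needed rank/invertibility on the $\lambda$-variables.

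Next I would define the new superposition law on $r+1$ frames by
$$\varphi\colon U'\times M \to M, \qquad \varphi(x^{(1)},\ldots,x^{(r)},x^{(r+1)}; x) := \phi\bigl(x^{(1)},\ldots,x^{(r)}; \Lambda(x^{(1)},\ldots,x^{(r)},x^{(r+1)})\bigr),$$
taking the new parameter variable $x$ to range over $M$ but — and this is the trick — feeding it in through the $(r+1)$-st frame slot rather than directly: more precisely I would set up $U'$ and $\Lambda$ so that the $(r+1)$-st slot plays the role of the initial condition. Concretely, I would verify that $U'$ can be chosen to be a union of integral curves of $\vec X^{r+1}$ (this is where property (a) for $\phi$, together with the fact that $x^{(r+1)}(t)$ solves $\vec X$, gets used, after checking that $\Lambda$ is constant along trajectories of $\vec X^{r+1}$ — which follows because both $\bar x(t)$ and $x^{(r+1)}(t)$ evolve by $\vec X$ and $\lambda$ was characterized trajectory-wise). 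Then property (b) for $\varphi$: given a solution $(\bar x(t), x^{(r+1)}(t))$ of $\vec X^{r+1}$ and a point $x\in M$, I need $t\mapsto \varphi(\bar x(t), x^{(r+1)}(t); x)$ to run over the general solution of $\vec X$, with the value at $t_0$ equal to $x$ when $x^{(r+1)}_0$ is chosen appropriately — but the point of putting the parameter in the $(r+1)$-st slot is exactly that $\varphi(\bar x(t_0), x; x) = \phi(\bar x(t_0);\Lambda(\bar x(t_0), x)) = x$ by the defining property of $\Lambda$. That is the normalization demanded in the statement.

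The main obstacle I expect is the bookkeeping of domains: ensuring that there is a single open set $U'\subset M^{r+1}$, a union of integral curves of $\vec X^{r+1}$, on which $\Lambda$ is simultaneously well-defined, analytic, and such that the composed map $\varphi$ still has the general-solution property (b) for \emph{every} solution in $U'$ and \emph{every} parameter $x\in M$. The non-degeneracy condition in the definition of superposition law is only assumed to hold on solutions ``satisfying certain non-degeneracy condition,'' so I would need to shrink $U$ (hence $U'$) to the open locus where the Jacobian of $\lambda\mapsto\phi(\cdot;\lambda)$ along a time-slice is nonsingular, and check this locus is still a union of trajectories of $\vec X^r$ (respectively $\vec X^{r+1}$) — which holds because the non-degeneracy locus is invariant under the flow, the flow being a local isomorphism. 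The rest — analyticity of $\Lambda$, invariance of $\Lambda$ under $\vec X^{r+1}$, and the two verifications (a), (b) for $\varphi$ — is then a routine unwinding of definitions together with the implicit function theorem in the analytic category.
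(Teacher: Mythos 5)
Your construction has a genuine gap, and it sits exactly at the point where you plug $\Lambda$ back into $\phi$. As displayed,
$$\varphi(x^{(1)},\ldots,x^{(r)},x^{(r+1)};x)=\phi\bigl(x^{(1)},\ldots,x^{(r)};\Lambda(x^{(1)},\ldots,x^{(r)},x^{(r+1)})\bigr)$$
does not depend on the parameter $x$ at all, so for a fixed solution of $\vec X^{r+1}$ the family $x\mapsto\varphi(\cdot\,;x)$ consists of a single curve (namely $x^{(r+1)}(t)$ itself, since $\phi(\bar y;\Lambda(\bar y,z))=z$) and condition (b) fails. Your proposed repair --- ``feeding $x$ in through the $(r+1)$-st slot'' --- fares no better: because $\Lambda(\bar y,z)$ is by definition the unique $\lambda$ with $\phi(\bar y;\lambda)=z$, the map $\varphi(\bar x;x)=\phi(\bar x;\Lambda(\bar x,x))$ is identically equal to $x$, so $t\mapsto\varphi(\bar x(t);x)$ is constant and again is not the general solution of $\vec X$. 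The defect common to both versions is that the frame argument of $\Lambda$ is allowed to move with $\bar x$ (equivalently with $t$), which exactly cancels the $t$-dependence you need to retain.

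The fix is to freeze that argument, and this is precisely the paper's (much shorter) proof: choose one $t_0$ and one fundamental system $\bar x_0(t)$, set $\xi(\lambda)=\phi(\bar x_0(t_0);\lambda)$ --- in your notation $\xi^{-1}=\Lambda(\bar x_0(t_0),\cdot)$ --- observe that $\xi\colon P\to M$ is a bijection because every initial condition at $t_0$ is attained by exactly one of the solutions $\phi(\bar x_0(t);\lambda)$ and distinct $\lambda$ give distinct solutions, and define $\varphi(\bar x;x)=\phi(\bar x;\xi^{-1}(x))$ on the \emph{same} domain $U\subset M^r$. No enlargement of the frame to $M^{r+1}$, no flow-invariance of $\Lambda$, and no shrinking of $U$ is required; the normalization $\varphi(\bar x_0(t_0);x)=x$ is immediate, and property (b) for $\varphi$ follows from property (b) for $\phi$ because $\xi^{-1}$ is a fixed bijection of the parameter spaces. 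Note also that the statement only asks the normalization to hold for \emph{one} solution $\bar x(t)$ and one $t_0$, so the domain bookkeeping you worry about in your last paragraph does not arise.
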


\begin{proof}
  Consider a superposition law $\phi$ as in the statement. 
  Let us choose $t_0\in S$ and
  certain local solution $\bar x_0(t)$ of $\vec X^r$. Then,
  $$x(t;\lambda) = \phi(\bar x_0(t);\lambda),$$ is the general
  solution. Define:
  $$\xi\colon P \to M,\quad \lambda\mapsto x(t_0;\lambda),$$
  then, by local existence and uniqueness of solutions for
  differential equations, for each $x_0\in M$ there exist a unique local
  solution $x(t)$ such that $x(t_0) = x_0$. Hence, there exist a map
  $\varphi$,
  $$\xymatrix{U\times P  \ar[r]^-{\phi} \ar[d]_-{Id\times \xi} & M \\ U \times M \ar[ru]_-{\varphi}}$$
  which is a superposition law for $\vec X$ satisfying the assumptions of the statement.
\end{proof}

\begin{example}[Linear systems]
Let us consider a linear system of ordinary differential equations,
$$\frac{dx_i}{dt} = \sum_{j=1}^n a_{ij}(t)x_j, \quad i=1,\ldots,n$$
as we should know, a linear combination of solutions of this system
is also a solution. Thus, the solution of the system is a $n$
dimensional vector space, and we can express the global solution as
linear combinations of $n$ linearly independent solutions. The
superposition law is written down as follows,
$$\mathbb C^{n\times n}_{x_i^{(j)}} \times \mathbb C^n_{\lambda_j} \to \mathbb
C^n, \quad (x_i^{(j)},\lambda_j) \mapsto (y_i) \quad y_i =
\sum_{j=1}^n \lambda_jx_i^{(j)}.$$
\end{example}

\begin{example}[Riccati equations]
Let us consider the ordinary differential equation,
$$\frac{dx}{dt} = a(t) + b(t)x + c(t)x^2,$$
let us consider four different solutions
$x_1(t),x_2(t),x_3(t),x_4(t)$. A direct computation gives that the
anharmonic ratio is constant,
$$\frac{d}{dt} \frac{(x_1 - x_2)(x_3-x_4)}{(x_1-x_4)(x_3-x_2)} =
0.$$If $x_1, x_2, x_3$ represent three known solutions, we can
obtain the unknown solution $x$ from the expression,
$$\lambda = \frac{(x_1 - x_2)(x_3-x)}{(x_1-x)(x_3-x_2)}$$
obtaining,
$$x = \frac{x_3(x_1-x_2)-\lambda
x_1(x_3-x_2)}{(x_1-x_2)-\lambda(x_3-x_2)}$$ which is the general
solution in function of the constant $\lambda$, and then a
superposition law for the Riccati equation.
\end{example}

Let us consider $\mathcal O_M$ the sheaf of holomorphic functions in
$M$, and for each $r$, the sheaf $\mathcal O_{M^r}$ of holomorphic
functions in the cartesian power $M^r$. We denote by  $\mathcal
O_{M^r}^{\vec X^r}$ the subsheaf of first integrals of the lifted
vector field $\vec X^r$ \eqref{EQlocalX} in $\mathcal O_{M^r}$. 
For the notion of \emph{regular ring} we follow the 
definition of \cite{Mun1982}.

\begin{Definition}
  Let $x$ be a point of $M$ and $\mathcal O_{M,x}$ be the ring of
germs of complex analytic functions in $x$. A subring $\mathcal
R\subset \mathcal O_{M,x}$ is a regular ring if it is
the ring of first integrals of $k$ germs in $x$ of vector fields
$\vec Y_1$,$\ldots$,$\vec Y_k$, which are $\mathbb C$-linearly independent at
$x$ and in involution: $$[\vec Y_i,\vec Y_j] = 0.$$ The
dimension $\dim \mathcal R$ is the number $\dim_{\mathbb C} M - k$.
\end{Definition}

Consider a sheaf of rings $\Psi\subset \mathcal O_{M^{r+1}}$ of
complex analytic functions in the cartesian power $M^{r+1}$. We say
that $\Psi$ is a sheaf of \emph{generically regular rings} if for a
generic point (\emph{i.e.} outside a closed analytic set)
$(\bar x,x) \in M^{r+1}$ the stalk $\Psi_{(\bar x,x)}$ is a
regular ring. A sheaf $\Psi$ of generically regular rings is the
sheaf of first integrals of a generically regular Frobenius
integrable foliation. We denote this foliation by $\mathcal
F_{\Psi}$.

\begin{Definition}\label{ATdef6}
A \emph{local} superposition law for $\vec X$ is a sheaf of rings  
$\Psi \subset \mathcal O_{M^{r+1}}$, for some
$r\in\mathbb N$, verifying:
\begin{itemize}
\item[(1)] $\Psi$ is a sheaf of generically regular rings of dimension $\geq n$.
\item[(2)] $\vec X^{r+1}\Psi = 0$, or equivalently, $\vec X^{r+1}$ is tangent
to $\mathcal F_{\Psi}$.
\item[(3)] $\mathcal F_{\Psi}$ is generically transversal to the fibers of the projection
\linebreak $M^{r+1}\to M^r$.
\end{itemize}
\end{Definition}

 This notion is found for first time in \cite{CGM2007}, in the language of
foliations. They prove that the existence of a local superposition
law of $\vec X$ is equivalent to finite dimensional
enveloping algebra, as we also are going to see.

\begin{proposition}
If $\vec X$ admits a superposition law, then it admits a local
superposition law.
\end{proposition}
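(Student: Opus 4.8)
The plan is to take the superposition law $\varphi\colon U\times M\to M$ furnished by the previous proposition (so that its parameter space is $M$ itself and, for a suitable local solution $\bar x(t)$ of $\vec X^r$ and some $t_0$, the map $x\mapsto\varphi(\bar x(t_0);x)$ is the identity) and to manufacture from its graph a sheaf $\Psi$ of first integrals on $M^{r+1}=M^r\times M$. Concretely, first I would consider the analytic map
$$\Phi\colon U\times M \longrightarrow M^r\times M,\qquad (\bar x,\lambda)\longmapsto (\bar x,\varphi(\bar x;\lambda)),$$
which is injective in the $\lambda$–direction because distinct constants give distinct solutions (this is Lie's no-functional-relation clause, part of the definition of superposition law). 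Its image is, generically, a submanifold of $M^{r+1}$ of dimension $rn+n$ foliated by the level sets of the $n$ functions "$\lambda$"; pulling those coordinate functions back through a local inverse of $\Phi$ gives, near a generic point $(\bar x,x)$, exactly $n$ independent analytic functions on $M^{r+1}$, and I let $\Psi$ be the sheaf of rings they generate (equivalently the sheaf of first integrals of the foliation $\mathcal F_\Psi$ whose leaves are the sets $\{\varphi(\bar x;\lambda)=\text{const as }\bar x\text{ varies}\}$, i.e. the images under $\Phi$ of $U\times\{\lambda\}$).

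Next I would verify the three conditions of Definition \ref{ATdef6} in turn. Condition (1): the $n$ pullback functions $\lambda_1,\dots,\lambda_n$ are $\mathbb C$-independent at a generic point and their common level sets are the leaves of $\mathcal F_\Psi$, so $\Psi_{(\bar x,x)}$ is a regular ring of dimension $(r+1)n-rn=n$; genericity is where one throws away the analytic closed set on which $\varphi$ fails to be a submersion in $\lambda$ or where $\Phi$ degenerates. Condition (3) — transversality of $\mathcal F_\Psi$ to the fibres of $M^{r+1}\to M^r$ — is precisely the statement that, for fixed generic $\bar x$, the map $\lambda\mapsto\varphi(\bar x;\lambda)$ is a local biholomorphism onto an open set of $M$; this is again built into the definition of superposition law (for each fixed admissible frame the law recovers *the* general solution, i.e. all solutions, bijectively), and at the base point it is literally the identity by the normalization from the preceding proposition. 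Condition (2), $\vec X^{r+1}\Psi=0$, is the crux: I must show each $\lambda_j$ is a first integral of the lifted field $\vec X^{r+1}=\vec X^r\oplus\vec X$ on $M^r\times M$. This follows from part (b) of the superposition-law axiom together with part (a): if $\bar x(t)$ is any solution of $\vec X^r$ in $U$ then $t\mapsto(\bar x(t),\varphi(\bar x(t);\lambda))$ is an integral curve of $\vec X^{r+1}$ for every $\lambda$, because its first $rn$ coordinates solve $\vec X^r$ by hypothesis and its last $n$ coordinates solve $\vec X$ by clause (b); since $U$ is a union of such integral curves of $\vec X^r$ (clause (a)), the image of $\Phi$ is swept out by integral curves of $\vec X^{r+1}$, hence $\vec X^{r+1}$ is tangent to every leaf of $\mathcal F_\Psi$, i.e. kills the $\lambda_j$.

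The main obstacle I anticipate is purely the bookkeeping of "generically": one has to pin down a single closed analytic subset of $M^{r+1}$ outside of which simultaneously (i) $\Phi$ is an immersion with the stated image, (ii) $\lambda\mapsto\varphi(\bar x;\lambda)$ is a local biholomorphism, and (iii) the generated ring is regular of the right dimension, and then argue that these open conditions are nonempty (which they are, since the normalization of the previous proposition exhibits a point where $\varphi(\bar x(t_0);\cdot)=\mathrm{id}$, so all three hold there). Everything else is a direct translation of the two defining clauses of a (global) superposition law into the foliation language of Definition \ref{ATdef6}; no analytic estimates are needed beyond the constant-rank/implicit-function theorem used to produce the local inverse of $\Phi$.
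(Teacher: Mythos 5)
Your proposal is correct and follows essentially the same route as the paper: both invert the superposition law with respect to the parameter variables (your map $\Phi$ and its local inverse are exactly the paper's $\lambda=\psi(\bar x,x)$), take $\Psi$ to be the sheaf generated by the resulting $n$ functions, and check regularity, invariance under $\vec X^{r+1}$, and transversality to the fibers of $M^{r+1}\to M^r$. Your verification of condition (2) via integral curves of $\vec X^{r+1}$ merely makes explicit what the paper states in one line.
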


\begin{proof}
Let us consider a superposition law for $\vec X$,
$$\varphi\colon U\times M \to M, \quad\quad U \subset M^r, \quad\quad \varphi =(\varphi_1,\ldots,\varphi_n).$$
We write the general solution,
\begin{equation}\label{ATeq20}
x(t) = \varphi(x^{(1)}(t),\ldots,x^{(r)}(t);\lambda).
\end{equation}
The local analytic dependency with respect to initial conditions
ensures that the partial jacobian $\frac{\partial (\varphi_1,\ldots
\varphi_n)}{\partial (\lambda_1,\ldots, \lambda_n)}$ does not vanish.
Therefore, at least locally, we can invert with respect to the last
variables,
$$\lambda = \psi(x^{(1)}(t),\ldots,x^{(r)}(t),x(t)).$$
From that, the components $\psi_i$ of $\psi$ are first integrals of
the lifted vector field $\vec X^{r+1}$. We consider the sheaf
of rings $\Psi$ generated by these functions $\psi_i$. This is a sheaf of
regular rings of dimension $n$. By construction $\mathcal F_{\Psi}$
is transversal to the fibers of the projection $M^r\times M \to M^r$.
We conclude that this sheaf is a local superposition law for $\vec
X$.
\end{proof}

\section{Local Superposition Theorem}

\begin{theorem}[(Local Superposition Theorem \cite{CGM2007})]
\label{C1THElocallie}
The  non-autonomous vector field $\vec X$ in $M$ admits a 
local superposition law if and only its enveloping
algebra is finite dimensional.
\end{theorem}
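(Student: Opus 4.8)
The plan is to prove the two implications separately, using the bridge between local superposition laws and Frobenius-integrable foliations that the text has already set up.

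For the easy direction, suppose the enveloping algebra $\mathfrak{g}(\vec X)$ is finite dimensional, say of dimension $s$, with basis $\vec X_1,\ldots,\vec X_s$. By the Remark following the definition of the enveloping algebra, we may write $\vec X = \partial + \sum_{i=1}^s f_i(t)\vec X_i$ with structure constants $[\vec X_i,\vec X_j] = \sum_k c_{ij}^k\vec X_k$. First I would pass to the lifted vector fields $\vec X_i^{r}$ in a cartesian power $M^r$: these again span a finite-dimensional Lie algebra (a quotient, in fact isomorphic to $\mathfrak g(\vec X)$ for $r$ large enough by a standard dimension/generic-freeness argument), and $\vec X^r = \partial + \sum_i f_i(t)\vec X_i^r$. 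The joint system $\{\vec X_i^{r+1}\}$ together with $\partial$ on $S\times M^{r+1}$ spans an involutive distribution of rank $\le s+1$; its sheaf of first integrals, restricted to a generic point of $M^{r+1}$, is the candidate $\Psi$. I would choose $r$ large enough that the diagonal action of $\mathfrak g$ on $M^r$ has generic orbits of dimension $\dim\mathfrak g$ and that the orbit through a generic $r$-frame meets each fiber of $M^{r+1}\to M^r$ transversally in a single $\mathfrak g$-orbit of dimension $n$ (this is where one uses that $\vec X$ itself has full-rank flow, so its orbits in $M$ are $n$-dimensional generically after fixing the frame). Then conditions (1)–(3) of Definition \ref{ATdef6} hold: $\Psi$ has dimension $\ge n$, $\vec X^{r+1}\Psi = 0$ because $\vec X^{r+1}$ is a $t$-dependent combination of the $\vec X_i^{r+1}$, and generic transversality to the fibers of $M^{r+1}\to M^r$ is exactly the rank condition I arranged.

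For the converse, suppose $\vec X$ admits a local superposition law $\Psi\subset\mathcal O_{M^{r+1}}$. The foliation $\mathcal F_\Psi$ is cut out near a generic point by $k\le \dim M^{r+1} - n$ commuting vector fields $\vec Y_1,\ldots,\vec Y_k$; $\vec X^{r+1}$ is tangent to $\mathcal F_\Psi$ by (2) and $\mathcal F_\Psi$ is transversal to the fibers of $\pi\colon M^{r+1}\to M^r$ by (3). The idea is that tangency of $\vec X^{r+1}$ to a foliation transversal to these fibers forces a strong constraint on how $\vec X_{t_0}$ can vary with $t_0$: fixing a generic leaf $\mathcal L$, the projection $\pi|_{\mathcal L}\colon\mathcal L\to M^r$ is a local submersion with $n$-dimensional fibers, and $\vec X^{r+1}_{t_0}$ tangent to $\mathcal L$ means $\vec X^r_{t_0}$ lifts to $\mathcal L$. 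I would exploit this to show that the $\mathbb C$-span of $\{\vec X^{r+1}_{t_0}\}_{t_0}$, modulo vectors tangent to the fibers of $\pi$, is finite dimensional — indeed it injects into the finite-dimensional space of sections of the normal bundle of a fixed leaf over a fixed point of $M^r$. Pulling this back, the span of $\{\vec X^r_{t_0}\}$ is finite dimensional in an appropriate quotient sense; a bootstrapping/closure argument under Lie bracket (the bracket of two $\vec X_{t_0}$'s also preserves $\mathcal F_\Psi$, since $\mathcal F_\Psi$ is involutive and $\vec X^{r+1}$-invariant) then yields that $\mathfrak g(\vec X)$ itself is finite dimensional.

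The main obstacle, I expect, is the bookkeeping in the converse around the passage from $M^{r+1}$ down to $M$ — making rigorous the claim that finiteness "upstairs modulo fibers" descends to genuine finiteness of $\mathfrak g(\vec X)$, and controlling the generic sets where regularity, transversality, and constant-rank hypotheses all hold simultaneously. One must check that the leaf $\mathcal L$ can be chosen so that $\pi|_{\mathcal L}$ is not merely a submersion generically but has the right fiber dimension $n$ (not less), which is where the dimension $\ge n$ hypothesis in (1) combines with transversality in (3). Since the text attributes this theorem to \cite{CGM2007} and only sketches that "we are also going to see" it, I would present this as a streamlined argument rather than a from-scratch proof, citing the foliation-theoretic details to that reference where convenient.
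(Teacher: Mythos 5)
Your forward implication (finite--dimensional enveloping algebra implies local superposition law) follows the paper's route: lift a basis $\vec Y_1,\dots,\vec Y_s$ of $\mathfrak g(\vec X)$ to $M^{r+1}$ and take $\Psi$ to be the sheaf of first integrals. But two of your supporting claims are off. First, ``$\vec X$ itself has full-rank flow, so its orbits in $M$ are $n$-dimensional generically'' is false in general (any autonomous field, or the Lorenz example in the paper, has small enveloping algebra and low-dimensional orbits); the paper instead \emph{assumes} transitivity of $\mathfrak g(\vec X)$, justified by replacing $M$ with the leaves of the orbit foliation. Second, transversality in condition (3) of Definition \ref{ATdef6} means trivial intersection of $T\mathcal F_\Psi$ with the tangent spaces to the fibers of $M^{r+1}\to M^r$, so a leaf cannot meet a fiber ``in a single $\mathfrak g$-orbit of dimension $n$''; the paper derives transversality not from orbit dimensions in $M$ but from the inductive Lemma showing that for $r$ large the lifted fields $\vec Y_1^r,\dots,\vec Y_s^r$ are $\mathcal O_{M^r}$-linearly independent, whence a combination $\sum_i F_i\vec Y_i^{r+1}$ tangent to a fiber must have all $F_i=0$.

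The converse is where the genuine gap lies. Your geometric picture is inverted: by condition (3), $\pi|_{\mathcal L}\colon\mathcal L\to M^r$ is an \emph{immersion} with discrete fibers, not a submersion with $n$-dimensional fibers. More seriously, your finiteness mechanism does not close. Tangency of all the lifted fields (and their iterated brackets) to the rank-$\le nr$ distribution of $\mathcal F_\Psi$ only bounds their span over $\mathcal O_{M^{r+1}}$; it does not bound the $\mathbb C$-dimension of the Lie algebra they generate --- compare the fields $f(x)\,\partial/\partial x$ on $\mathbb C$, which span a rank-one distribution but an infinite-dimensional Lie algebra. Likewise, evaluation at a fixed point of $M^{r+1}$ is not injective on this span, so the claimed injection into a finite-dimensional normal-bundle fiber fails without further input. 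The missing step, which is the heart of the paper's proof, is to write $[\vec Y_i^{r+1},\vec Y_j^{r+1}]=\sum_k\lambda_{ij}^k\,\vec Y_k^{r+1}$ and show the $\lambda_{ij}^k$ are \emph{constants}: because the fields are lifted, each component of the bracket in the $a$-th factor depends only on $x^{(a)}$, which yields the linear system \eqref{ATeq13} for the derivatives $\partial\lambda_{ij}^k/\partial x_\alpha$ whose coefficient matrix is built from the projections $\pi_*(\vec Y_k^{r+1})$ to $M^r$; the transversality condition (3) forces these projections to be linearly independent, so the system has only the trivial solution. Only after this does one conclude $[\vec Y_i,\vec Y_j]=c_{ij}^k\vec Y_k$ with $c_{ij}^k\in\mathbb C$ and hence finite-dimensionality of $\mathfrak g(\vec X)$. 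Your proposal has no substitute for this computation, and without it the implication does not follow.
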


  Once we understand the relation between superposition law and local 
superposition law, we see that Lie's original proof is still valid. 
Here we follow \cite{Lie1893}. However we detail some points that were not
explicitly detailed  and remain obscure in the original proof.

  It is clear that the finiteness of the enveloping algebra is not
a sufficient condition for the existence of a superposition law. It
is neccesary to integrate the enveloping algebra into a Lie group action.

For instance, any non-linear \emph{autonomous} vector field has a one dimensional
enveloping Lie algebra, but it is widely know that the knowldedge of
a number of integral curves of a generic non-linear vector field
do not lead to the general solution. Some people can argue that
this case is too degenerated because the enveloping algebra 
of an autonomous vector field is too small in dimension. 
However, we can construct of a differential equation in
dimension 4, having an enveloping Lie algebra of dimension 4
for which no superposition Law is expectable. 

\begin{example}\label{exampleLorentz} Let us consider the direct product
of the well known Lorentz system and a Riccati equation
\begin{eqnarray*}\label{LR}
\dot x &=& \sigma(y-x)\\ 
\dot y &=& x(\rho-z) - y \\
\dot z &=& xy -\beta z \\
\dot u &=& \frac{2u}{t} - \frac{2}{t^2} - u^2
\end{eqnarray*}
being $\sigma,\, \beta,$ and $\rho$ parameters. We can apply Lie test
and we arrive to de conclusion that the enveloping algebra is of 
dimension 4 and have the following system of generators. 
\begin{eqnarray*}
\vec{X_1} &=&  \sigma(y-x)\frac{\partial}{\partial_x} + (x(\rho-z)-y)\frac{\partial}{\partial y} + (xy-\beta z)\frac{\partial}{\partial z} \\
\vec{X_2} &=& u^2 \frac{\partial}{\partial u}\\
\vec{X_3} &=& u\frac{\partial}{\partial u}\\
\vec{X_4} &=& \frac{\partial}{\partial u}
\end{eqnarray*}
Our original vector field is $\vec X = \vec X_1 - \vec X_2 + \frac{2}{t}\vec X_3 - \frac{2}{t^2}\vec X_4$. Let us
assume that this system admits a superposition Law,
$$\varphi\colon (\mathbb C^4)^r\times \mathbb C^4 \to \mathbb C^4.$$
The solutions of the Riccati equation in (\ref{LR}) can be easily found,
and they turn to be of the form $u(t) = \frac{1 + 2at}{t(1+at)}$ with $a$ constant. Therefore,
the solutions of (\ref{LR}) are of the form $\left(x(t),y(t),z(t),\frac{1 + 2at}{t(1+at)}\right)$
where $(x(t),y(t),z(t))$ is a solution of the Lorentz system. By taking
suitable values for $a_1,\ldots,a_r$, we can put the corresponding solutions 
of the Riccati equation inside the superposition law, project $\mathbb C^4$ onto
$\mathbb C^3$, and we will get a map $\bar\varphi$ of the same kind that $\varphi$,
$$\bar \varphi\colon (\mathbb C^3)^r \times \mathbb C^4 \to \mathbb C^3, \quad (x_i,y_i,z_i,\lambda) 
\mapsto \pi_{x,y,z}\left(\varphi \left(x_i, y_i, z_i, \frac{1+2a_it}{t(1+a_it)}\right)\right)$$
that gives us the general solution of the Lorentz system in function of
$r$ known solutions and $4$ arbitrary constants. Of course, the existence
of such a map for the Lorenz atractor equations is not expectable at all. 
\end{example}

\subsubsection*{\begin{center} Local Superposition Law Implies Finite Dimensional \\
Enveloping Algebra \end{center}}\label{ATLocalSuperpositionimpliesLVG}

First, let us assume that $\vec X$ admits a local superposition law
$\Psi$ in $\mathcal O_{M^{r+1}}$. Let us consider the cartesian
power $\vec X^{r+1}$ as a non-autonomous vector field in $M^{r+1}$.
Any section $\psi$ of $\Psi$ is a first integral of $\vec X^{r+1}$.

 Consider the family of vector fields
$\{\vec X^{r+1}_{t}\}_{t\in S}\subset \mathfrak X(M^{r+1})$. Let us
take a maximal subfamily  $\{\vec X^{r+1}_{t}\}_{t\in \Lambda}$ of
$\mathcal O_{M^{r+1}}$-linearly independent vector fields; here,
$\Lambda$ is some subset of $S$. The cardinal of a set $\mathcal
O_{M^{r+1}}$-linearly independent vector fields is bounded by the
dimension of $M^{r+1}$. Hence,
  $$\Lambda = \{t_1,\ldots,t_m\},$$
is a finite subset. We denote the corresponding vector fields as
follows:
  $$\vec X^{r+1}_i = \vec X^{r+1}_{t_i} \quad i=1,\ldots,m.$$

 We consider the following notation; in the cartesian power $M^{r+1}$ we
denote the different copies of $M$ as follows:
  $$M^{r+1} = M^{(1)} \times \ldots \times M^{(r)} \times M.$$
We recall that the \emph{lifted} vector field $\vec X^{r+1}_i$ is
the sum,
  $$\vec X_i^{r+1} = \vec X_i^{(1)} + \ldots + \vec X_i^{(r)} + \vec X_i,$$
of $r+1$ copies of the same vector field acting in the different
copies of $M$.

 The sheaf $\Psi$ consists of first integrals of the fields $\vec X^{r+1}_i$.
Thus, for all $i,j$, the Lie bracket $[\vec X_i^{r+1},\vec
X_j^{r+1}]$ also annihilates the sheaf $\Psi$. We can write the Lie
bracket as a sum of $r+1$ copies of the same vector field in $M$.
Let us note that the Lie bracket of two lifted vector fields is
again a lifted vector field.  
$$[\vec X_i^{r+1},\vec X_j^{r+1}] = [\vec X_i^{(1)},\vec X_j^{(1)}]
+ \ldots + [\vec X_i^{(r)},\vec X_j^{(r)}] + [\vec X_i,\vec X_j].$$

We recall that a regular distribution of vector fields is a distribution 
of constant rank, and a regular distribution is said Frobenius integrable if
it is closed by Lie brackets. 

The $m$ vector fields $\vec X^{r+1}_i$ span a distribution which is 
generically regular of rank $m$.
However, in the general case, it is not Frobenius integrable. We add
all the feasible Lie brackets, obtaining an infinite family
$$ \mathcal X = \bigcup_{k=0}^\infty \mathcal X_k$$
where, $$\mathcal X_0 = \{\vec X^{r+1}_1,\ldots,\vec X^{r+1}_m\},$$
and, $$\mathcal X_{k} = \{[\vec Y, \vec Z]\,\, | \,\,\vec Y \in
\mathcal X_i, \vec Z \in \mathcal X_j \,\,\mbox{for}\,\,  i < k
\,\,\mbox{and}\,\,j < k\}.$$
 The family $\mathcal X$ is a set of \emph{lifted} vector fields.
They span a generically regular distribution in $M^{r+1}$ which is,
by construction, Frobenius integrable. We extract of this family a
maximal subset of $\mathcal O_{M^{r+1}}$-linearly independent vector
fields,
  $$\{\vec Y^{r+1}_1,\ldots, \vec Y^{r+1}_s\}.$$
They span the same generically regular Frobenius integrable
distribution of rank $s\geq m$. Without any lose of generality we
can assume that the $m$ vector fields $\vec X^{r+1}_i$ are within
this family. These vector fields $\vec Y^{r+1}_i$ annihilate the
sheaf $\Psi$. Thus, we also obtain \emph{the Lie inequality},
\begin{equation}\label{ATeq5}
s \leq nr
\end{equation}
because the distribution of vector fields annihilating $\Psi$ has
rank $nr + n - \dim(\Psi)$, and by hypothesis $\dim(\Psi)$ is
greater than $n$.

The vector fields $\vec Y_i^{r+1}$ in $M^{r+1}$ are \emph{lifted}.
We write them as a sum of different copies of a vector field in $M$.
\begin{equation}\label{ATeq6}
\vec Y^{r+1}_i = \vec Y_i^{(1)} + \ldots + \vec Y_i^{(r)} + \vec
Y_i.
\end{equation}
They span an integrable distribution, so that there exist analytic
functions $\lambda_{ij}^k$ in $M^{r+1}$ such that,
\begin{equation}\label{ATeq7}
[\vec Y^{r+1}_i, \vec Y^{r+1}_j ] = \sum_{k=1}^s
\lambda_{ij}^k(x^{(1)},\ldots,x^{(r)},x) \vec Y^{r+1}_k,
\end{equation}

Let us prove that these functions $\lambda_{ij}^k$ are, in fact,
constants. From \eqref{ATeq6} we have
$$[\vec Y^{r+1}_i, \vec Y^{r+1}_j ] = [\vec Y^{(1)}_i, \vec Y^{(1)}_j] + \ldots
+ [\vec Y^{(r)}_i, \vec Y^{(r)}_j]+ [\vec Y_i, \vec Y_j]$$ and then
substituting again \eqref{ATeq6} and \eqref{ATeq7} we obtain for all
$a = 1,\ldots, r$
\begin{equation}\label{ATeq8}
[\vec Y^{(a)}_i, \vec Y^{(a)}_j] = \sum_{k=1}^s
\lambda_{ij}^k(x^{(1)},\ldots,x^{(r)},x) \vec Y^{(a)}_k
\end{equation}
and
\begin{equation}\label{ATeq9}
 [ \vec Y_i , \vec Y_j]  = \sum_{k=1}^s
\lambda_{ij}^k(x^{(1)},\ldots,x^{(r)},x) \vec Y_k
\end{equation}
These vector fields act exclusively in their respective copies of
$M$. Then $\sum_{k=1}^s \lambda_{ij}^k \vec Y_k$ is a vector field in
$M$, and $\sum_{k=1}^s \lambda_{ij}^k \vec Y^{(a)}_k$ is  also a
vector field in $M^{(a)}$, the $a$-th component in the cartesian
power $M^{r+1}$. There is an expression in local coordinates for the
vector fields $\vec Y_i$,
$$\vec Y_i = \sum_{l=1}^n \xi_{il}(x)\frac{\partial}{\partial x_l},$$
and
$$\vec Y_i^{(a)} = \sum_{l=1}^n \xi_{il}(x^{(a)})\frac{\partial}{\partial x_l}.$$
Thus,
$$\sum_{k=1}^s \lambda_{ij}^k(x^{(1)},\ldots,x^{(r)},x) \vec Y_k = \sum_{k=1}^s\sum_{l=1}^n
\lambda_{ij}^k(x^{(1)},\ldots,x^{(r)},x)\xi_{kl}(x)\frac{\partial}{\partial
x_l}$$ is a vector field in $M$, and for each $a$,
$$\sum_{k=1}^s \lambda_{ij}^k(x^{(1)},\ldots,x^{(r)},x) \vec Y^{(a)}_k = \sum_{k=1}^s\sum_{l=1}^n
\lambda_{ij}^k(x^{(1)},\ldots,x^{(r)},x)\xi_{kl}(x^{(a)})\frac{\partial}{\partial
x_l^{(a)}}$$ is a vector field in $M^{(a)}$. Therefore, the
coefficients of $[\vec Y_i,\vec Y_j]$,
\begin{equation}\label{ATeq10}
\sum_{k=1}^s \lambda_{ij}^k(x^{(1)},\ldots,x^{(r)},x) \xi_{kl}(x);
\end{equation}
depend only on $x_1,\ldots,x_n$; and analogously for each $a$
varying from $1$ to $r$ the coefficients of $[\vec Y^{(a)}_i,\vec
Y^{(a)}_j]$,
\begin{equation}\label{ATeq11}
\sum_{k=1}^s
\lambda_{ij}^k(x^{(1)},\ldots,x^{(r)},x)\xi_{kl}(x^{(a)})
\end{equation}
depend only on the coordinate functions
$x_1^{(a)},\ldots,x_n^{(a)}$. Fix $1\leq \alpha\leq n$. Let us prove
that $\frac{\partial \lambda_{ij}^k}{\partial x_\alpha} = 0$ for all
$i=1,\ldots, s$, $j=1,\ldots, s$ and $k=1,\ldots, s$. The same
argument is valid for $\frac{\partial \lambda_{ij}^k}{\partial
x^\beta_\alpha}$, just interchanging the factors of the cartesian
power $M^{r+1}$.

The expressions \eqref{ATeq11} do not depend on $x_{\alpha}$, and
then their partial derivative vanish,
\begin{equation}\label{ATeq12}
\sum_{k=1}^s \frac{\partial \lambda_{ij}^k}{\partial
x_{\alpha}}\xi_{kl}(x^{(a)}) = 0.
\end{equation}
We can write these expressions together in matrix form,
\begin{equation}\label{ATeq13}
\left(\begin{matrix}
\xi_{11}(x^{(1)}) & \xi_{21}(x^{(1)}) & \ldots & \xi_{s1}(x^{(1)})\\
\xi_{12}(x^{(1)}) & \xi_{22}(x^{(1)}) & \ldots & \xi_{s2}(x^{(1)})\\
\vdots & \vdots & \ddots & \vdots \\
\xi_{1n}(x^{(1)}) &\xi_{2n}(x^{(1)}) & \ldots & \xi_{sn}(x^{(1)})\\
\xi_{11}(x^{(2)}) & \xi_{21}(x^{(2)}) & \ldots & \xi_{s1}(x^{(2)})\\
\vdots & \vdots & & \vdots \\
\vdots & \vdots & & \vdots \\
\xi_{1n}(x^{(r)}) &\xi_{2n}(x^{(r)}) & \ldots & \xi_{sn}(x^{(r)})
\end{matrix}\right)
\left(\begin{matrix} \frac{\partial \lambda^1_{ij}}{\partial
x_{\alpha}} \\ \vdots 
\\ \vdots \\ \vdots \\ \vdots \\ \frac{\partial \lambda^s_{ij}}{\partial
x_{\alpha}}
\end{matrix}\right) = \left(\begin{matrix} 0 \\ \vdots \\ \vdots  \\ \vdots \\ \vdots \\ \vdots \\ 0 \end{matrix}
\right);
\end{equation}
for all $i = 1,\ldots, s$ and $j = 1,\ldots, s$. Then, the vector
$\left(\frac{\partial \lambda_{ij}^1}{\partial x_\alpha},\ldots,
\frac{\partial \lambda_{ij}^s}{\partial x_{\alpha}}\right)$ is a
solution of a linear system of $n\cdot r$ equations. If we prove
the matrix of coefficients above is of maximal rank, then from Lie's
inequality \eqref{ATeq5} we know that this system has only a trivial
solution and $\frac{\partial \lambda_{ij}^k}{\partial
x_{\alpha}}=0$.

\medskip
In order to do that, let us consider the natural projections,
$$\pi\colon  M^{(1)}\times \ldots \times M^{(r)} \times M
\to M^{(1)} \times \ldots \times M^{(r)},$$ onto the first $r$
factors and
$$\pi_1\colon M^{(1)}\times \ldots \times M^{(r)} \times M \to M,$$
onto the last factor.

Vector fields $\vec Y_k^{r+1}$ are projectable by $\pi$, and their
projection is
$$\pi_*(\vec Y_k^{r+1}) = \vec Y_k^{(1)} + \ldots +\vec Y_k^{(r)},$$
The expression of $\pi_*(\vec Y_k^{r+1})$ in local coordinates is
precisely,
$$\pi_*(\vec Y^{r+1}_k) = \sum_{l=1}^n \sum_{m=1}^r \xi_{kl}(x^{(m)}) \frac{\partial}{\partial
x^{(m)}_l}$$ the $k$-th column of the matrix of coefficients of
\eqref{ATeq13}. We can state that $\frac{\partial \lambda_{ij}^k}{\partial x_\alpha}$ 
vanish if and only if the vector fields $\pi_*(\vec Y_1^{r+1}), \ldots, \pi_*(\vec
Y_s^{r+1})$ are generically linearly independents. Assume that there is a
non-trivial linear combination equal to zero,
$$\sum_{k=1}^{s} F_k(x^{(1)},\ldots x^{(r)})\pi_*(\vec Y_k^{r+1}) =
\sum_{k=1}^s\sum_{m=1}^r F_k(x^{(1)},\ldots x^{(r)})Y_k^{(m)} = 0$$
the fields $\vec Y^{r+1}_k$ are generically linearly independent,
and then the vector field
$$\vec Z = \sum_{k=1}^s F_k(x^{(1)},\ldots x^{(r)})\vec Y_k^{r+1} = \sum_{k=1}^s F_k(x^{(1)},\ldots x^{(r)})\vec
Y_k$$ is different from zero. The vector field $\vec Z$ annihilates
simultaneously the sheaves $\Psi$ and $\pi_1^*(\mathcal O_M)$. It
annihilates $\Psi$ because it is a linear combination of the $\vec
Y_{k}^{r+1}$. Moreover it annihilates $\pi_1^{*}(\mathcal O_M)$, because
it is a linear combination of the $\vec Y_k$. This is in contradiction
with the transversality condition of Definition \ref{ATdef6}. 

We have proven that the $\pi_*(\vec Y^{r+1}_n), \ldots \pi_*(\vec
Y^{r+1}_n)$ are linearly independent, and then that the partial
derivatives $\frac{\partial \lambda_{ij}^k}{\partial x_\alpha}$
vanish. If we fix a superindex $(a)$ between $1$ and $r$, the same
argument is valid for the partial derivatives $\frac{\partial
\lambda_{ij}^k}{\partial x^{(a)}_{\alpha}}$. Hence, the
$\lambda_{ij}^k$ are constants $c_{ij}^k \in \mathbb C$,
\begin{equation}
[\vec Y_i, \vec Y_j] = c_{ij}^k\vec Y_k,
\end{equation}
and the vector fields $\vec Y_1$,$\ldots$,$\vec Y_s$ span a
$s$-dimensional Lie algebra in $M$, the enveloping algebra
of $\vec X$.

\subsubsection*{Finite Dimensional Enveloping Algebra Implies Local Superposition}

Reciprocally, let us assume that $\vec X$ has a finite dimensional
enveloping algebra. Throughout this section we assume that
this enveloping algebra acts \emph{transitively} in $M$. It
means that for a generic point $x$ of $M$ the values of elements of
$\mathfrak g(\vec X)$ at $x$ span the whole tangent space:
$$\mathfrak g(\vec X)_x = T_x M.$$

This assumption of transitivity can be done in the local case
without any lose of generality. If the $\mathfrak g(\vec X)$ is not
transitive, it gives a non-trivial foliation of $M$. We can then
substitute the integral leaves of the foliation for the phase space
$M$.

\begin{lemma}
  Let us consider $\vec Y_1, \ldots, \vec Y_m$ vector fields in $M$,
$\mathbb C$-linearly independent but generating a distribution which
is generically of rank less than $m$. Then there is a natural number
$r$ such that the lifted vector fields $\vec Y_1^{r},\quad, \vec
Y_m^{r}$ generate a generically regular distribution of rank $m$ of
vector fields of $M^r$.
\end{lemma}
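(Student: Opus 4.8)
The plan is to reduce the statement to a finiteness property for intersections of linear subspaces of $\mathbb{C}^m$. First I would pass to a dense open set $U\subseteq M$ on which the distribution $\langle \vec Y_1,\ldots,\vec Y_m\rangle$ has constant rank $\rho\leq m-1$, and for each $x\in U$ introduce the subspace
$$K_x=\Big\{(c_1,\ldots,c_m)\in\mathbb{C}^m\ \Big|\ \sum_{i=1}^m c_i\vec Y_i(x)=0\Big\}\subseteq\mathbb{C}^m,$$
which has dimension $m-\rho>0$ and varies analytically with $x$. The key observation is that for a point $p=(x^{(1)},\ldots,x^{(r)})\in M^r$ with every $x^{(a)}\in U$, a $\mathbb{C}$-linear combination $\sum_i c_i\vec Y_i^{\,r}$ vanishes at $p$ precisely when $\sum_i c_i\vec Y_i(x^{(a)})=0$ for all $a$, i.e. precisely when $(c_1,\ldots,c_m)\in\bigcap_{a=1}^r K_{x^{(a)}}$. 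Hence it suffices to find $x^{(1)},\ldots,x^{(r)}\in U$ with $\bigcap_a K_{x^{(a)}}=0$: at such a $p$ the vectors $\vec Y_1^{\,r},\ldots,\vec Y_m^{\,r}$ are $\mathbb{C}$-linearly independent, so some $m\times m$ minor of their coefficient matrix is a nonzero analytic function on $M^r$, and therefore the lifted distribution has rank $m$ off a proper analytic subset, i.e. it is generically regular of rank $m$.

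Next I would prove the existence of such points by the standard minimality argument. Among all finite intersections $K_{x^{(1)}}\cap\cdots\cap K_{x^{(r)}}$ with $x^{(a)}\in U$, choose one, say $W_0$, of minimal dimension (possible since all of these are subspaces of the fixed finite-dimensional space $\mathbb{C}^m$). For any $y\in U$ the subspace $W_0\cap K_y$ is again a finite intersection of this type and is contained in $W_0$, so by minimality $W_0\cap K_y=W_0$, that is $W_0\subseteq K_y$. Consequently $W_0\subseteq\bigcap_{y\in U}K_y$.

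It then remains to check that $\bigcap_{y\in U}K_y=0$; this closes the argument, because it forces $W_0=0$, and the finitely many points defining $W_0$ provide the required $r$ and the point $p$. Indeed, if $(c_1,\ldots,c_m)$ lies in $K_y$ for every $y\in U$, then the vector field $\sum_i c_i\vec Y_i$ vanishes on the dense subset $U$, hence vanishes identically by analyticity; since $\vec Y_1,\ldots,\vec Y_m$ are $\mathbb{C}$-linearly independent, this forces $(c_1,\ldots,c_m)=0$.

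The argument is essentially elementary, and the only point that needs a little care is the reduction in the first paragraph: one must observe that linear independence of the $\vec Y_i^{\,r}$ at a \emph{single} point of $M^r$ already yields generic regularity of rank $m$, which follows from the usual lower semicontinuity of rank — the common vanishing locus of the relevant $m\times m$ minors is a proper analytic subset as soon as one minor is nonzero somewhere. I would also remark that the hypothesis that $\langle\vec Y_1,\ldots,\vec Y_m\rangle$ has generic rank $<m$ is not actually used in the proof: were the generic rank already $m$, one would simply take $r=1$.
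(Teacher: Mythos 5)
Your proof is correct, but it takes a genuinely different route from the paper's. The paper argues by induction on $m$: having lifted $\vec Y_1,\dots,\vec Y_m$ to a generically independent family on some $N=M^r$, it supposes the extra field satisfies $\vec Y^r=\sum_i f_i\,\vec Y_i^r$ with analytic coefficients on $N$, passes to $N^2$, and equates the two expressions for the doubled lift to get $g_i(x^{(1)},x^{(2)})=f_i(x^{(1)})=f_i(x^{(2)})$, forcing the $f_i$ to be constants and contradicting $\mathbb C$-linear independence; the cartesian power is doubled at each step of the induction. You replace this with a single linear-algebra argument: encoding the pointwise relations as subspaces $K_x\subseteq\mathbb C^m$, noting that a relation among the lifts at $(x^{(1)},\dots,x^{(r)})$ is exactly an element of $\bigcap_a K_{x^{(a)}}$, and using the minimal-dimension trick together with $\bigcap_{y\in U}K_y=0$ (which is precisely the $\mathbb C$-linear independence hypothesis) to produce a point where the lifts are independent. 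This is more elementary, avoids induction entirely, and implicitly yields the sharper bound $r\le m$ (one can choose each new factor to strictly decrease $\dim\bigcap_a K_{x^{(a)}}$), whereas the paper's doubling gives only exponential control on $r$; the paper's version, on the other hand, is stylistically consistent with the ``$f(x^{(1)})=f(x^{(2)})$ implies constant'' device used elsewhere in the same section. Both proofs share the final step --- independence at a single point implies generic independence --- and both tacitly require $M$ (hence $M^r$) to be connected so that a nonvanishing $m\times m$ minor makes the degeneracy locus a \emph{proper} closed analytic subset; you flag this correctly. Your closing remark that the hypothesis of generic rank $<m$ is not needed is also accurate.
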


\begin{proof}
We use an induction argument on $m$. The initial case is true,
because a non null vector field generate a distribution which is
generically regular of rank 1. Let us assume that the lemma is
proven for the case of $m$ vector fields. Consider $m+1$ vector
fields  $\vec Y_1,\ldots, \vec Y_m, \vec Y$. We can substitute some
cartesian power $M^r$ for $M$ and apply the induction hypothesis on
$\vec Y_1,\ldots, \vec Y_m$. Thus, we can assume that $Y_1^r,\ldots, Y_m^r$
span a generically regular distribution of vector fields in $M^ r$.
From now on, we denote $N$ for the cartesian power $M^r$, and $\vec Z_i$
for the lifted vector field $\vec Y_i^r$.

If the distribution spanned by $\vec Z_1,\ldots, \vec Z_m, \vec Z$
is not of rank $m+1$, then $\vec Z$ can be written as a linear combination
of the others with coefficients functions in $N$,
$$\vec Z = \sum_{i=1}^m f_i(x)\vec Z_i.$$
Consider $N^2 = N^{(1)} \times N^{(2)}$. Let us prove that the
distribution generated by $\vec Z^2_1,\ldots, \vec Z^2_m, \vec Z^2$
is generically regular of rank $m+1$. Using \emph{reductio ad
absurdum} let us assume that the rank is less that $m+1$. Then $Z^2$
is a linear combination of $\vec Z^2_1,\ldots, \vec Z^2_m$ with
coefficients functions in $N^2$,
\begin{equation}\label{ATeq14}
\vec Z^2 = \sum_{i=1}^m g_i(x^{(1)},x^{(2)})(\vec Z^{(1)}_i + Z^{(2)}_i),
\end{equation}
 on the other hand,
\begin{equation}\label{ATeq15}
\vec Z^2 = \vec Z^{(1)} + \vec Z^{(2)} = \sum_{i=1}^m f_i(x^{(1)})
\vec Z^{(1)}_i + f_i(x^{(2)}) \vec Y^{(2)}_i.
\end{equation}
Equating \eqref{ATeq14} and \eqref{ATeq15} we obtain,
$$g_i(x^{(1)},x^{(2)}) = f_i(x^{(1)}) = f_i(x^{(2)}), \quad i=1,\ldots,m.$$
Hence, the functions $g_i$ are constants $c_i\in \mathbb C$ but in
such case,
$$\vec Z = \sum_{k=1}^m c_i\vec Z_i,$$
$\vec Z$ is $\mathbb C$-linear combination of $\vec Z_1,\ldots \vec
Z_m$, in contradiction with the hypothesis of the lemma of $\mathbb
C$-linear independence of these vector fields. Then, the rank of the
distribution spanned by $\vec Y_1^{2r}$,$\ldots$,$\vec Y^{2r}_m$,$\vec
Y^{2r}$ is $m+1$.
\end{proof}

Let us take a basis $\{\vec Y_1,\ldots \vec Y_s\}$ of $\mathfrak
g(\vec X)$. The previous lemma says that there exist $r$ such that
the distribution generated by $\vec Y^r_1,\ldots,\vec Y^r_s$ is
generically regular of rank $s$ in $M^r$, the dimension of $M^r$ is
$nr$ so that we have again $s\leq nr$. We take one additional factor
in the cartesian power, and hence we consider the complex analytic
manifold $M^{r+1}$. We define $\Psi$ as the sheaf of first integrals
of the lifted vector fields $\vec Y^{r+1}_1, \ldots, Y^{r+1}_s$ in
$M^{r+1}$. These vector fields span a generically Frobenius
integrable distribution of rank $s$, and then $\Psi$ is a sheaf of
regular rings of dimension $n + nr - s$, which is greater or equal to 
$n$.

Let us see that the foliation $\mathcal F_{\Psi}$ is generically
transversal to the fibers of the projection $\pi\colon M^{r+1}\to
M^r$. By the \emph{transitivity} hypothesis on
$\mathfrak g(\vec X)$, the tangent space to these fibers is spanned
by the $s$ vector fields $\vec Y_k$, where 
$k$ varies from $1$ to $s$. Let $\vec Z$ be a
vector field tangent to the fibers of $\pi$. If this vector field
is tangent to $\mathcal F_{\Psi}$ then it is a linear combination of
the $\vec Y^{r+1}_i$ with coefficients in $\mathcal O_{M^{r+1}}$,
$$\vec Z = \sum_{i=1}^s F_i(x^{(1)},\ldots,x^{(r)},x) \vec Y_i^{r+1} =
\sum_{i=1}^{s} F_i Y_{i}^{r} + \sum_{i=1}^s F_iY_i,$$
and from that we obtain that,
\begin{eqnarray}
\sum_{i=1}^s F_i(x^{(1)},\ldots,x^{(r)},x)\vec  Y^r_i &=& 0, \label{ATeq17}\\
\sum_{i=1}^s F_i(x^{(1)},\ldots,x^{(r)},x)\vec Y_i^{r+1} &=&
\sum_{i=1}^s F_i(x^{(1)},\ldots,x^{(r)},x)\vec Y_i \label{ATeq18}
\end{eqnarray}
where $\vec Y^r_i =\vec Y^{(1)}_i + \ldots + \vec Y^{(r)}_i$. The
vector fields $\vec Y_1^r$,$\ldots$,$\vec Y^r_s$ span a distribution
which is generically of rank $s$ by construction. And, if considered
as vector fields in $M^{r+1}$, they do not depend of the last factor
in the cartesian power. Then we can specialize the functions $F_i$
to some fixed value $x\in M$ in the last component, for which the
linear combination \eqref{ATeq17} is not trivial. We obtain an
expression,
\begin{equation}\label{ATeq19}
\sum_{i=1}^s G_i(x^{(1)},\ldots,x^{(r)})\vec  Y_i = 0,
\end{equation}
that gives us is a linear combination of the $\vec Y_i$ with
coefficients functions if $M^r$. Vector fields $\vec Y_i$ are
$\mathbb C$-linearly independent; therefore this linear combination
\eqref{ATeq19} is trivial and the functions $G_i$ vanish. These
functions are the restriction of the functions $F_i$ to arbitrary
values in the last factor of $M^{r+1}$, then the functions $F_i$
also vanish. We conclude that the vector field $\vec Z$ is zero, and
$\mathcal F_{\Psi}$ intersect transversally the fibers of the projection
$\pi$.

\section{Actions of complex analytic Lie groups}

\subsection{Complex Analytic Lie Groups}

\begin{Definition}
 A complex analytic Lie Group $G$ is a group endowed with an structure
of complex analytic manifold. This structure is compatible with the
group law in the sense of that the map:
$$G\times G \to G, \quad (\sigma,\tau)\mapsto \sigma\tau^{-1}$$
is complex analytic.
\end{Definition}

  From now on, let us consider a complex analytic Lie group $G$.
 We say that a subset $H$ is a complex analytic subgroup of $G$ if
it is both a subgroup and a complex analytic submanifold of $G$. In
the usual topology, the closure $\bar H$ of a subgroup of $G$ is
just a Lie subgroup, and it is not in general complex analytic.
Instead of $\bar H$ we consider the \emph{complex analytic
closure},
$$\tilde H = \bigcap_{H\subset G'} G'\qquad  G' \mbox{ complex analytic.}$$
This is the smaller complex analytic subgroup containing $H$.

\subsubsection*{Invariant Vector Fields}

  Each element $\sigma \in G$ defines two biholomorphisms of $G$ as
  complex analytic manifold. They are the right translation $R_{\sigma}$ and
  the left translation $L_{\sigma}$:
  $$R_{\sigma}(\tau) = \tau\sigma, \quad L_{\sigma}(\tau) =
  \sigma\tau.$$

  The following result is true in the general theory of groups. It
is quite interesting that right translations are the symmetries of
left translations and viceversa. This simple fact has amazing
consequences in the theory of differential equations.

\begin{remark}
  Let $f\colon G\to G$ be a map. Then, there is an element $\sigma$
  such that $f$ is the right translation $R_\sigma$ if and only if
  $f$ commutes with all left translations.
\end{remark}

  Let us consider $\mathfrak X(G)$ the space of all regular
complex analytic vector fields in $G$. It is, in general, an
infinite dimensional Lie algebra over $\mathbb C$ -- with the Lie
bracket of derivations --. A biholomorphism $f$ of $G$ transform
regular vector field into regular vector fields. By abuse on notation
we denote by the same symbol $f$ the automorphism of the Lie algebra
of regular vector fields.
$$f\colon \mathfrak X(G) \to \mathfrak X(G),\qquad
\vec X \mapsto f(\vec X) \quad f(\vec X)_\sigma = f'(\vec
X_{f^{-1}(\sigma)})$$

\begin{Definition}
  A regular vector field $\vec A\in\mathfrak X(G)$ is called a right
invariant vector field if for all $\sigma\in G$ the right
translation $R_{\sigma}$ transform $\vec A$ into itself. A regular
vector field $\vec A\in \mathfrak X(G)$ is called a left invariant
vector field if for all $\sigma\in G$ the right translation
$L_{\sigma}$ transforms $\vec A$ into itself.
\end{Definition}

  For any tangent vector $\vec A_e\in T_eG$ in the identity element
of $G$ there is an only right invariant vector field $\vec A$ which
takes the value $\vec A_e$ at $e$. Then the space of right
invariants vector fields is a $\mathbb C$-vector space of the same
dimension as $G$. The Lie bracket of right invariant vector fields
is a right invariant vector field. \emph{We denote by $\mathcal
R(G)$ the Lie algebra of right invariants vector fields in $G$.} The
same discussion holds for left invariants vector fields. \emph{We
denote by $\mathcal L(G)$ the Lie algebra of left invariants vector
fields in $G$.}

\medskip

  Note that the inversion morphism that sends $\sigma$ to
$\sigma^{-1}$ is an involution that conjugates right translations
with left translations. Then, it sends right invariant vector fields
to left invariant vector fields. Thus, $\mathcal R(G)$ and $\mathcal L(G)$ 
are isomorphic Lie algebras.

\subsubsection*{Exponential Map}

  Let us consider $\vec A\in \mathcal R(G)$. There is a germ of
solution curve of $\vec A$ that sends the origin to the identity
element $e\in G$. By the composition law, it extend to a globally
defined curve:
$$\sigma\colon \mathbb C \to G, \quad t\mapsto \sigma_{t}.$$
This map is a group morphism and $\{\sigma_{t}\}_{t\in\mathbb C}$ is
a monopoarametric subgroup of $G$.

\begin{Definition}
  We call exponential map to the application that assigns to each
  right invariant vector field $\vec A$ the element $\sigma_1$ of
  the solution curve $\sigma_t$ of $\vec A$:
  $$\exp\colon \mathcal R(G)\to G, \quad \vec A\mapsto \exp(\vec
  A)=\sigma_1.$$
\end{Definition}

  The exponential map is defined in the same way for left invariant
vector fields. If $\vec A\in \mathcal R(G)$ and $\vec B\in \mathcal
L(G)$ take the same value at $e$, then they share the same solution
curve $\{\sigma_{t}\}_{t\in\mathbb C}$, and $\exp(t\vec A) =
\exp(t\vec B)$.

\medskip

  If $\vec A$ is a right invariant vector field, then the exponential of
$\vec A$ codifies the flow of $\vec A$ as vector field. This flow is
given by the formula:
$$\Phi^{\vec A}\colon \mathbb C\times G \to G,\quad \Phi^{\vec A}\colon (t,\sigma)\mapsto
\exp(t\vec A)\cdot \sigma.$$
  The same is valid for left invariant vector fields. If $\vec B$ is
a left invariant vector field then the flow of $\vec B$ is:
$$\Phi^{\vec B}\colon \mathbb C\times G \to G, \quad \Phi^{\vec
B}\colon (t,\sigma)\mapsto \sigma\cdot \exp(t\vec B).$$ 
Finally we see that \emph{right invariant vector fields are infinitesimal
generators of monoparametric groups of left translations}.
Reciprocally left invariant vector fields are infinitesimal
generators of monoparametric groups of right translations. The following
statement is the infinitesimal version of the commutation of right and left
translations:

\begin{remark}\label{THLeftRight}
  Let $\vec X$ be a vector field in $G$. It is a right invariant vector field if and only if for
all left invariant vector field $\vec B$ in $G$ the Lie bracket,
$[\vec B, \vec X]$, vanish. 
\end{remark}

The same is true if we change the roles of right and left  in the
previous statement. We have seen that the Lie algebra of symmetries
of right invariant vector fields is the algebra of left invariant
vector fields and viceversa:
$$[\mathcal R(G),\mathcal L(G)] = 0.$$

\subsection{Complex Analytic Homogeneous Spaces}

\begin{Definition}
A $G$-space $M$ is a complex analytic manifold $M$ endowed with a
complex analytic action of $G$,
$$G\times M \xrightarrow{a} M, \quad (\sigma,x)\mapsto \sigma\cdot
x.$$
\end{Definition}

  Let $M$ be a $G$-space. For a point $x\in M$ we denote by $H_x$
the \emph{isotropy subgroup} of elements of $G$ that let $x$ fixed, 
and $O_x$ for the \emph{orbit} $G\cdot x$ of $x$. Let us remind that an action
is said to be \emph{transitive} if there is a unique orbit; it is said \emph{free} if
for any point $x$ the isotropy $H_x$ consist of the identity element $e\in G$. The kernel
$K$ of the action is the intersection of all isotropy subgroups $\cap_{x\in M}H_x$. It is
a normal subgroup of $G$. The action of $G$ in $M$ is said to be \emph{faithful} 
if its kernel consist only of the identity element.    

\begin{Definition}
A $G$-space $M$ is a homogeneous $G$-space if the action of $G$ in
$M$ is transitive. A principal homogeneous $G$-space $M$ is a 
$G$-space such that the action of $G$ in $M$ is free and transitive.
\end{Definition}

\medskip 

By a morphism of $G$-spaces we mean a morphism $f$ of complex
analytic manifolds which commutes with the action of $G$,
$f(\sigma\cdot x) = \sigma\dot f(x)$. Whenever it does not lead to 
confusion we write homogeneous space instead of
$G$-homogeneous space on so on.

\medskip
  Let $M$ be a $G$-space. For any $x$ in $M$ the natural map,
$$G \to O_x\quad  \sigma \mapsto \sigma\cdot x$$
induces an isomorphism of $O_x$ with the homogeneous space of cosets
$G/H_x$; it follows that any homogeneous space is isomorphic to 
a quotient of $G$.

\medskip

 A point $x\in M$ is called a \emph{principal point} if and
only if $H_x = \{e\}$ if and only if $O_x$ is a principal
homogeneous space. In such case we say that $O_x$ is a
\emph{principal orbit}.

\medskip

\begin{Definition}\label{ApABAnalyticQuotient}
  For a $G$-space $M$ we denote by $M/G$ the space 
orbits of the action of $G$ in $M$.
\end{Definition}

The main problem of the \emph{invariant theory} is the study of the
structure of such quotient spaces. In general this space of orbits
$M/G$ can have a very complicated structure, and then it should not
exist within the category of complex analytic manifolds. This
situation is even more complicated in the algebraic case.

\subsubsection*{Fundamental Vector Fields}\label{fundamental}

  Let $M$ be a $G$-space. An element $\vec A$ of the Lie algebra $\mathcal
  R(G)$ can be considered as a vector field in the cartesian product
  $G\times M$ that acts in the first component only. This vector
  field $\vec A$ is projectable by the action,
  $$a\colon G\times M \to M,$$
  and its projection in $M$ is a vector field that we denote $\vec
  A^M$, the \emph{fundamental vector field} in $M$ induced by $\vec
  A$. Namely, for any $x\in M$ we have:
  $$(\vec A^M)_x = \frac{d}{dt} (\exp({t\vec A})\cdot x).$$

\begin{proposition}
  The map that assign fundamental vector fields to right invariant vector fields, 
  $$\mathcal R(G) \to \mathfrak X(M), \quad \vec A \to \vec A^M,$$
  is a Lie algebra morphism.
\end{proposition}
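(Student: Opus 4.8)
The plan is to verify directly that the assignment $\vec A \mapsto \vec A^M$ respects both the linear structure and the Lie bracket, working at the level of the cartesian product $G \times M$ where the computation is transparent. First I would recall the setup: an element $\vec A \in \mathcal R(G)$ is regarded as a vector field on $G \times M$ acting only on the first factor, hence it is $a$-projectable onto $\vec A^M \in \mathfrak X(M)$ by the action map $a\colon G \times M \to M$. Linearity over $\mathbb C$ is immediate, since the operation of lifting to $G \times M$ and then pushing forward by $a$ is $\mathbb C$-linear on vector fields that are projectable. So the content of the proposition is the bracket identity $[\vec A, \vec B]^M = [\vec A^M, \vec B^M]$ for $\vec A, \vec B \in \mathcal R(G)$.

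The key step is the general fact that if two vector fields $\vec V_1, \vec V_2$ on a manifold $N$ are $\pi$-projectable along a submersion $\pi\colon N \to N'$, with projections $\vec W_1, \vec W_2$, then $[\vec V_1, \vec V_2]$ is also $\pi$-projectable and its projection is $[\vec W_1, \vec W_2]$. I would state this as a lemma (it follows by testing both sides against pullbacks $\pi^* f$ of functions $f$ on $N'$: for a projectable field $\vec V$ with projection $\vec W$ one has $\vec V(\pi^* f) = \pi^*(\vec W f)$, and then $[\vec V_1,\vec V_2](\pi^* f) = \vec V_1 \pi^*(\vec W_2 f) - \vec V_2 \pi^*(\vec W_1 f) = \pi^*([\vec W_1,\vec W_2] f)$). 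Applying this to $N = G \times M$, $\pi = a$, with $\vec V_i$ the lifts of $\vec A, \vec B$ acting on the $G$-factor: the lift of $[\vec A, \vec B]$ (bracket computed in $\mathcal R(G)$, equivalently in $\mathfrak X(G)$) is exactly the bracket of the lifts, because brackets of vector fields supported on the first factor are again supported there and agree with the bracket computed on $G$ alone. Hence $[\vec A,\vec B]^M = [\vec A^M, \vec B^M]$, which is what we want.

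The one point that deserves care — and the only place I expect any friction — is checking that a right-invariant vector field $\vec A$, viewed on $G \times M$ as acting on the first factor, is genuinely $a$-projectable, i.e.\ that the value of $\vec A^M$ at a point $x = \sigma \cdot y$ does not depend on the representative $(\sigma, y)$. This is where right-invariance (as opposed to arbitrary vector fields on $G$) is used: since $\mathcal R(G)$ consists of infinitesimal generators of left translations, the flow of the lifted $\vec A$ on $G \times M$ is $(\sigma, y) \mapsto (\exp(t\vec A)\sigma, y)$, which is intertwined by $a$ with the flow $x \mapsto \exp(t\vec A)\cdot x$ on $M$ — this is precisely the formula $(\vec A^M)_x = \frac{d}{dt}\big|_{0}(\exp(t\vec A)\cdot x)$ already recorded before the proposition, and it shows the projected field is well defined and smooth. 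Once projectability is secured, the bracket computation above closes the argument; everything else is routine.
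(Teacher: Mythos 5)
Your proof is correct; note that the paper itself states this proposition without proof, and your argument is exactly the one its setup suggests: regard $\vec A\in\mathcal R(G)$ as a field on $G\times M$ acting on the first factor, use right-invariance to see it is projectable by the action map $a$ (with projected flow $x\mapsto\exp(t\vec A)\cdot x$), and invoke the standard fact that brackets of $a$-projectable fields project to brackets. The only point worth making explicit is that the bracket of two lifts supported on the $G$-factor is the lift of their bracket computed in $\mathfrak X(G)$, which on right-invariant fields is the bracket of $\mathcal R(G)$; this is why the map is a morphism rather than an anti-morphism, and you handle it correctly.
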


  The kernel of this map is the Lie algebra of the kernel subgroup
  of the action, \emph{i.e.} $\cap_{x\in M}H_x$. Thus, if the action is faithful then it is
  injective; its image is isomorphic to the quotient $\mathcal
  R(G)/\mathcal R\left(\cap_{x\in M}H_x\right)$.

\begin{Definition}\label{ApBfundamentalvectorfields}
  We denote by $\mathcal R(G,M)$ the Lie algebra of
  fundamental vector fields of the action of $G$ on $M$.
\end{Definition}

  The flow of fundamental vector fields is given by the exponential map in
the group $G$:
$$\Phi^{\vec A^M}\colon \mathbb C \times M \to M, \quad (t,x)\mapsto
\exp(t\vec A)\cdot x.$$

\subsubsection*{Basis of an Homogeneous Space}

  Here we introduce the concept of \emph{basis} and \emph{rank} of
homogeneous spaces. This concept has been implicitely used by S. Lie and E. Vessiot in
their research on superposition laws for differential equations. However,
we have not found any modern reference on this point. From now on, let us consider 
an homogeneous space $M$.

\begin{Definition}
  Let $S$ be any subset of $M$. We denote by $H_S$ to the isotropy
  subgroup of $S$:
  $$H_S = \bigcap_{x\in S} H_x = \{\sigma\,|\,\sigma\cdot x = x\,\,\, \forall x\in S\}.$$
\end{Definition}

\begin{Definition}
  We call $\langle S \rangle$, space spanned by $S\subset M$, to the
  space of invariants of $H_S$:
  $$\langle S \rangle = \{x\, | \, \sigma\cdot x = x \,\,\, \forall \sigma\in H_S
  \}.$$
\end{Definition}

\begin{Definition}
  A subset $S\subset M$ is called a system of generators of $M$ as
  $G$-space if $\langle S \rangle = M$.
\end{Definition}

  $S$ is a system of generators if and only if the isotropy $H_S$ is
  coincides with the kernel of the action $H_M$,
  $$\bigcap_{x\in S} H_x = \bigcap_{x\in M} H_x.$$
  In particular, for a faithful action, 
$S$ is a system of generators of $M$ if an only if $H_S = \{e\}$.

\begin{Definition}\label{ApBbasis}
We call a \emph{basis} of $M$ to a minimal system of generators of
$M$. We say that an homogeneous space is of finite rank if there
exist a finite basis of $M$. The minimum cardinal of basis of $M$ is
called the rank of $M$.
\end{Definition}

  Let us consider an homogeneous space $M$. The group $G$ acts in
  each cartesian power $M^r$ component by component, so $M^r$ is a
  $G$-space. The following proposition characterizing basis of $M$ is
  elemental:

\begin{proposition}\label{pro5.16}
  Let us assume that $M$ is a faithful homogeneous space of finite
  rank $r$. Let us consider $\bar x = (x^{(1)},\ldots,x^{(r)}) \in M^r$. The
  following statements are equivalent:
\begin{enumerate}
\item[(1)] $\{x^{(1)},\ldots,x^{(r)}\}$ is a basis of $M$.
\item[(2)] $\bar x$ is a principal point of $M^r$.
\item[(3)] $H_{\bar x} = \{e\}$.
\end{enumerate}
\end{proposition}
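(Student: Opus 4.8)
The plan is to show the chain of equivalences $(1)\Leftrightarrow(3)$ and $(3)\Leftrightarrow(2)$, using only the definitions of basis, principal point, and isotropy subgroup, together with the fact that the $G$-action on $M$ is faithful. Note that $H_{\bar x}$, the isotropy subgroup of the point $\bar x\in M^r$ for the diagonal action, coincides by definition with the isotropy subgroup $H_{\{x^{(1)},\ldots,x^{(r)}\}}$ of the \emph{set} $\{x^{(1)},\ldots,x^{(r)}\}\subset M$, since $\sigma\cdot\bar x=\bar x$ if and only if $\sigma\cdot x^{(i)}=x^{(i)}$ for all $i$. This identification is the bridge between the $M^r$-language of (2),(3) and the $M$-language of (1).

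First I would prove $(1)\Leftrightarrow(3)$. By the remark preceding Definition \ref{ApBbasis}, a subset $S\subset M$ is a system of generators if and only if $H_S$ equals the kernel $H_M$ of the action; since the action is faithful, $H_M=\{e\}$, so $S=\{x^{(1)},\ldots,x^{(r)}\}$ is a system of generators precisely when $H_{\bar x}=H_S=\{e\}$. It remains to note that if $S$ is a system of generators of cardinality $r$ and $M$ has rank $r$ (the minimum cardinal of a basis), then $S$ is automatically \emph{minimal}: no proper subset $S'\subsetneq S$ has fewer than $r$ elements that still generate, because that would contradict minimality of the rank. Hence $S$ is a basis. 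Conversely a basis of $M$ is in particular a system of generators, so $H_{\bar x}=\{e\}$. This gives $(1)\Leftrightarrow(3)$.

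Next I would prove $(2)\Leftrightarrow(3)$, which is essentially the definition of principal point applied in $M^r$: a point $\bar x\in M^r$ is a principal point of the $G$-space $M^r$ if and only if its isotropy subgroup $H_{\bar x}$ is trivial, by the discussion following Definition \ref{ApABAnalyticQuotient} (a point is principal iff its isotropy is $\{e\}$ iff its orbit is a principal homogeneous space). So this equivalence is immediate once one has observed that the diagonal $G$-action makes $M^r$ a $G$-space, which was recorded just before the statement of the proposition. Combining the two equivalences yields $(1)\Leftrightarrow(2)\Leftrightarrow(3)$.

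I do not expect a serious obstacle here; the proposition is described in the paper as ``elemental.'' The only point requiring a moment's care is the minimality clause in $(1)$: one must use the hypothesis that $r$ is \emph{the} rank of $M$ (the minimum cardinal of a basis) to upgrade ``system of generators of cardinality $r$'' to ``basis.'' Without the finite-rank-equals-$r$ hypothesis the implication $(3)\Rightarrow(1)$ could fail, since a generating set of size $r$ need not be minimal in general. Everything else is unwinding definitions and the faithfulness reduction $H_M=\{e\}$.
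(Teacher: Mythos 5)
Your proof is correct and complete: the identification $H_{\bar x}=H_{\{x^{(1)},\ldots,x^{(r)}\}}$, the faithfulness reduction $H_M=\{e\}$, the definition of principal point for the diagonal action on $M^r$, and the rank-$r$ hypothesis to upgrade ``generating set of cardinality $r$'' to ``minimal generating set'' are exactly the ingredients needed. The paper itself states the proposition without proof (calling it ``elemental''), and your definitional unwinding is the intended argument.
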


\begin{corollary}[(Lie's inequality)]
  If $M$ is a faithful homogeneous space of finite rank $r$ then,
  $$r\cdot \dim M \geq \dim G.$$
\end{corollary}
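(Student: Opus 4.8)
The plan is to use Proposition \ref{pro5.16} to translate the combinatorial statement about the rank into a statement about a principal point, and then count dimensions via the orbit map. First I would invoke the hypothesis that $M$ is a faithful homogeneous space of finite rank $r$, so that by Proposition \ref{pro5.16} there exists a point $\bar x = (x^{(1)},\ldots,x^{(r)}) \in M^r$ which is a principal point of the $G$-space $M^r$; equivalently, its isotropy subgroup $H_{\bar x}$ is trivial. This means the orbit $O_{\bar x} = G\cdot \bar x \subset M^r$ is a principal homogeneous $G$-space.

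Next I would apply the general fact, recorded in the excerpt, that the orbit map $\sigma \mapsto \sigma\cdot \bar x$ induces an isomorphism $G/H_{\bar x} \xrightarrow{\sim} O_{\bar x}$. Since $H_{\bar x} = \{e\}$, this gives a biholomorphism $G \xrightarrow{\sim} O_{\bar x}$, so in particular
$$\dim G = \dim O_{\bar x}.$$
Now $O_{\bar x}$ is a complex analytic submanifold (the image of the orbit map) of the cartesian power $M^r$, so $\dim O_{\bar x} \leq \dim M^r = r\cdot \dim M$. Chaining the two relations yields $\dim G \leq r\cdot\dim M$, which is the desired inequality.

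The only point requiring a little care — and the main obstacle — is the assertion $\dim O_{\bar x} \leq \dim M^r$, i.e. that the orbit genuinely sits inside $M^r$ as a subset whose dimension cannot exceed that of the ambient manifold. This is immediate if one knows the orbit is an immersed (or embedded) submanifold, which is standard for analytic Lie group actions: the orbit map $G \to M^r$ has constant rank, hence its image is an immersed submanifold of dimension equal to $\dim G - \dim H_{\bar x} = \dim G$. One should also note that $\dim G$ is finite here: this follows because the orbit map embeds (a neighborhood of $e$ in) $G$ into the finite-dimensional manifold $M^r$, so $G$ itself is finite-dimensional. With these remarks the chain of (in)equalities $\dim G = \dim O_{\bar x} \leq \dim M^r = r\cdot\dim M$ completes the proof. $\square$
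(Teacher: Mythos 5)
Your proposal is correct and follows the same route as the paper: both arguments produce a principal orbit $O_{\bar x}\subset M^r$ isomorphic to $G$ (via the triviality of $H_{\bar x}$ guaranteed by Proposition \ref{pro5.16}) and then compare $\dim G = \dim O_{\bar x}$ with $\dim M^r = r\cdot\dim M$. Your version merely spells out the submanifold/constant-rank justification that the paper leaves implicit.
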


\begin{proof}
  The dimension of $M^r$ is $r\cdot\dim M$; but $M^r$ contains principal
orbits $O_{\bar x}$ which are isomorphic to $G$, and then of the same dimension. 
\end{proof}

\begin{proposition}\label{ApBsetofbasis}
 Let $M$ be a faithful homogeneous space of finite rank $r$. The subset
$B\subset M^r$ of all pricipal points of $M^r$ is an analytic open subset.  
\end{proposition}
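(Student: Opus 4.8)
The plan is to characterize the set $B$ of principal points of $M^r$ as the complement of the locus where the isotropy subgroup is strictly larger than $\{e\}$, and then to show this larger locus is closed analytic. By Proposition \ref{pro5.16}, a point $\bar x = (x^{(1)},\ldots,x^{(r)})\in M^r$ lies in $B$ if and only if $H_{\bar x}=\{e\}$, if and only if $\{x^{(1)},\ldots,x^{(r)}\}$ is a basis of $M$. Since $M$ has finite rank $r$, such points exist, so $B\neq\emptyset$. The key observation is that $\bar x\in B$ precisely when the orbit $O_{\bar x}\subset M^r$ is a principal orbit, which (since $O_{\bar x}\cong G/H_{\bar x}$) is equivalent to $\dim O_{\bar x}=\dim G$, that is, to the orbit map $G\to M^r$, $\sigma\mapsto\sigma\cdot\bar x$ being an immersion at the identity.

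First I would translate the immersion condition into the language of fundamental vector fields. Fix a basis $\vec A_1,\ldots,\vec A_d$ of $\mathcal R(G)$, $d=\dim G$, and let $\vec A_i^{M^r}$ be the corresponding fundamental vector fields on $M^r$ (Definition \ref{ApBfundamentalvectorfields}). Since the action on $M$ is faithful, the action on $M^r$ is faithful as well, so the map $\mathcal R(G)\to\mathfrak X(M^r)$ is injective and the $\vec A_i^{M^r}$ are $\mathbb C$-linearly independent vector fields. At a point $\bar x$, the tangent space to the orbit $O_{\bar x}$ is spanned by the values $(\vec A_1^{M^r})_{\bar x},\ldots,(\vec A_d^{M^r})_{\bar x}$, so $\bar x$ is a principal point if and only if these $d$ vectors are linearly independent in $T_{\bar x}M^r$. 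In local coordinates on $M^r$, writing $\vec A_i^{M^r}=\sum_l a_{il}(\bar x)\frac{\partial}{\partial y_l}$, this is the condition that the $d\times(nr)$ matrix $(a_{il}(\bar x))$ has maximal rank $d$ (note $d\le nr$ by Lie's inequality). Hence $M^r\setminus B$ is, locally, the common zero locus of all $d\times d$ minors of this holomorphic matrix; each minor is a holomorphic function, so $M^r\setminus B$ is a closed analytic subset and $B$ is open.

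The one point requiring a little care — and the main obstacle — is that the description of $M^r\setminus B$ as a minor-vanishing locus is a priori only local (it depends on a choice of coordinate chart and, implicitly, on trivializing the fundamental vector fields), whereas analyticity of a set is a local property, so this is in fact enough: $B$ is open because being of maximal rank is an open condition on each chart, and these open conditions glue. I would also remark that $B$ is dense: by Proposition \ref{pro5.16} and the finite rank hypothesis $B$ is nonempty, and the complement of a proper analytic subset of a connected manifold is connected and dense; but density is not claimed in the statement, so I would include it only as a passing remark. Finally, I should double-check the identification "principal point $\iff$ orbit map is an immersion at $e$": the isotropy $H_{\bar x}$ is a complex analytic subgroup with Lie algebra $\{\vec A\in\mathcal R(G) : (\vec A^{M^r})_{\bar x}=0\}$, so $H_{\bar x}=\{e\}$ forces this Lie algebra to be zero, i.e. the $d$ values to be independent; conversely independence of the values forces $\dim H_{\bar x}=0$, and combined with Proposition \ref{pro5.16}(3) for the full statement — actually for openness we only need the dimension-zero conclusion, which already gives that the maximal-rank locus is contained in $B$, and the reverse inclusion is immediate. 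This closes the argument.
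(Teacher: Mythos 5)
There is a genuine gap. What your argument actually proves is that the set $B_0\subset M^r$ of points at which the fundamental vector fields are linearly independent is open (this is the first half of the paper's proof, via the vanishing of the rank-$d$ minors). But your identification of $B$ with this maximal-rank locus is false in one direction: linear independence of $(\vec A_1^{M^r})_{\bar x},\ldots,(\vec A_d^{M^r})_{\bar x}$ is equivalent to the Lie algebra of $H_{\bar x}$ being zero, i.e.\ to $H_{\bar x}$ being \emph{discrete}, not to $H_{\bar x}=\{e\}$. You notice this yourself ("independence of the values forces $\dim H_{\bar x}=0$") but then dismiss it with the sentence "the maximal-rank locus is contained in $B$", which has the inclusion backwards: the correct and immediate inclusion is $B\subseteq B_0$, and it can be strict. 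For a concrete counterexample to $B_0\subseteq B$, take $G=PGL_2(\mathbb C)$ and $H$ the order-two subgroup generated by $z\mapsto -z$; then $M=G/H$ is a faithful homogeneous space of rank $2$, every point of $M^2$ has finite isotropy so that $B_0=M^2$, yet the diagonal points $(x,x)$ have isotropy of order two and hence do not lie in $B$.

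The missing ingredient, which is exactly the second half of the paper's proof, is an argument that $B$ is open \emph{inside} $B_0$: the paper observes that the function $\bar x\mapsto \#H_{\bar x}$ is upper semi-continuous on $B_0$, so the locus where it attains its minimum value $1$ (nonempty because principal points exist by Proposition \ref{pro5.16} and the finite-rank hypothesis) is an analytic open subset, and that locus is precisely $B$. Without this step, or some substitute for it, your proof only shows openness of the larger set of points with discrete isotropy.
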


\begin{proof} 
Consider $\{\vec A_1,\ldots,
\vec A_s\}$ a basis of the Lie algebra $\mathcal R(G,M^r)$ of
fundamental fields in $M^r$. For each $\bar x\in M^r$, those vector
fields span the tangent space to $O_{\bar x}$,
$$\langle \vec A_{1,\bar x}, \ldots, \vec A_{s,\bar x}\rangle =
T_{\bar x}O_{\bar x}\subset T_{\bar x} M^r.$$ 
By Proposition \ref{pro5.16} there is a principal point $\bar y\in M^r$. The dimension of $O_{\bar y}$ is
$s$, so that $\vec A_{1,\bar y}$, $\ldots$, $\vec A_{s,\bar y}$ are
linearly independent. The set
$$B_0 = \{\bar x\in M^r\,|\, \vec A_{1,\bar x}, \ldots, \vec A_{s,\bar x}
\mbox{ are linearly independent }\},$$
is the complementary of the analytic closed subset defined by
the minors of rank $s$ of the matrix formed by the vectors $\vec A_i$. It
is non-void, because it contains $\bar y$. Thus, $B_0$ is a non-void
\emph{analytic open subset} of $M$.
 
  By definition, a $r$-frame $\bar x$ is in $B_0$ if and
only if its orbit $O_{\bar x}$ is of dimension $s$, if and
only if the isotropy subgroup of $\bar x$ is a discrete subgroup
of $G$; therefore $B\subset B_0 \subset M^r$.

\medskip

The function in $B_0$ that assigns to each $\bar x$ the cardinal $\#
H_{\bar x}$ of its isotropy subgroup is upper semi-continuous; thus,
it reach its minimum $1$, along an analytic open subset $B\subset
B_0$. This $B$ is the \emph{set of all principal points} of $M^r$,
it is an analytic open subset of $B_0$ and then also of $M^r$.
\end{proof}

\subsection{Pretransitive actions and Lie-Vessiot systems}

 From now on, let us consider a complex analytic Lie
group $G$, and a \emph{faithful} analytic action of $G$ on $M$,
$$G\times M \to M, \quad (\sigma, x) \to \sigma \cdot x.$$
$\mathcal R(G)$ is the Lie algebra of \emph{right-invariant} vector
fields in $G$. We denote $\mathcal R(G,M)$ the Lie algebra of
fundamental vector fields of the action of $G$ on $M$ 
(Definition \ref{ApBfundamentalvectorfields}).

For each $r\in\mathbb N$, $G$ acts in the cartesian power $M^r$
component by component. There is a minimum $r$ such that there are
principal orbits in $M^r$. If $M$ is an homogeneous space then this
number $r$ is the \emph{rank} of $M$ (Definition \ref{ApBbasis}).

\begin{Definition}\label{C1DEFpretransitive}
We say that the action of $G$ on $M$ is pretransitive if there
exists $r\geq 1$ and an analytic open subset $U\subset M^r$ such
that:
\begin{enumerate}
\item[(a)] $U$ is union of principal orbits.
\item[(b)] The space of orbits $U/G$ is a complex analytic manifold.
\end{enumerate}
\end{Definition}

\begin{proposition}\label{C1PROpretransitive}
If $M$ is a $G$-homogeneous space of finite rank, then the action of
$G$ in $M$ is pretransitive.
\end{proposition}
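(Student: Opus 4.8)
The plan is to take $r$ to be the rank of $M$ and exhibit an open subset $U \subset M^r$ satisfying the two requirements of Definition \ref{C1DEFpretransitive}. For (a), the natural candidate for $U$ is the set $B$ of all principal points of $M^r$, which by Proposition \ref{ApBsetofbasis} is a non-void analytic open subset of $M^r$, and which is by its very definition a union of principal orbits. So the content lies entirely in establishing (b): that the orbit space $B/G$ carries the structure of a complex analytic manifold.

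For (b) I would argue as follows. Fix a basis $\{\vec A_1,\dots,\vec A_s\}$ of $\mathcal R(G,M^r)$ as in the proof of Proposition \ref{ApBsetofbasis}; on $B$ these $s$ fundamental vector fields are linearly independent and span, at each point $\bar x \in B$, the tangent space $T_{\bar x} O_{\bar x}$ to the (principal, hence $s$-dimensional, hence locally closed) orbit through $\bar x$. Thus $B$ is foliated by the orbits of $G$, all of the same dimension $s = \dim G$, and the foliation is exactly the one integrating the involutive distribution spanned by the $\vec A_i$. The orbit space $B/G$ is then the leaf space of this regular foliation, and the task is to show that one can choose $B$ (shrinking if necessary to a $G$-invariant open subset) so that this leaf space is Hausdorff and carries a manifold structure. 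Concretely: around a point $\bar x_0 \in B$ pick a local analytic transversal $T$ of dimension $nr - s$ meeting $O_{\bar x_0}$ transversally in one point; the saturation $G\cdot T$ is open, $G$-invariant, and the projection $G\cdot T \to T$ (well-defined after possibly shrinking $T$ so that each orbit meets it in at most one point, using that the isotropy is trivial) gives a chart for the quotient. Gluing these charts over a suitable $G$-invariant open $U \subseteq B$ produces the analytic manifold structure on $U/G$.

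The main obstacle I anticipate is the global patching: a priori an orbit may return to meet a given transversal more than once, so the naive quotient need not be Hausdorff or even a manifold on all of $B$. The resolution is that pretransitivity only asks for \emph{some} open $U$ of principal orbits with good quotient, so it suffices to produce one such $U$ — e.g. by taking a flow-box neighbourhood of a single transversal slice and saturating it, or by invoking that the leaf space of a regular analytic foliation is an analytic manifold on a suitable open saturated piece. The key facts doing the work are: principal orbits have trivial isotropy (so $G \to O_{\bar x}$ is a biholomorphism and orbits are embedded), $B$ is open and non-empty (Proposition \ref{ApBsetofbasis}), and Lie's inequality guarantees $s = \dim G \le nr$ so the transversal has the expected dimension $nr - s$. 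Assembling these, the two conditions of Definition \ref{C1DEFpretransitive} hold, and the action is pretransitive.
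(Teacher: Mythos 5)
Your overall strategy coincides with the paper's: take $U=B$, the set of principal points of $M^r$ (so that (a) is immediate from Proposition \ref{ApBsetofbasis}), and build charts for the quotient out of local transversals to the orbits. But there is a genuine gap at the one step that carries all the weight. You assert that the transversal $T$ can be shrunk ``so that each orbit meets it in at most one point, using that the isotropy is trivial'', and offer as back-up either saturating a flow-box or invoking that the leaf space of a regular analytic foliation is a manifold on some saturated open piece. None of these closes the gap. Freeness alone does not prevent an orbit from recurring into an arbitrarily small transversal: the irrational flow on a torus is a free action of a one-dimensional group for which every transversal, however small, is met infinitely often by every orbit, and no saturated open subset has a manifold leaf space; for the same reason the general statement about leaf spaces of regular foliations is false. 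Saturating a flow-box does not help either, since the saturation $G\cdot T$ is much larger than the flow-box, so an orbit can leave it and return to hit $T$ a second time.

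The paper closes exactly this gap with an argument that exploits the cartesian-power structure of $M^r$, not merely freeness. Supposing every neighbourhood of $\bar x$ in the transversal $L$ meets $O_{\bar x}$ in a second point, one builds a sequence $\bar x_i\to\bar x$ in $L\cap O_{\bar x}$ with $\bar x_i\neq\bar x$, and the (unique, by freeness) elements $\sigma_i$ with $\sigma_i\cdot\bar x_i=\bar x$. Componentwise, $\sigma_i$ lies in $\bigcap_{k}H_{x^{(k)}_i,x^{(k)}}$, an intersection of right translates of the \emph{closed} isotropy subgroups $H_{x^{(k)}}$ of the individual coordinates of $\bar x$ in $M$; as $\bar x_i\to\bar x$ these translates tend to the subgroups themselves, and since $\bigcap_k H_{x^{(k)}}=H_{\bar x}=\{e\}$ one concludes $\sigma_i\to e$. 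A neighbourhood $U_e$ of the identity with $U_e\cdot\bar x\cap L=\{\bar x\}$ then forces $\sigma_i=e$ for large $i$, contradicting $\bar x_i\neq\bar x$. You would need to supply this (or an equivalent slice argument using the component isotropies) for your proof to be complete.
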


\begin{proof}
  Let $r$ be the rank of $M$. The set $B\subset M^r$ of \emph{principal points} 
of $M^r$ is an analytic open subset (Proposition
\ref{ApBsetofbasis}). Let us see that the space of orbits $B/G$
is a complex analytic manifold. In order to do so we will construct
an atlas for $B/G$. Let us consider the natural projection:
  $$\pi\colon B \to B/G, \quad \bar x\mapsto \pi(\bar x) = O_{\bar x}.$$
Let us consider $\bar x\in B$; let us construct a coordinate open
subset for $\pi(\bar x)$. $O_{\bar x}$ is a submanifold of $B$. We
take a submanifold $L$ of $B$ such that $O_{\bar x}$ and $L$
intersect transversally in $\bar x$,
$$T_{\bar x}U = T_{\bar x} O_{\bar x} \oplus T_{\bar x} L.$$

\emph{Let us prove that there is an polydisc $U_{\bar x}$, open in
$L$ and centered on $\bar x$, such that $O_{\bar x} \cap U_{\bar x}
= \bar x$.} The action of $G$ on $B$ is continuous and free. So
that, if the required statement statement holds, then there exists a
maybe smaller polydisc $V_{\bar x}$ centered in $\bar x$ and open in
$L$ such that for all $\bar y\in V_{\bar x}$ the intersection of
$O_{\bar y}$ with $V_{\bar x}$ is reduced to the only point $\bar
y$. Thus, $V_{\bar x}$ projects one-to-one by $\pi$.
$$\pi\colon V_{\bar x} \xrightarrow{\sim} V_{\pi(\bar x)} \qquad \bar
y \mapsto \pi(\bar y) = O_{\bar y}$$ $V_{\pi(\bar x)}$ is an open
neighborhood of $\pi(x)$. The inverse of $\pi$ is an homeomorphism
of such open subset of $B/G$ with a complex polydisc. In this way we
obtain an open covering of the space of orbits. Transitions
functions are the holonomy maps of the foliation whose leaves are
the orbits. Thus, transition functions are complex analytic and
$B/G$ is a complex analytic manifold.

\medskip

Reasoning by \emph{reductio ad absurdum} let us assume that there is
not a polydisc verifying the required hypothesis. Each neighborhood
of $x$ in $L$ intersects then $O_x$ in more than one points. Let us
take a topological basis,
$$U_1 \supset U_2 \supset \ldots \supset \ldots,$$
of open neighborhoods of $\bar x$ in $L$.
$$\bigcap_{i=1}^\infty U_i = \{\bar x\}, \quad U_i\cap O_{\bar x} = \{\bar x,
\bar x_i,\ldots \}.$$
 In this way we construct a sequence $\{\bar x_i\}_{i\in \mathbb N}$
in $L\cap O_{\bar x}$ such that,
$$\lim_{i\to\infty}\bar x_i = \bar x,$$
where $\bar x_i$ is different from $\bar x$ for all $i\in \mathbb
N$. The action of $G$ is free, so that, for each $i\in\mathbb N$
there is an unique $\sigma_i\in G$ such that $\sigma_i(\bar x_i) =
\bar x$. We write these $r$-frames in components,
$$\bar x = (x^{(1)},\ldots,x^{(r)}), \quad \bar x_i
=(x^{(1)}_i,\ldots,x^{(r)}_i),$$ we have that for all $i\in\mathbb
N$ and $k=1,\ldots, r,$
$$x^{(k)} = \sigma_i(x^{(k)}_i).$$
For all $i\in \mathbb N$ and $k=1,\ldots,r,$ let us denote, 
$$H_{x^{(k)}_i,x^{(k)}}=\{\sigma\in G\,|\,\sigma \cdot x^{(k)}_i = x^{(k)}\},$$
the set of all elements of $G$ sending $x^{(k)}_i$ to $x^{(k)}$.
This set is the image of the isotropy group $H_{x^{(k)}}$ by a right
translation in $G$. When $x^{(k)}_i$ is closer to $x^{(k)}$ this
translation is closer to the identity. We have,
$$\sigma_i \subset  \bigcap_{k=1}^r H_{x^{(k)}_i,x^{(k)}},$$
and from that,
$$\lim_{i\to\infty} \sigma_i \in \bigcap_{k=1}^r H_{x^{(k)}},$$
but this intersection in precisely the isotropy group of $\bar x$
and then $\sigma_i \xrightarrow{i\to\infty} e$.

On the other hand there is a neighborhood of $\bar x$ in $O_{\bar
x}$ that intersects $L$ only in $\bar x$. So that there is a
neighborhood $U_{e}$ of the identity in $G$ such that $U_{e}\cdot
\bar x$ intersects $L$ only in $\bar x$. From certain $i_0$ on,
$\sigma_i$ is in $U_{e}$, and then $\sigma_i = e$. Then $\bar
x_{i} = \bar x$, in contradiction with the hypothesis.
\end{proof}

\begin{remark}
  Pretransitive actions are not uncommon. For example, if $M$ and $N$ are
$G$ homogeneous spaces of finite rank, then $M\times N$ is not in general 
an homogeneous space, but it is clear that it is a pretransitive space. Also the
linear case provides useful examples. For a vector space $E$ it is clear that 
the action of the linear group $GL(E)$ on any tensor construction on $E$ is 
pretransitive.
\end{remark}

\begin{Definition}
  A non-autonomous analytic vector field $\vec X$ in $M$ is called a 
\emph{Lie-Vessiot system}, relative to the action of $G$, if its 
enveloping algebra is spanned by 
fundamental fields of the action of $G$ on $M$; 
\emph{i.e.} $\mathfrak g(\vec X)\subset \mathcal R(G,M)$.
\end{Definition}

\section{Global Superposition Theorem}\label{SectionGlobal}

In this section we prove our first main result. As mentioned,
it is inspired in Vessiot \cite{Vessiot1894} philosophy of
extracting the group action from the superposition law. 

\begin{theorem}\label{T1}
A non-autonomous complex analytic vector field $\vec X$
in a complex analytic manifold $M$ admits a superposition law if and only if
it is a Lie-Vessiot system related to a pretransitive Lie group action
in $M$.
\end{theorem}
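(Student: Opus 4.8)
The plan is to prove the two implications separately, using the machinery already assembled in the excerpt. For the \emph{sufficiency} direction, suppose $\vec X$ is a Lie-Vessiot system relative to a pretransitive action of $G$ on $M$, so that $\mathfrak g(\vec X)\subset\mathcal R(G,M)$. Pretransitivity gives $r\geq 1$ and an analytic open $U\subset M^r$ which is a union of principal orbits and whose quotient $U/G$ is a complex analytic manifold. The first step is to observe that $\vec X^r$ is tangent to $U$: since each $\vec X_{t_0}$ is a fundamental field, its lift $\vec X_{t_0}^r$ is a fundamental field of the diagonal $G$-action on $M^r$, hence tangent to every orbit, and in particular tangent to $U$ (condition (a) of the superposition law). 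Next I would build the superposition map. Fix a principal point $\bar x_0\in U$; its orbit is isomorphic to $G$ via $\sigma\mapsto\sigma\cdot\bar x_0$, so for every $\bar x$ in that orbit there is a unique $g(\bar x)\in G$ with $g(\bar x)\cdot\bar x_0=\bar x$, depending analytically on $\bar x$. Then define
$$\varphi\colon U\times M\to M,\qquad \varphi(\bar x;\lambda)=g(\bar x)^{-1}\cdot\lambda$$
on the orbit-saturation of a suitable neighbourhood, extended over $U/G$ by the analytic structure of the quotient. One checks that if $\bar x(t)$ solves $\vec X^r$ then $g(\bar x(t))$ solves the corresponding \emph{automorphic equation} $\vec A$ in $G$ (right-invariant, because $\vec X_{t_0}$ generates left translations), and hence $x_\lambda(t)=g(\bar x(t))^{-1}\cdot\lambda$ solves $\vec X$; conversely every solution of $\vec X$ arises this way by choosing $\lambda$ equal to the value at the base time. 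The non-degeneracy and the "no functional relations among the $\lambda_i$" condition follow because the action on each principal orbit is free and transitive, so distinct $\lambda$ give distinct solutions. This yields a genuine superposition law in the classical sense.

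For the \emph{necessity} direction, suppose $\vec X$ admits a superposition law. By the earlier propositions we may take it of the form $\varphi\colon U\times M\to M$ with $U\subset M^r$ a union of integral curves of $\vec X^r$, and $\vec X$ then admits a local superposition law; by Theorem \ref{C1THElocallie} the enveloping algebra $\mathfrak g(\vec X)$ is finite dimensional. The task is to manufacture the Lie group and its pretransitive action out of $\varphi$, following Vessiot's idea. The key step is to use $\varphi$ to define, for each pair of $r$-frames $\bar x,\bar y$ lying on the same leaf of the foliation $\mathcal F_\Psi$ on $M^{r+1}$ (equivalently, in the same fibre of $U\to U/G$), the map $M\to M$, $\lambda\mapsto\varphi(\bar y;\psi(\bar x,\lambda))$ — i.e. first recover the "constant" $\lambda$ attached to $\bar x$ and the target point, then propagate it to $\bar y$. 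These maps are analytic diffeomorphisms of (an open subset of) $M$; I would show they form a pseudogroup that is actually a group $G$ acting on $M$, with composition inherited from composing the transition maps, and that the orbits of $G$ in $M^r$ are exactly the leaves of $\mathcal F_\Psi$, which are principal (the superposition law recovers a point uniquely, so the action is free) — so the action is pretransitive with $U/G$ the leaf space. Finally, differentiating the $G$-action one recovers that $\mathfrak g(\vec X)$ consists of fundamental vector fields, i.e. $\vec X$ is a Lie-Vessiot system relative to this $G$.

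The main obstacle is the construction of the group $G$ and the verification that it is a genuine (finite-dimensional complex analytic) Lie group acting analytically and \emph{pretransitively} — not merely a pseudogroup of local transformations. Concretely, one must (i) check the transition maps compose consistently and are invertible, giving an abstract group structure; (ii) endow $G$ with a complex analytic manifold structure — here the analyticity of the quotient $U/G$ (which must itself be extracted, not assumed, from the superposition law) and of the holonomy of $\mathcal F_\Psi$ is what makes the group law analytic; (iii) identify the Lie algebra of $G$ with $\mathfrak g(\vec X)$ via fundamental fields, using the finite-dimensionality from Theorem \ref{C1THElocallie} to guarantee $\dim G<\infty$. Controlling the domains of definition (shrinking $U$, passing to connected components, using the open set $B$ of principal points from Proposition \ref{ApBsetofbasis}) so that everything is globally consistent, rather than only germ-wise, is the delicate bookkeeping that the rest of Section \ref{SectionGlobal} will have to carry out.
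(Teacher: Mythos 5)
Your overall architecture matches the paper's: in one direction you trivialize the principal bundle $U\to U/G$ furnished by pretransitivity and push the system through to the automorphic equation on $G$; in the other you extract the group from the superposition law following Vessiot. But there is a concrete error in the first direction. Your formula $\varphi(\bar x;\lambda)=g(\bar x)^{-1}\cdot\lambda$ has the inverse in the wrong place, and your own justification exposes it: you correctly argue that $g(\bar x(t))$ solves the automorphic system $\vec A$ (because $g$ is $G$-equivariant and hence carries fundamental fields of the action on $M^r$ to right-invariant fields on $G$), but then, by Proposition \ref{automorphicproperties}(i), it is $g(\bar x(t))\cdot\lambda$ — not $g(\bar x(t))^{-1}\cdot\lambda$ — that solves $\vec X$; the inverse of a solution of a right-invariant automorphic system is not a solution of it. The map with the inverse, $\psi(\bar x,\lambda)=g(\bar x)^{-1}\cdot\lambda$, is precisely the partial inverse that is \emph{constant} along solutions of $\vec X^{r+1}$, which the paper uses in Lemma \ref{C1LEM1.2.11} to produce the \emph{local} superposition law $\Psi=\psi^*\mathcal O_M$. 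As written, your $\varphi$ recovers first integrals, not the general solution.

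In the necessity direction you have correctly located the hard point — promoting the transformations $\lambda\mapsto\varphi(\bar y,\psi(\bar x,\lambda))$ from a pseudogroup to a genuine analytic group acting pretransitively — but you list it as an obstacle without the device that resolves it, and your restriction to pairs $\bar x,\bar y$ ``in the same fibre of $U\to U/G$'' is circular, since $G$ is exactly what is being constructed. The missing ingredients are: (i) pass to a \emph{maximal} superposition law by Zorn's lemma on the restriction order, and (ii) prove the Extension Lemma \ref{LMextensionlemma} (if $\bar x,\bar y\in U$ and $\bar\varphi(\bar x,\bar z)=\bar y$ then $\bar z\in U$). This is what makes $G=\{\sigma_{\bar x}=\varphi(\bar x,\cdot)\mid \bar x\in U\}$ closed under $\sigma_2\sigma_1^{-1}$, and later what shows $U$ is a union of principal orbits. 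The analytic structure on $G$ then comes from exhibiting $\pi\colon U\to G$ as a fibration with fibres of constant dimension $nr-s$ (cut out by the kernel of $T_{\bar x}U\to\mathfrak g$), and pretransitivity from the resulting splitting $U\simeq U_0\times G$ of Lemma \ref{lema6.7}. Without maximality none of these steps go through, so this is a genuine gap rather than bookkeeping; the rest of your outline (tangent vectors of $U$ mapping onto a finite-dimensional Lie algebra $\mathfrak g$ containing every $\vec X_t$, via the argument of Theorem \ref{C1THElocallie}) is sound and is what the paper does.
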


\subsubsection*{Pretransitive Lie-Vessiot Systems admit Superposition Laws}

  Consider a faithful pretransitive Lie group action,
$$G\times M\to M,\quad (\sigma, x) \mapsto \sigma(x),$$
and assume that $\vec X$ is a Lie-Vessiot system relative to the
action of $G$. Let us consider the application,
$$\mathcal R(G) \to \mathfrak X(M), \quad \vec A \mapsto \vec A^M,$$
that send a right-invariant vector field in $G$ to its corresponding
fundamental vector field in $M$. Let $\{A_1,\ldots,A_s\}$ be a basis
of $\mathcal R(G)$, and denote by $\vec X_i$ their corresponding
fundamental vector fields $\vec A_i^M$. Then,
$$\vec X = \partial + \sum_{i=1}^s f_i(t)\vec X_i, \quad f_i(t)\in\mathcal
O_S.$$

\begin{Definition}\label{automorphic}
 The following vector field in $S\times G$,
$\vec A = \partial + \sum_{i=1}^s f_i(t)\vec A_i,$
is called the \emph{automorphic system} associated to $\vec X$.
\end{Definition}

From the definition of fundamental vector fields we know that if
$\sigma(t)$ is a solution of $\vec A$, then $\sigma(t)\cdot x_0$ is
a solution of $\vec X$ for any $x_0\in M$. 
The automorphic system is a particular case of a Lie-Vessiot system. 
The relation between solutions of the Lie-Vessiot system $\vec X$ and its 
associated automorphic system $\vec A$ leads to Lie's reduction method and representation 
formulas as exposed in \cite{Blazquez2008}.

\begin{proposition}\label{automorphicproperties}
The automorphic system satisfies:
\begin{itemize}
\item[(i)] Let $\vec A$ be the automorphic system associated to $\vec X$, and 
$\sigma(t)$ a solution of $\vec A$ defined in $S'\subset S$. Then, 
$\sigma(t)\cdot x_0$ is a solution of $\vec X$.
\item[(ii)] The automorphic system associated to  $\vec A$ is $\vec A$ itself.
\item[(iii)] The group law $G\times G\to G$ is the superposition law admitted by
the automorphic system.
\item[(iv)] If $\sigma(t)$ and $\tau(t)$ are two solutions of $\vec A$, then
$\sigma(t)^{-1}\tau(t)$ is a constant element of $G$. 
\end{itemize}
\end{proposition}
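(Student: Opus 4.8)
The plan is to verify the four items essentially from the definitions, using that right-invariant vector fields generate left translations and that left-invariant vector fields generate right translations (Remark \ref{THLeftRight}), together with the flow formula $\Phi^{\vec A^M}(t,x) = \exp(t\vec A)\cdot x$ established for fundamental vector fields. For (i), I would simply recall the computation already announced after Definition \ref{automorphic}: if $\sigma(t)$ solves $\vec A = \partial + \sum_i f_i(t)\vec A_i$ in $S\times G$, then along the curve $(t,\sigma(t))$ one has $\frac{d}{dt}\bigl(\sigma(t)\cdot x_0\bigr) = \sum_i f_i(t)\,(\vec A_i^M)_{\sigma(t)\cdot x_0} = \bigl(\vec X - \partial\bigr)_{\sigma(t)\cdot x_0}$, since $\vec A_i$ projects to $\vec X_i = \vec A_i^M$ under the action map; hence $t\mapsto \sigma(t)\cdot x_0$ solves $\vec X$. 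Item (ii) is immediate: apply $G$ to itself by left translation; the fundamental vector fields of this action are exactly the right-invariant vector fields $\vec A_i$ themselves, so the enveloping algebra of $\vec A$ is spanned by the $\vec A_i$ and the associated automorphic system, built from the very same functions $f_i(t)$, is $\vec A$.

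For (iii) I would argue that $G$, with $G$ acting on itself by left translation, is a principal homogeneous $G$-space of rank $1$; a single point $\tau_0 \in G$ is a basis, and the map $\varphi\colon G\times G \to G$, $(\tau,\lambda)\mapsto \tau\lambda$ (the group law, with the second copy of $G$ as parameter space) satisfies the two conditions of a superposition law. Condition (a) holds because $G$ itself, a single orbit, is a union of integral curves of $\vec A$. For condition (b): if $\sigma(t)$ is a solution of $\vec A$, then by (i) (applied to the identity action) $\sigma(t)\cdot\lambda = \sigma(t)\lambda$ is a solution for every $\lambda\in G$; conversely, given any solution $\rho(t)$ of $\vec A$, set $\lambda := \sigma(t_0)^{-1}\rho(t_0)$ — then $\sigma(t)\lambda$ and $\rho(t)$ are two solutions agreeing at $t_0$, hence equal by uniqueness, so $\varphi$ recovers the general solution. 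Finally, distinct $\lambda$ give distinct solutions since $\sigma(t_0)\lambda$ determines $\lambda$; so $\varphi$ is a genuine superposition law in the classical sense.

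Item (iv) is the conceptual heart, though still short. Given two solutions $\sigma(t),\tau(t)$ of $\vec A$, I want to show $\gamma(t):=\sigma(t)^{-1}\tau(t)$ is constant. The clean way is to observe that $\vec A$ is, up to the $\partial$ summand, a time-dependent combination of \emph{right-invariant} vector fields, whose flows are \emph{left} translations; thus the flow of $\vec A$ between times $t_0$ and $t$ is $\sigma\mapsto g(t,t_0)\,\sigma$ for a single group element $g(t,t_0)$ (independent of the initial point $\sigma$), namely $g(t,t_0) = \tau(t)\tau(t_0)^{-1} = \sigma(t)\sigma(t_0)^{-1}$. Hence $\sigma(t) = g(t,t_0)\sigma(t_0)$ and $\tau(t) = g(t,t_0)\tau(t_0)$, so $\sigma(t)^{-1}\tau(t) = \sigma(t_0)^{-1} g(t,t_0)^{-1} g(t,t_0)\,\tau(t_0) = \sigma(t_0)^{-1}\tau(t_0)$, a constant. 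Equivalently, and this is the point I would make rigorous, the infinitesimal generator of right translations $\gamma\mapsto \gamma\cdot\delta$ acting on the second factor commutes with $\vec A$ because $[\mathcal R(G),\mathcal L(G)] = 0$; so $\vec A$ is symmetric under right translations, and the map $(\sigma,\tau)\mapsto \sigma^{-1}\tau$, which is constant along the diagonal flow, is a first integral of the lifted system $\vec A^2$ restricted to the graph of the flow. The main obstacle is purely expository: making precise that the flow of $\vec A$ is a \emph{uniform} left translation independent of the starting point — this is exactly the content of ``right-invariant vector fields are infinitesimal generators of monoparametric groups of left translations'' promoted to the time-dependent setting, and once that is granted (iv) follows with no computation.
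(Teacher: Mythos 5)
Your proof is correct, and items (i)--(iii) follow essentially the same route as the paper's (which is far terser: the paper disposes of (i) by the same projection-of-tangent-vectors remark, of (ii) by noting $\vec A_i^G=\vec A_i$, and of (iii) by declaring it ``a direct consequence of (i)'' --- your uniqueness-of-solutions argument is exactly the detail being suppressed there). The only genuine divergence is in (iv): the paper derives it in one line from (iii), namely since the group law is a superposition law there is a constant $\lambda\in G$ with $\tau(t)=\sigma(t)\cdot\lambda$, hence $\sigma(t)^{-1}\tau(t)=\lambda$; you instead argue directly that the time-dependent flow of $\vec A$ is a uniform left translation $g(t,t_0)$ independent of the initial point, which gives the same conclusion. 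Both are valid; note that your own proof of (iii) already establishes that every solution has the form $\sigma(t)\lambda$ with $\lambda$ constant, so the flow argument for (iv), while a nice self-contained alternative resting on $[\mathcal R(G),\mathcal L(G)]=0$, is strictly redundant given what you proved two lines earlier.
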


\begin{proof}
$(i)$ The tangent vector a $t=t_0$ to the curve $\sigma(t)\cdot x_0$ is precisely the projection
by the action of the tangent vector to $\sigma(t)$ at $t=t_0$. This is by definition,
$\vec A^{M}_{\sigma(t_0)\cdot x_0}$.

\noindent
$(ii)$ When we consider the case of a Lie group acting on itself, its algebra of fundamental
fields is its algebra of right invariant fields. We have $\vec A_i^G = \vec A_i$. 

\noindent
$(iii)$ It is a direct consequence of $(i)$.

\noindent
$(iv)$ Let $\sigma(t)$ and $\tau(t)$ be two solutions of $\vec A$. By $(iii)$ we know that the group
law is a superposition law. Thus, there is a constant $\lambda\in G$ such that $\sigma(t)\cdot \lambda = \tau(t)$.
Then, we have $\sigma(t)^{-1}\tau(t) = \lambda$.
\end{proof}

\medskip

There exists an analytic open subset $W\subset M^r$
which is union of principal orbits, and there exist the quotient $W/G$
as an analytic manifold. In such case the bundle,
$$\pi\colon W \to W/G,$$
is a \emph{principal bundle} modeled over the group $G$. Consider a
section $s$ of $\pi$ defined in some analytic open subset $V\subset
W/G$. Let $U$ be the preimage of $V$ for $\pi$. The section $s$
allow us to define a surjective function,
$$g\colon U \to G, \quad \bar x \to g(\bar x) \quad g(\bar x)\cdot s(\pi(\bar x)) = \bar x$$
which is nothing but the usual trivialization function. We define the following
maps $\varphi$ and $\psi$,
$$\varphi\colon U \times M \to M, \quad (\bar x ,y) \mapsto g(\bar
x)\cdot y$$
$$\psi\colon U \times M \to M, \quad (\bar x, y) \mapsto g(\bar
x)^{-1}\cdot y.$$

\begin{lemma}\label{C1LEM1.2.11}
  The section $\varphi$ is a superposition law for $\vec X$, and $\Psi =
\psi^*\mathcal O_M$ is a local superposition law for $\vec X$.
\end{lemma}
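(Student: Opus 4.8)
The plan is to verify directly that $\varphi$ satisfies conditions (a) and (b) of the definition of superposition law, and then deduce the statement about $\Psi$ from the proposition that a superposition law always yields a local superposition law. First I would check condition (a): $U = \pi^{-1}(V)$ is by construction a union of principal orbits of $G$ in $M^r$, so I must show that every principal orbit $O_{\bar x}$ with $\bar x\in U$ is a union of integral curves of $\vec X^r$. This follows because $\vec X$ is a Lie-Vessiot system, $\vec X = \partial + \sum_i f_i(t)\vec X_i$ with $\vec X_i = \vec A_i^M$ fundamental fields; hence $\vec X^r = \partial + \sum_i f_i(t)\vec A_i^{M^r}$ is, up to the time direction, a combination of fundamental fields of the diagonal action on $M^r$, and fundamental fields are tangent to orbits. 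Thus a solution $\bar x(t)$ starting in $O_{\bar x}$ stays in $O_{\bar x}$, and $U$ is saturated under the flow of $\vec X^r$.

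Next I would establish the key algebraic identity relating solutions of $\vec X^r$ in $W$ to solutions of the automorphic system $\vec A$ in $G$: if $\bar x(t)$ is a solution of $\vec X^r$ with values in $U$, then $g(\bar x(t))$ is a solution of $\vec A$ in $G$. The point is that the trivialization $g\colon U\to G$ defined by $g(\bar x)\cdot s(\pi(\bar x)) = \bar x$ intertwines the $G$-action on $U$ (by left translation on $G$) with the motion; differentiating $g(\bar x(t))$ and using that $\dot{\bar x}(t) = \sum_i f_i(t)\,\vec A_i^{M^r}|_{\bar x(t)}$ while $\pi(\bar x(t))$ is constant (since the flow preserves orbits), one gets $\frac{d}{dt}g(\bar x(t)) = \sum_i f_i(t)\,\vec A_i|_{g(\bar x(t))}$, i.e. $g(\bar x(t))$ solves $\vec A$. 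Conversely any solution of $\vec A$ through a point of $g(U)$ arises this way. With this in hand, condition (b) is immediate: given a solution $\bar x(t)$ of $\vec X^r$ in $U$, set $\sigma(t) = g(\bar x(t))$, a solution of $\vec A$; then $\varphi(\bar x(t);y) = g(\bar x(t))\cdot y = \sigma(t)\cdot y$, which by Proposition \ref{automorphicproperties}(i) is a solution of $\vec X$ for every $y\in M$. To see it is the \emph{general} solution with $y$ as parameter, fix $t_0\in S$; since the action is faithful and $\sigma(t_0)\in G$ is a fixed group element, the map $y\mapsto \sigma(t_0)\cdot y$ is a bijection of $M$, so every initial condition at $t_0$ is attained, and distinct $y$ give distinct solutions. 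Hence $\varphi$ is a superposition law.

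For the second assertion, $\Psi = \psi^*\mathcal O_M$: $\psi(\bar x, y) = g(\bar x)^{-1}\cdot y$, so a first integral $h$ of $\mathcal O_M$ pulls back to $h(g(\bar x)^{-1}\cdot y)$ on $U\times M\subset M^{r+1}$. Along a joint solution $(\bar x(t), x(t))$ of $\vec X^{r+1}$ we have $x(t) = \sigma(t)\cdot x_0$ for the same $\sigma(t) = g(\bar x(t))$ (by the previous paragraph applied in $M$), so $g(\bar x(t))^{-1}\cdot x(t) = x_0$ is constant; thus $\psi^*\mathcal O_M$ consists of first integrals of $\vec X^{r+1}$, giving condition (2) of Definition \ref{ATdef6}. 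Regularity of dimension $\ge n$ (condition (1)) holds because $\psi$ restricted to each fiber $\{\bar x\}\times M\to M$ is the biholomorphism $y\mapsto g(\bar x)^{-1}\cdot y$, so $\Psi$ is locally the pullback of the (regular, dimension $0$) structure on $M$ along a submersion with $n$-dimensional fibers; and transversality of $\mathcal F_\Psi$ to the fibers of $M^{r+1}\to M^r$ (condition (3)) is exactly the statement that this fiberwise map is a local isomorphism, which it is.

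The main obstacle is the identity $\frac{d}{dt} g(\bar x(t)) = \sum_i f_i(t)\,\vec A_i|_{g(\bar x(t))}$ — i.e. checking that the trivialization function genuinely converts the lifted Lie-Vessiot flow into the automorphic flow on $G$. This rests on the compatibility between the fundamental-field action on $M^r$ and left translation on the principal bundle $W\to W/G$, together with the fact (from condition (a)) that orbits are flow-invariant so that $\pi(\bar x(t))$ is constant. Everything else is routine unwinding of the definitions of principal bundle, trivialization, and fundamental vector field, plus the elementary observation that for fixed $t_0$ the map $y\mapsto \sigma(t_0)\cdot y$ is bijective.
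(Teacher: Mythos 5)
Your proposal is correct and follows essentially the same route as the paper: the crux in both is that the equivariant trivialization $g\colon U\to G$ carries the lifted field $\vec X^{r}$ to the automorphic system $\vec A$, so that $\varphi(\bar x(t);y)=g(\bar x(t))\cdot y$ gives the general solution, and $\psi(\bar x(t),x(t))=g(\bar x(t))^{-1}\cdot x(t)$ is constant along solutions of $\vec X^{r+1}$. You merely verify more of the definitional conditions (saturation of $U$ by integral curves, and the regularity and transversality of $\Psi$) explicitly than the paper does, which is harmless.
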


\begin{proof}
 Let us consider the map $g\colon U \to G$. By construction it is
a morphism of $G$-spaces: it verifies $g(\sigma\cdot\bar x) =
\sigma\cdot g(\bar x)$. Therefore, it maps fundamental vector fields
in $U$ to fundamental vector fields in $G$. Let us remember that
fundamental vector fields in $G$ are right invariant vector fields.
We have that $\vec X^r$ is projectable by $Id\times g$,
$$Id\times g \colon S \times U \to S \times G, \quad \vec X^{r}\mapsto \vec A,$$
and the image of $\vec X^r$ is the automorphic vector field $\vec
A$.

Finally, let us consider $r$ particular solutions of $\vec X$,
$x^{(1)}(t),\ldots,x^{(r)}(t)$. We denote by $\bar x(t)$ the curve
$(x^{(1)}(t),\ldots,x^{(r)}(t))$ in $M^{r}$. Therefore, $\bar x(t)$
is a solution of $\vec X^r$ in $M^r$. Hence, $g(\bar x(t))$ is a
solution of the automorphic system $\vec A$ in $G$, and for $x_0\in
M$ we obtain by composition $g(\bar x(t))\cdot x_0$, which is a
solution of $\vec X$. By the uniqueness of local solutions we know
that when $x_0$ varies in $M$ we obtain the general solution. Then,
$$\varphi(x^{(1)}(t),\ldots,x^{(r)}(t),x_0) = g(\bar x(t))\cdot
x_0$$ is the general solution and $\varphi$ is a superposition law
for $\vec X$.

With respect to $\psi$, let us note that it is a partial inverse for
$\varphi$ in the sense,
$$\psi(\bar x,\varphi(\bar x, x)) = x, \quad \varphi(\bar
x,\psi(\bar x, x))=x.$$ Hence, if $(\bar x(t),x(t))$ is a solution
of $\vec X^{r+1}$ then $\varphi(\bar x(t),\psi(\bar x(t),x(t)) =
x(t)$ is a solution of $\vec X$ and then $\psi(\bar x(t),x(t))$ is a
constant point $x_0$ of $M$, and $\Psi = \psi^*\mathcal O_M$ is a
sheaf of first integrals of $\vec X^r$.
\end{proof}

\subsubsection*{Superposition Law implies Pretransitive Lie-Vessiot}

Consider a superposition law for $\vec X$,
$$\varphi\colon U \times M \to M.$$
First, let us make some consideration on the open subset $U\subset
M^r$ in the definition domain of $\varphi$. Consider the family
$\{(\varphi_{\lambda},U_{\lambda})\}_{\lambda\in\Lambda}$ of
different superposition laws admitted by $\vec X$,
$\varphi_{\lambda}$ defined in $U_{\lambda}\times M$. There is a
natural partial order in this family: we write $\lambda < \gamma$ if
$U_\lambda\subset U_\gamma$ and $\varphi_{\lambda}$ is obtained from
$\varphi_{\gamma}$ by restriction:
$\varphi_{\gamma}|_{U_{\lambda}\times M} = \varphi_\lambda$. For a
totally ordered subset $\Gamma\subset \Lambda$ we construct a
supreme element by setting,
$$U_{\Gamma} = \bigcup_{\gamma\in \Gamma} U_{\gamma},$$
and defining $\varphi_{\Gamma}$ as the unique function defined in
$U_{\Gamma}$ compatible with the restrictions. By Zorn lemma, we can
assure that there exist a superposition law $\varphi$ defined in
some $U\times M$ which is maximal with respect to this order. From
now on \emph{we assume that the considered superposition law is
maximal.} It is clear that
$$\bar \varphi\colon U \times M^r \to M^r, \quad (\bar x, \bar y) \mapsto
(\varphi(\bar x, y^{(1)}), \varphi(\bar x, y^{(2)}), \ldots, \varphi
(\bar x, y^{(r)})),$$ is a superposition formula for the lifted
Lie-Vessiot system $\vec X^{r}$ in $M^r$. For each $\bar x \in U$ we
consider the map $$\sigma_{\bar x}\colon M \to M, \quad x\mapsto
\varphi(\bar x, x).$$ It is clear that $\sigma_{\bar x}$ is a
complex analytic automorphism of $M$. We denote by $\bar
\sigma_{\bar x}$ for the cartesian power of $\sigma_{\bar x}$ acting
component by component,
$$ \bar \sigma_{\bar x}\colon M^r \to M^r, \quad \bar y \mapsto
\bar \varphi(\bar x, \bar y).$$ The map $\bar\sigma_{\bar x}$ is a
complex analytic automorphism of $M^r$.

\begin{lemma}[(Extension Lemma)] \label{LMextensionlemma}
If $\bar x$ and $\bar y$ are in $U$ and there exist $\bar z\in M^r$
such that $\bar\varphi(\bar x,\bar z) = \bar y$, then $\bar z$ is
also in $U$.
\end{lemma}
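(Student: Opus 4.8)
The plan is to contradict the maximality of $\varphi$: assuming $\bar z\notin U$, I will extend $\varphi$ to a superposition law for $\vec X$ whose domain is an open, $\vec X^r$-saturated set strictly larger than $U$ and containing $\bar z$. To set up, for $\bar x\in U$ put $\sigma_{\bar x}:=\varphi(\bar x,\cdot)\in\mathrm{Aut}(M)$, fix a reference point $t_0\in S$, and write $\bar x(t)$ for the solution of $\vec X^r$ through $\bar x$ at $t_0$ (which stays in $U$, since $U$ is a union of integral curves). Because $\varphi(\bar x(t),\lambda)$ is, for fixed $\lambda$, the solution of $\vec X$ through $\varphi(\bar x,\lambda)$, one gets $\sigma_{\bar x(t)}=\Phi_{t_0,t}\circ\sigma_{\bar x}$, where $\Phi_{t_0,t}$ is the time-$t_0$-to-$t$ flow of $\vec X$; the key point is that the factor $\Phi_{t_0,t}$ is independent of $\bar x$, so $\sigma_{\bar a}^{-1}\circ\sigma_{\bar b}$ is constant along the pair of solutions through $\bar a,\bar b$. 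After a harmless renormalisation of $\varphi$ (replacing the parameter space by $M$ itself, as in the preceding proposition) so that $\sigma_{\bar x_0}=\mathrm{id}_M$ for some $\bar x_0\in U$, these facts yield the \emph{cocycle identity}
$$\sigma_{\bar\varphi(\bar a,\bar b)}=\sigma_{\bar a}\circ\sigma_{\bar b}\qquad\text{whenever }\bar a,\ \bar b\ \text{and}\ \bar\varphi(\bar a,\bar b)\ \text{lie in }U,$$
equivalently $\bar\sigma_{\bar\varphi(\bar a,\bar b)}=\bar\sigma_{\bar a}\circ\bar\sigma_{\bar b}$ for the componentwise automorphisms $\bar\sigma_{\bar a}=\bar\varphi(\bar a,\cdot)$ of $M^r$.

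Now $\bar z=\bar\sigma_{\bar x}^{-1}(\bar y)$ with $\bar x,\bar y\in U$. Choose an open $W_0\ni\bar z$ with $\bar\sigma_{\bar x}(W_0)\subset U$, and let $W$ be the union of the solution curves of $\vec X^r$ meeting $\{t_0\}\times W_0$; this is open, $\vec X^r$-saturated, and contains $\bar z$. For $\bar w\in W_0$ set $\sigma_{\bar w}:=\sigma_{\bar x}^{-1}\circ\varphi(\bar\varphi(\bar x,\bar w),\cdot)$ — a well-defined automorphism of $M$, since $\bar\varphi(\bar x,\bar w)\in U$ — and propagate it along solutions by $\sigma_{\bar w(t)}:=\Phi_{t_0,t}\circ\sigma_{\bar w}$. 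By the cocycle identity $\sigma_{\bar w}=\varphi(\bar w,\cdot)$ on $W_0\cap U$, and since $U$ is a union of solution curves and $\varphi$ obeys the same flow relation, this forces $\sigma_{\bar w}=\varphi(\bar w,\cdot)$ on all of $W\cap U$; hence $\hat\varphi\colon\hat U\times M\to M$ with $\hat U:=U\cup W$ and $\hat\varphi(\bar w,\lambda):=\sigma_{\bar w}(\lambda)$ is a well-defined analytic map extending $\varphi$.

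To finish, I would check $\hat\varphi$ is a superposition law. Its domain $\hat U$ is open and $\vec X^r$-saturated by construction; and if $\gamma(t)$ is a solution of $\vec X^r$ inside $\hat U$ that meets $W$, say $\gamma(t)=\bar w(t)$ with $\bar w\in W_0$, then $\hat\varphi(\gamma(t),\lambda)=\sigma_{\bar w(t)}(\lambda)=\Phi_{t_0,t}(\sigma_{\bar w}(\lambda))$ is the solution of $\vec X$ through $\sigma_{\bar w}(\lambda)$, and since $\sigma_{\bar w}$ is bijective, letting $\lambda$ range over $M$ recovers the general solution of $\vec X$ (on curves contained in $U$ this is just $\varphi$). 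Thus $\hat\varphi$ is a superposition law defined on $\hat U\supsetneq U$, contradicting maximality; therefore $\bar z\in U$.

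The part I expect to be the real obstacle is the cocycle identity in the first paragraph. The tempting argument "it holds along integral curves, hence everywhere by analyticity" does not close, because in $\sigma_{\bar x(t)}=\Phi_{t_0,t}\circ\sigma_{\bar x}$ the flow factors do not cancel when two such relations are composed; instead one must first verify the identity along the normalising solution $\bar x_0(t)$ — where it is forced by $\sigma_{\bar x_0}=\mathrm{id}_M$ together with $\Phi_{t_0,t}=\sigma_{\bar x_0(t)}$ — and then propagate it over $U$ using analyticity and the uniqueness of solutions. Conceptually this step is precisely the assertion that the frames of $U$ carry a group-like composition, so the lemma is really the $\bar\sigma_{\bar a}$-invariance of $U$ for $\bar a\in U$, which is the seed of the group $G$ extracted later in the section.
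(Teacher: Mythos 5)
Your strategy is the same as the paper's: both arguments extend $\varphi$ beyond $U$ by the composition formula and then invoke maximality. Where the paper takes the set $V=\bigcup_{\bar x\in U}\bar\sigma_{\bar x}^{-1}(U)$ of all $\bar z$ with $\bar\varphi(\bar x,\bar z)=\bar y$ for some $\bar x,\bar y\in U$, notes it is open and contains $U$, and extends $\varphi$ to $V\times M$ by $\varphi(\bar z,\xi)=\varphi(\bar y,\sigma_{\bar x}^{-1}(\xi))$, you build a flow-saturated neighbourhood $W$ of $\bar z$ and set $\sigma_{\bar w}=\sigma_{\bar x}^{-1}\circ\sigma_{\bar\varphi(\bar x,\bar w)}$; these are the same idea. (Incidentally, your order of composition, $\sigma_{\bar z}=\sigma_{\bar x}^{-1}\circ\sigma_{\bar y}$, is the one consistent with the paper's commutative diagram and with the linear example $Z=X^{-1}Y$, whereas the paper's displayed formula \eqref{ATeq21} writes $\sigma_{\bar y}\circ\sigma_{\bar x}^{-1}$; your version is the correct one. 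Your explicit check that the extension is again a superposition law is also more careful than the paper's.)

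The crux in both proofs is the identity $\sigma_{\bar\varphi(\bar a,\bar b)}=\sigma_{\bar a}\circ\sigma_{\bar b}$ for $\bar a,\bar b,\bar\varphi(\bar a,\bar b)$ all in $U$, which is what forces the extension formula and its compatibility with $\varphi$ on $U$. The paper asserts it from a commutative diagram with no argument; you try to derive it and correctly identify it as the obstacle, but your derivation does not close. Verifying the identity for $\bar a$ on the normalising solution $\bar x_0(t)$ only checks it on a one-complex-dimensional curve inside the $nr$-dimensional $\bar a$-domain, and agreement on a positive-codimension set gives nothing via the identity theorem. The observation that $\sigma_{\bar a}^{-1}\circ\sigma_{\bar\varphi(\bar a,\bar b)}$ is constant along solutions of $\vec X^r$ in $\bar a$ (which does follow from your flow relation) only reduces the problem to a transversal of the flow, still of dimension $nr-1$, not to the single curve where the identity is known. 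Note also that the identity genuinely requires the normalisation and not just the superposition-law axioms: for $\dot x=a(t)x$ the map $\varphi(x,\lambda)=x(\lambda+1)$ is a maximal superposition law for which $\sigma_{\varphi(a,b)}\neq\sigma_a\circ\sigma_b$. So you have located the key step precisely, and flagged it honestly, but neither your sketch nor the paper's one-line assertion actually proves it; as written the proposal has a gap exactly there.
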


\begin{proof}
Let us consider $\bar x$, $\bar y$, $\bar z$ in $U$ such that $\bar
\varphi(\bar x,\bar z) = \bar y$. Then we have a commutative diagram
$$\xymatrix{M \ar[rd]^-{\sigma_{\bar x}} \\ M \ar[r]^-{\sigma_{\bar y}} \ar[u]^-{\sigma_{\bar z}} & M}$$
and it is clear that $\sigma_{\bar z} = \sigma_{\bar y}\sigma_{\bar
x}^{-1}$, or equivalently
\begin{equation}\label{ATeq21}
\varphi(\bar z, \xi) = \varphi(\bar y, \sigma_{\bar x}^{-1}(\xi))
\end{equation}
for all $\xi$ in $M$. Now let us consider $\bar x$ and $\bar y$ in
$U$ and $\bar z\in M^r$ verifying the same relation $\bar
\varphi(\bar x,\bar z) = \bar y$. We can define $\varphi(\bar z,
\xi)$ as in equation \eqref{ATeq21}.


 Let $V$ be the set of all $r$-frames $\bar z$ satisfying $\varphi(\bar x, \bar z) = \bar y$
for certain $\bar x$ and $\bar y$ in $U$. Let us see that $V$ is an open subset
containing $U$. Relation $\bar \varphi(\bar x,\bar z) = \bar y$ is
equivalent to $\bar z = \bar\sigma_{\bar x}^{-1}(\bar y)$, and then
it is clear that
$$V = \bigcup_{\bar x\in U} \bar\sigma^{-1}_{\bar x}(U),$$
which is union of open subsets, and then it is an open subset. The
superposition law $\varphi$ naturally extends to $V\times M$, by
means of formula \eqref{ATeq21}. Because of the maximality of this
superposition law, we conclude that $U$ coincides with $V$ and then
such a $\bar z$ is in $U$.
\end{proof}

\begin{lemma}
Let $G$ be following set of automorphisms of $M$,
$$G = \{\sigma_{\bar x} \,|\, \bar x \in U \}.$$
Then $G$ is a group of automorphisms of $M$.
\end{lemma}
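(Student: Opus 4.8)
The plan is to verify the three group axioms for $G = \{\sigma_{\bar x} \mid \bar x \in U\}$ under composition of automorphisms of $M$, using the Extension Lemma (Lemma \ref{LMextensionlemma}) as the main engine. The key observation, already visible in the proof of the Extension Lemma, is the commutation identity: if $\bar\varphi(\bar x, \bar z) = \bar y$ with $\bar x, \bar y \in U$, then $\sigma_{\bar z} = \sigma_{\bar y}\sigma_{\bar x}^{-1}$, and by the Extension Lemma such a $\bar z$ necessarily lies in $U$. This is precisely what lets us stay inside the indexing set $U$ while performing group operations.

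First I would establish closure under composition. Given $\sigma_{\bar x}, \sigma_{\bar y} \in G$ with $\bar x, \bar y \in U$, I want to produce $\bar z \in U$ with $\sigma_{\bar z} = \sigma_{\bar y}\sigma_{\bar x}$. The natural candidate is $\bar z = \bar\sigma_{\bar y}(\bar x) = \bar\varphi(\bar y, \bar x) \in M^r$; then $\bar\varphi(\bar y, \bar x) = \bar z$ exhibits $\bar z$ in the form required by the Extension Lemma (with the roles: $\bar y \in U$, $\bar x \in U$, and $\bar z$ the target), so $\bar z \in U$. Unwinding the definition $\sigma_{\bar z}(\xi) = \varphi(\bar z, \xi)$ and using the commutative-diagram identity from the Extension Lemma proof — $\sigma_{\bar z} = \sigma_{(\text{target})}\sigma_{(\text{source})}^{-1}$ applied to $\bar\varphi(\bar x', \bar z) = \bar y'$ — one reads off that $\sigma_{\bar z} = \sigma_{\bar y}\sigma_{\bar x}$. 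So I must be careful to match the labels correctly: the Extension Lemma says that whenever $\bar\varphi(\bar a, \bar c) = \bar b$ with $\bar a, \bar b \in U$ we get $\bar c \in U$ and $\sigma_{\bar c} = \sigma_{\bar b}\sigma_{\bar a}^{-1}$; taking $\bar a = \bar x$, $\bar c = \bar z$, $\bar b = \bar\varphi(\bar x, \bar z)$, I need $\bar\varphi(\bar x, \bar z) \in U$, which is where I would instead pick $\bar z$ so that this holds — concretely $\bar z$ defined by $\bar\sigma_{\bar z} = \bar\sigma_{\bar y}\bar\sigma_{\bar x}$ as automorphisms of $M^r$, check it maps $U$-elements to $U$-elements, and conclude membership.

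Next comes the identity element and inverses. For the identity: by the Proposition on reduction to initial conditions, the superposition law may be taken so that for suitable $\bar x_0 \in U$ (namely $\bar x(t_0)$ for an appropriate solution) we have $\varphi(\bar x_0, x) = x$ for all $x \in M$, i.e. $\sigma_{\bar x_0} = \mathrm{id}_M \in G$. For inverses: given $\sigma_{\bar x} \in G$, the map $\bar\sigma_{\bar x}^{-1}$ is an automorphism of $M^r$, and applying the Extension Lemma to the relation $\bar\varphi(\bar x, \bar\sigma_{\bar x}^{-1}(\bar x_0)) = \bar x_0 \in U$ — with $\bar x \in U$ and $\bar x_0 \in U$ — yields that $\bar z := \bar\sigma_{\bar x}^{-1}(\bar x_0) \in U$, and the commutation identity gives $\sigma_{\bar z} = \sigma_{\bar x_0}\sigma_{\bar x}^{-1} = \sigma_{\bar x}^{-1}$. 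Associativity is automatic since $G$ sits inside the group of all complex analytic automorphisms of $M$. The main obstacle I anticipate is purely bookkeeping: correctly tracking which frame plays the role of source, target, and witness in each invocation of the Extension Lemma, and verifying that $\bar x \mapsto \sigma_{\bar x}$ is well-behaved enough (single-valued, with $\sigma_{\bar x} = \sigma_{\bar y} \Rightarrow$ consistent group operations) — but no genuine analytic difficulty arises, since all the real content is already packaged in Lemma \ref{LMextensionlemma} and in the reduction to initial conditions.
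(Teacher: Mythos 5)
Your overall strategy (the Extension Lemma together with the identity relating $\sigma_{\bar z}$, $\sigma_{\bar x}$ and $\sigma_{\bar y}$ when $\bar\varphi(\bar x,\bar z)=\bar y$) is the right one, and your treatment of inverses and of the identity element is correct. The gap is in the closure-under-composition step, and you half-notice it without repairing it. Your candidate $\bar z=\bar\varphi(\bar y,\bar x)$ sits in the \emph{wrong slot} for the Extension Lemma: that lemma concludes that the \emph{second argument} $\bar c$ of a relation $\bar\varphi(\bar a,\bar c)=\bar b$ with $\bar a,\bar b\in U$ lies in $U$; it says nothing about the image $\bar\varphi(\bar a,\bar c)$ of two points of $U$. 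Your proposed repair --- ``pick $\bar z$ defined by $\bar\sigma_{\bar z}=\bar\sigma_{\bar y}\bar\sigma_{\bar x}$'' --- is circular: the existence of a frame $\bar z\in U$ realizing a prescribed automorphism is precisely what has to be proved. (The step is salvageable inside your framework: once the inverse frame $\bar y'\in U$ with $\sigma_{\bar y'}=\sigma_{\bar y}^{-1}$ is available, the relation $\bar\varphi(\bar y',\bar z)=\bar x$ with $\bar y',\bar x\in U$ puts $\bar z$ in the correct slot and the Extension Lemma applies; but you did not carry this out.)

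The paper avoids the issue entirely by verifying the one-step subgroup criterion instead of the axioms separately: given $\bar x,\bar y\in U$, local existence of solutions (equivalently, surjectivity of $\bar\sigma_{\bar x}$ on $M^r$) produces $\bar z\in M^r$ with $\bar\varphi(\bar x,\bar z)=\bar y$; this is \emph{exactly} the hypothesis configuration of the Extension Lemma, so $\bar z\in U$ and $\sigma_{\bar y}\sigma_{\bar x}^{-1}=\sigma_{\bar z}\in G$. Since $G$ is nonempty, this single application gives the subgroup property with no separate discussion of identity, inverses, or closure. Restructure your argument along those lines and the proof is complete.
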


\begin{proof}
  Let us consider two elements $\sigma_1$, $\sigma_2$ of $G$.
  They are respectively of the form
  $\sigma_{\bar x}$, $\sigma_{\bar y}$ with $\bar x$ and $\bar y$ in
  $U$. $\bar \varphi$ is a superposition law for $\vec X^r$.
  From the local existence of solutions for differential equations
  we know that there exist $\bar z\in M^r$ such that
  $\bar\varphi(\bar x,\bar z) = \bar y$. From the previous lemma we
  know that $\bar z\in U$. We have that $\sigma_{\bar y}\sigma_{\bar
  x}^{-1} = \sigma_{\bar z}$. Then $\sigma_2\sigma_1^{-1}$ is in $G$
  and it is a subgroup of the group of automorphisms of $M$.
\end{proof}

  Now let us see that $G$ is endowed with an structure of complex analytic Lie group
and that the Lie algebra $\mathcal R(G,M)$ of its fundamental
fields in $M$ contains the Lie-Vessiot-Guldberg algebra $\mathfrak
g(\vec X)$ of $\vec X$. The main idea is to translate infinitesimal
deformations in $U$ to infinitesimal deformations in $G$. This
approach goes back to Vessiot \cite{Vessiot1894}.

\medskip

  Consider $\bar x$ in $U$, and a tangent vector $\vec v_x \in T_{\bar
  x}U$. We can project this vector to $M$ by means of the superposition
  principle $\varphi(\bar x, x)$. Letting $x$ as a free variable we
  define a vector field $\vec V$ in $M$,
  $$\vec V_{\varphi(\bar x,x)} = \varphi'_{(\bar x,x)}(\vec v_x );$$
  where $\varphi'_{(\bar x, x)}$ is the tangent map to $\varphi$ at
  $(\bar x, x)$:
  $$ \varphi'_{(\bar x, x)}\colon T_{(\bar x,x)}(U\times M) \to
T_{\varphi(\bar x,x)}M.$$

  This assignation is linear, and then we obtain an injective map
  $$T_{\bar x}U \to \mathfrak X(M),  \quad \vec v_x \mapsto \vec V$$
  that identifies $T_{\bar x}U$ with a finite dimensional space
  of vector fields in $M$ which we denote by $\mathfrak g_{\bar x}$. Let us see that
  this space does not depends on $\bar x$ in $U$. Consider another $r$-frame
  $\bar y\in U$. There exist a unique $\bar z\in U$ such that
  $\bar\varphi(\bar x,\bar z) = \bar y$. Consider the map,
  $$R_{\bar z}\colon U \to M^r,\quad \xi \mapsto \bar\varphi(\xi, \bar
  z), \quad R_{\bar z}(\bar x).$$
  We have $\bar \varphi(\bar y, \xi) = \bar \varphi(R_{\bar z}\bar x, \xi)$,
  and then the vector field $\vec V$ induced in $M$ by the tangent vector
  $\vec v_x\in T_{\bar x}U$ is the same vector field that the induced
  by the tangent vector $R'_{\bar z}(\vec v)\in T_{\bar y}U$. Then
  we conclude that $\mathfrak g_{\bar x} \subseteq \mathfrak g_{\bar
  y}$. We can gave the same argument for the reciprocal by taking $\bar w$ such that
  $\bar\varphi(\bar y,\bar w) = \bar x$. Then $\mathfrak g_{\bar
  x}=\mathfrak g_{\bar y}$. We denote by $\mathfrak g$ this finite dimensional space
  of vector fields in $M$ and $s$ its complex dimension.
  This space $\mathfrak g$ is a quotient of $T_{\bar x}U$, so that Lie's inequality $s\leq nr$ holds.

\medskip

  Let us see that $\mathfrak g$ is a Lie algebra. We can invert, at
least locally, the superposition law with respect to the last
component,
$$x = \varphi(\bar x,\lambda), \quad \lambda = \psi(\bar x, x).$$
Denote by $\Psi$ the sheaf generated by the components $\psi_i$ of
these local inversions; $\Psi$ is a local superposition law.
Consider $\vec v\in T_{\bar x}U$ and $\vec V$ the induced vector
field $\vec V\in \mathfrak g$. It is just an observation that $\vec
V^r_{\bar x}$, the value at $\bar x$ of the $r$-th cartesian power
of $\vec V$ also induces the same vector field $\vec V$. Then, in
the language of small displacements ,
$$x + \varepsilon \vec V_x = \varphi(\bar x + \varepsilon \vec V^r_{\bar x},\lambda), \quad \lambda =
\psi(\bar x + \varepsilon \vec V^{r}_{\bar x}, x + \varepsilon \vec
V_x).$$ Then $\vec V^{r+1}\psi_i = 0$, and $\Psi$ is a sheaf of
first integrals of $r+1$ cartesian powers of the fields of
$\mathfrak g$. By using the the same argument that in the proof of Theorem
\ref{C1THElocallie} we conclude that $\mathfrak g$
spans a finite dimensional Lie algebra.

\medskip

  Now let us consider the surjective map $\pi\colon U \to G$. For
$\sigma\in G$ the preimage $\pi^{-1}(\sigma)$ is defined by analytic
equations,
\begin{equation}\label{ATeq22}
\pi^{-1}(\sigma) = \{\bar x \in U\,\,|\,\, \forall x\in M
\,\,\varphi_i(\bar x, x) = \sigma(x)\}.
\end{equation}

Let us see that $\pi^{-1}(\sigma)$ is a closed sub-manifold of $U$. Let us
compute the tangent space to $\pi^{-1}(\sigma)$ at $\bar x$. A
tangent vector $\vec v\in T_{\bar x}U$ is tangent to the fiber of
$\sigma$ if and only if $\vec v$ is into the kernel of the canonical
map $T_{\bar x}U \to \mathfrak g$. Therefore, $\pi^{-1}(\sigma)$
has constant dimension, so that it is defined by a finite subset of
the equations \eqref{ATeq22}. Hence, the stalk $\pi^{-1}(\sigma)$ is
a closed submanifold of $U$ of dimension $nr-s$.

In such case there is a unique analytic structure on $G$ such that
$\pi\colon U \to G$ is a fiber bundle. Consider $\mathfrak g^r$ the
Lie algebra spanned by cartesian power vector fields,
$$\vec V^r = \vec V^{(1)} + \ldots + \vec V^{(r)},$$
with $\vec V \in \mathfrak g$. This Lie algebra $\mathfrak g^r$ is
canonically isomorphic to $\mathfrak g$. The fields of $\mathfrak g^r$ are
projectable by $\pi$. The Lie algebra $\mathfrak g$ is then
identified with the Lie algebra $\mathcal R(G)$ of right invariant vector fields
in $G$. We deduce that $\mathfrak g$ is the algebra $\mathcal R(G,M)$ of fundamental
fields of $G$ in $M$.

Finally, let us see that the elements of $\mathcal R(G,M)$ span the
Lie-Vessiot-Guldberg algebra of $\vec X$. For all $t_0\in S$ we
have,
$$\varphi(\bar x + \varepsilon (\vec X^r_{t_0})_{\bar x}, \lambda) = x +
(\vec X_{t_0})_x,$$ and then $\vec X_{t_0}$ is the vector field of
$\mathfrak g$ induced by the tangent vector $(\vec X^r_{t_0})_{\bar
x}$ at any $\bar x$ in $U$. Then $\vec X_{t}\in \mathfrak g$ for all
$t\in S$ and therefore there are $\vec V_i\in\mathfrak g$ and
analytic functions $f_i(t)\in \mathcal O_S$ for $i=1,\ldots, s$ such
that,
$$\vec X = \partial + \sum_{i=1}^s f_i(t)\vec V_i,$$
and $\vec X$ is a Lie-Vessiot system in $M$ related to the action of
$G$.

\begin{lemma}\label{lema6.7}
The action of $G$ on $M$ is pretransitive.
\end{lemma}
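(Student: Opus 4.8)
The goal is to show that the group $G$ constructed from a maximal superposition law acts pretransitively on $M$, i.e. that there is some cartesian power $M^\rho$ with an analytic open subset that is a union of principal orbits and whose orbit space is an analytic manifold. The natural candidate for that power is $\rho = r$, and the natural candidate for the open set is $U$ itself, with $W/G$ realized as the image of $\pi \colon U \to G$ glued appropriately; but more precisely I expect one shows $U \subseteq M^r$ is a union of orbits of $G$ on which $G$ acts freely, and that $\pi\colon U \to G$ serves as the quotient map, so $U/G \cong \{\text{pt}\}$ — wait, that is too strong, so instead one should pass to $M^{r'}$ for suitable $r'$. Let me restructure.

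First I would establish that $G$ acts freely on $U$. By the Extension Lemma, for $\bar x, \bar y \in U$ there is a (unique, by local uniqueness of solutions of $\vec X^r$) $\bar z \in U$ with $\bar\varphi(\bar x, \bar z) = \bar y$, and the relation $\sigma_{\bar z} = \sigma_{\bar y}\sigma_{\bar x}^{-1}$ shows the correspondence $\bar z \leftrightarrow (\bar x \mapsto \bar y)$ is the action of $\sigma_{\bar z} \in G$. Thus $G$ acts on $U$ transitively on each fiber-structure and, because $\sigma_{\bar x} = \mathrm{id}_M$ forces $\sigma_{\bar x} = \sigma_{\bar y}$ for the $\bar y$ it is compared with, the stabilizers are trivial: if $\sigma \cdot \bar x = \bar x$ then $\sigma_{\bar x}^{-1}$ composed appropriately is the identity automorphism of $M$, hence $\sigma = e$ in $G$. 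So $U$ is a union of principal orbits of the $G$-action on $M^r$ (each orbit meeting $U$ lies entirely in $U$ again by the Extension Lemma), giving condition (a) of Definition~\ref{C1DEFpretransitive}.

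Next I would identify the orbit space. We have already shown that $\pi\colon U \to G$ is an analytic fiber bundle with fiber the submanifold $\pi^{-1}(\sigma)$ of dimension $nr - s$, and that its fibers are exactly the $G$-orbits in $U$ (two $r$-frames give the same automorphism $\sigma_{\bar x}$ precisely when one is carried to the other by some element of $G$). Therefore $U/G$ is canonically identified with $G$ — or rather with the image $\pi(U) \subseteq G$, which is all of $G$ by construction — and $G$ is by the preceding paragraph of the excerpt an analytic manifold (indeed an analytic Lie group). Hence $U/G \cong G$ is an analytic manifold, which is condition (b). This completes the verification that the action is pretransitive with the open set $U \subseteq M^r$.

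The main obstacle I anticipate is the careful bookkeeping around the identification of $U/G$ with $G$: one must check that the equivalence relation "$\bar x \sim \bar y$ iff $\sigma_{\bar x} = \sigma_{\bar y}$" is exactly the orbit relation of $G$ on $U$, which uses the Extension Lemma in both directions, and that the analytic structure on $G$ making $\pi$ a bundle is the same one under which $G$ acts analytically on $M$ (so that $U \to U/G = G$ is genuinely the analytic quotient map). Both points are essentially contained in the constructions preceding the lemma, so the proof should be short: invoke freeness of the action on $U$, invoke the bundle structure of $\pi\colon U \to G$ whose fibers are the orbits, and conclude that $U$ is a union of principal orbits with analytic quotient $G$, hence the action is pretransitive.
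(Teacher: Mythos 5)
Your verification of condition (a) is fine and matches the paper: the Extension Lemma shows $U$ is a union of $G$-orbits, and the relation $\sigma_{\bar\varphi(\bar x,\bar z)}=\sigma_{\bar x}\circ\sigma_{\bar z}$ coming from the commutative diagram in that lemma forces trivial isotropy, so these orbits are principal. The error is in your identification of the orbit space. The fibers of $\pi\colon U\to G$, $\bar x\mapsto \sigma_{\bar x}$, are \emph{not} the $G$-orbits. The element $\sigma_{\bar x}\in G$ acts on $\bar z\in U\subset M^r$ diagonally, by $\bar z\mapsto\bar\varphi(\bar x,\bar z)$, and the same relation $\sigma_{\bar\varphi(\bar x,\bar z)}=\sigma_{\bar x}\circ\sigma_{\bar z}$ says that $\pi$ intertwines this action with translation on $G$; hence each orbit is mapped by $\pi$ \emph{bijectively onto all of} $G$, meeting every fiber of $\pi$ exactly once, rather than lying inside a single fiber. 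Your parenthetical claim that two $r$-frames give the same automorphism precisely when one is carried to the other by an element of $G$ is the opposite of what happens: by freeness, the only group element carrying a point of a fiber of $\pi$ to another point of the same fiber is the identity. A dimension count confirms the mismatch: orbits have dimension $s=\dim G$, while the fibers of $\pi$ have dimension $nr-s$, so they cannot coincide in general.

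Consequently $U/G$ is not $G$; it is the fiber $U_0=\pi^{-1}(\mathrm{Id})$. This is exactly how the paper concludes: the map $\pi_2\colon U\to U$, $\bar x\mapsto \sigma_{\bar x}^{-1}\bar x$, is constant on orbits (since $\pi_2(\sigma\cdot\bar x)=(\sigma\circ\sigma_{\bar x})^{-1}(\sigma\cdot\bar x)=\sigma_{\bar x}^{-1}\bar x$), has image $U_0$, and together with $\pi$ yields a decomposition $U\simeq U_0\times G$ in which the orbits are the slices $\{\bar y\}\times G$. Therefore $U/G\simeq U_0$ is a complex analytic manifold, which is condition (b). Your overall strategy --- Extension Lemma for (a), the bundle $\pi\colon U\to G$ for (b) --- is the right one, but you quotient by the wrong factor of the product; as written, the second half of your argument does not establish that the orbit space is a manifold.
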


\begin{proof}
First, by Lemma \ref{LMextensionlemma} the open subset $U$ is union
of principal orbits. Let us prove that the space of orbits $U/G$ in
a complex analytic manifold. Consider $\pi\colon U \to G$, $\bar
x\mapsto \sigma_{\bar x}$ as in the previous lemma. Let $U_0$ be
preimage of $Id$, which is a closed submanifold of $U$. Consider the
map,
$$\pi_2\colon U\to U, \quad \bar x \mapsto \sigma_{\bar
x}^{-1}\bar x,$$ then $\pi(\pi_2(\bar x)) = Id$, and the image of
$\pi_2$ is $U_0$. The two projections  $\pi\colon U \to G$ and
$\pi_2\colon U\to U_0$ give a decomposition $U = U_0\times G$ and
then the quotient $U/G\simeq U_0$ is a complex analytic manifold. We
conclude that the action of $G$ on $M$ is pretransitive.
\end{proof}

We conclude that $\vec X$ is a Lie-Vessiot system associated to the
pretransitive action of $G$ on $M$, this ends the proof of theorem
\ref{T1}.

\begin{example} \label{ExampleWeierstrass}
Let us consider Weierstrass equation for the $\wp$ function,
$$\dot\wp^2 = 4(\wp^3+g_2\wp+g_3),$$
the classical addition formula for Weierstrass equation,
$$\wp(a+b) = - \wp(a) - \wp(b) - \frac{1}{4}\frac{\dot\wp(a)-\dot\wp(b)}{\wp(a)-\wp(b)}$$
can be understood as a superposition law for the differential equation,
\begin{equation}\label{nonautonomousWeierstrass}
\dot x = 2\sqrt{f(t)(x^ 3+g_2x+g_3)}.
\end{equation}
The general solution of \ref{nonautonomousWeierstrass} is:
\begin{equation}\label{SolutionW}
x = \wp\left(\int_{t_0}^xf(\xi)d\xi + \lambda \right).
\end{equation}
By taking off the constant $\lambda$ of the formula \ref{SolutionW} we obtain the metioned
superposition law,
$$\varphi(x,\lambda) = -x -\lambda - \frac{1}{2}\frac{\sqrt{x^3-g_2x-g_3}-\sqrt{\lambda^3-g_2\lambda-g_3}}{x-\lambda}.$$
Where the square roots are defined in a doble-sheet ramified covering $\mathbb C$. 
If we apply Vessiot global method here, it is clear that we recover the elliptic curve group structure.
However, infinitesimal Lie approach gives us no information about
this non-linear algebraic group, but just about its Lie algebra. Using Lie's infinitesimal approach,
equation \ref{nonautonomousWeierstrass} is just equivalent to,
$$\dot x = f(t).$$
This example can be generalized to addition formulas of abelian functions in several variables.
In those cases, non-linear and non-globally linearizable superposition laws appear.
\end{example}

\section{Rational Superposition Laws}

  Using this global philosophy it is easy to jump into the
algebraic category. First, is follows easly that rational
superposition laws lead to rational actions of algebraic groups.
From now on let $M$ be an algebraic variety. 

\begin{theorem}\label{T2}
Let $\vec X$ be a non-autonomous meromorphic vector field
in an algebraic manifold $M$ that admits a rational superposition law. Then,
it is a Lie-Vessiot system related to a algebraic action of an algebraic
group if $M$.
\end{theorem}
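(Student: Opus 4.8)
The plan is to transport the proof of Theorem \ref{T1} into the algebraic
category, taking advantage of the fact that a rational superposition law $\varphi\colon
U\times M \to M$ already lives in the algebraic world. First I would fix a maximal
rational superposition law (with $U$ now a Zariski-open subset of $M^r$, maximality
taken in the sense of rational extensions, again guaranteed by Zorn's lemma since the
extension formula \eqref{ATeq21} is itself rational). Exactly as in the analytic case,
for each $\bar x\in U$ the map $\sigma_{\bar x}\colon x\mapsto \varphi(\bar x,x)$ is a
birational transformation of $M$, and the Extension Lemma (Lemma \ref{LMextensionlemma})
goes through verbatim with \emph{analytic open} replaced by \emph{Zariski open},
because its proof only uses the composition identity $\sigma_{\bar z}=\sigma_{\bar
y}\sigma_{\bar x}^{-1}$ and the maximality of $\varphi$. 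Hence $G=\{\sigma_{\bar
x}\mid \bar x\in U\}$ is a group of birational transformations of $M$, and the
parametrization $\pi\colon U\to G$, $\bar x\mapsto \sigma_{\bar x}$, is a rational map.

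Next I would reconstruct the group structure on $G$ algebraically. The linear space
$\mathfrak g$ of vector fields in $M$ obtained by projecting tangent vectors of $U$
through $\varphi$ is now a space of \emph{rational} vector fields, still of dimension
$s\le nr$; the argument that $\mathfrak g$ is a finite-dimensional Lie algebra
(via the local superposition $\Psi=\psi^*\mathcal O_M$ and the proof of Theorem
\ref{C1THElocallie}) is insensitive to the algebraic-versus-analytic distinction, since
the coefficients $\lambda_{ij}^k$ are forced to be constants by the same linear-algebra
computation \eqref{ATeq13}. The fibres $\pi^{-1}(\sigma)$ are cut out by the algebraic
equations \eqref{ATeq22}, so they are Zariski-closed of constant codimension $s$ in
$U$; this endows $G$ with the structure of an algebraic variety for which $\pi$ is a
morphism, and the group operations on $G$ are rational because they are read off from
$\bar\varphi$ and the partial inverse $\psi$, both rational. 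Thus $G$ is an algebraic
group (after passing, if necessary, to the Zariski-closure of the image of $\pi$, which
is a group by the standard fact that the closure of a subgroup of an algebraic group is
an algebraic subgroup), the action $G\times M\dashrightarrow M$, $(\sigma_{\bar
x},y)\mapsto\varphi(\bar x,y)$ is rational, $\mathfrak g=\mathcal R(G,M)$, and the
expression $\vec X=\partial+\sum f_i(t)\vec V_i$ with $\vec V_i\in\mathfrak g$ exhibits
$\vec X$ as a Lie-Vessiot system relative to this action.

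The main obstacle I anticipate is bookkeeping with \emph{rational} rather than
\emph{regular} objects: all the maps $\varphi$, $\psi$, $\sigma_{\bar x}$, $\pi$ are
only defined off proper closed subsets, so one must be careful that the domains of
definition intersect the relevant open sets densely and that identities proved on dense
opens are genuine identities of rational maps. Concretely, one should check that the
birational transformations $\sigma_{\bar x}$ really do compose as a group (they are
birational self-maps of $M$, and composition of dominant rational maps is associative),
and that the parametrized family $\{\sigma_{\bar x}\}$ depends rationally on $\bar x$ in
a way that makes $G$ a variety — this is precisely where the constancy of the fibre
dimension, hence the finiteness of the defining system \eqref{ATeq22}, does the work.
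Once the generic structure is in place, a routine invocation of Chevalley's theorem on
constructible images and the fact that a group variety containing a dense constructible
subgroup is itself the Zariski-closure finishes the identification of $G$ as an
algebraic group; I would treat that last step briefly, referring to the standard theory
of algebraic groups.
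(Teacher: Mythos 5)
Your overall strategy---transport the proof of Theorem \ref{T1} into the algebraic category---is workable in outline, and several of your ingredients (the Zariski-open version of the Extension Lemma \ref{LMextensionlemma}, the constancy of the $\lambda_{ij}^k$, Chevalley--Weil constructibility of images of rational maps) are exactly the ones the paper uses. But note that the paper does not redo Theorem \ref{T1}: a rational superposition law is in particular an analytic one, so Theorem \ref{T1} already supplies the complex analytic Lie group $G$, the decomposition $U\simeq G\times U_0$ of Lemma \ref{lema6.7}, and the Lie--Vessiot structure of $\vec X$. The only thing left to prove is that $G$ and its action are algebraic, and that is where your argument has a genuine gap.

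You write that, since the fibres $\pi^{-1}(\sigma)$ are Zariski-closed of constant codimension $s$, ``this endows $G$ with the structure of an algebraic variety for which $\pi$ is a morphism.'' That inference is not valid in the algebraic category: a partition of $U$ into closed subvarieties of constant codimension does not by itself make the set of fibres an algebraic variety (algebraic quotients require either a genuine quotient construction or a concrete realization of the quotient inside a variety). Your subsequent appeal to ``the Zariski closure of a dense constructible subgroup'' is circular at this point, because $G$ is so far only a group of birational transformations of $M$ and has not been placed inside any algebraic variety or algebraic group to which such facts could apply. The paper fills precisely this hole with one concrete device: fix $\bar\lambda$ in the identity fibre $U_0$ and identify $G$ with the orbit $\{\bar\varphi(\bar x,\bar\lambda)\,:\,\bar x\in U\}\subset M^r$, that is, with the image of the rational map $\bar x\mapsto\bar\varphi(\bar x,\bar\lambda)$. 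That image is constructible by Weil's projection theorem, and it is analytically isomorphic to the Lie group $G$ furnished by Theorem \ref{T1}, whence it is an algebraic submanifold of $U$; the group law and the action are then restrictions of the rational maps $\bar\varphi$ and $\varphi$, hence algebraic. If you add this embedding step (or an equivalent realization of $G$ inside an algebraic variety), your argument closes.
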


\begin{proof}
  First, note that the extension lemma \ref{LMextensionlemma} also works in the algebraic case.
So that we can assume that the superposition law
$$\varphi \colon U \times M \to M$$
is defined in a Zariski open subset $U\subset M^r$ such that, if $\bar x, \bar y$
are in $U$ and there exist $\bar z$ such that $\bar \varphi(\bar x, \bar z) = \bar y$
then $\bar z$ is also in $U$.
Then, by Lemma \ref{lema6.7} we have a decomposition,
$U = G \times U_0$ being $U_0$ the fiber of the identity in $U$. Let us see that
we can indentify $G$ with an algebraic submanifold of $U$. Let us fix $\bar\lambda\in U_0$,
then $G$ is identified with the set $\varphi(\bar x, \bar \lambda)$ where $x$ varies in $U$,
which is the image of the map:
$$\psi\colon U \to U, \quad \psi(\bar x) = \varphi(\bar x, \bar \lambda),$$
This map $\psi$ is clearly rational. The image of a rational map is, by \emph{Weil's projection
theorem}, boolean combination of algebraic subsets of $U$. But, in the other hand,
it has been proved that this image is analiticaly isomorphic to $G$. Then, it follows
that it embedding of $G$ into $U$ whose image is an algebraic submanifold of $U$. Then,
$G$ inherits this algebraic structure. It is clear that the composition law and the action are then
algebraic.
\end{proof}

\section{Strongly Normal Extensions}

From now on, let $\vec X$ be a non-autonomous \emph{meromorphic} vector field
in an algebraic manifold $M$ with coefficients in the Riemann surface 
$S$ that admits a rational superposition law. Applying Theorem \ref{T2}, we
also  consider the algebraic group $G$ that realizes $\vec X$ as 
an algebraic Lie-Vessiot system.

The objective of this section is to prove that a rational differential equation
with meromorphic coefficients in a Riemann surface $S$, admitting a rational superposition law,
have its general solution in a differential field extension of the field
of meromorphic functions in $S$ which is an \emph{strongly normal extension}
in the sense of Kolchin. Then, superposition laws fall into the differential
Galois theory developed in \cite{Kolchin1973}.

  The field $\mathcal M(S)$ of meromorphic functions in $S$ is a differential
field, endowed of the derivation $\partial$, its field of constants if clearly
$\mathbb C$. For a differential field $K$ we denote its field of constants by $C_K$. 
Let $K$, $L$ be two differential fields. By a differential field
morphism we mean a field morphism $\varphi\colon K \to L$ which commutes with
the derivation. When we fix a non-trivial (and then injective) differential field morphism,
$K\subset L$ we speak of a differential field extension. By a $K$-isomorphism of $L$
we mean a differential field morphism from $L$ into a differential field extension
of $K$ that fix $K$ pointwise. We will denote by ${\rm Aut}_K(L)$ the set of differential 
automorphisms of $L$ that fix $K$ pointwise.

 The following lema tell us that the vector field $X$ can not have movable
singularities.

\begin{lemma}
There is a discrete set $R$ in $S$ such that $\vec X$ is holomorphic in $M\times S^\times$
being $S^\times = S\setminus R$. 
$$\vec X = \delta + \sum_{i=1}^sf_i(t)\vec X_i,$$
being the $f_i(t)$ meromorphic functions in $S$ with poles in $R$. 
\end{lemma}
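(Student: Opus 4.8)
The plan is to reduce the \emph{a priori} uncontrolled polar set of $\vec X$ on $S\times M$ to the poles of finitely many meromorphic functions of $t$ alone, by means of Theorem~\ref{T2}. First I would apply Theorem~\ref{T2} to realize $\vec X$ as a Lie-Vessiot system relative to an \emph{algebraic} action $a\colon G\times M\to M$ of an algebraic group $G$. Since $a$ is a morphism of algebraic varieties, defined on all of $G\times M$, each fundamental vector field $\vec A^{M}$ (for $\vec A\in\mathcal R(G)$) is a regular algebraic vector field on $M$, hence holomorphic everywhere on $M$. I would then fix a basis $\vec X_1,\ldots,\vec X_s$ of the Lie algebra $\mathcal R(G,M)$ of fundamental fields, so that all the $\vec X_i$ are holomorphic on $M$.

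Next, since $\vec X$ is a Lie-Vessiot system, for every $t\in S$ the vector field $\vec X_t$ lies in $\mathfrak g(\vec X)\subseteq\mathcal R(G,M)$, so there are uniquely determined complex numbers $f_i(t)$ with $\vec X_t=\sum_{i=1}^s f_i(t)\vec X_i$, that is, $\vec X=\partial+\sum_{i=1}^s f_i(t)\vec X_i$. The essential step is to check that the functions $f_i$ are meromorphic on $S$. This is precisely the computation sketched in the remark following the definition of the enveloping algebra in Section~2, run now in the meromorphic category: pick $r$ and an $r$-frame $\bar p=(p^{(1)},\ldots,p^{(r)})\in M^r$ at which the lifted fields $\vec X_1^{r},\ldots,\vec X_s^{r}$ are $\mathbb C$-linearly independent (possible, by the lifting lemma used in the proof of Theorem~\ref{C1THElocallie}, because the $\vec X_i$ are $\mathbb C$-linearly independent), chosen moreover so that no slice $S\times\{p^{(a)}\}$ lies in the polar divisor of $\vec X$. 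In local algebraic coordinates around the $p^{(a)}$, the components of $\vec X$ restricted to these slices are meromorphic functions of $t$, the components of the $\vec X_i$ are constants, and the resulting linear system for the $f_i$ has a coefficient matrix of maximal rank $s$; Cramer's rule then exhibits each $f_i$ as a quotient of meromorphic functions of $t$, hence as a meromorphic function on $S$ (the outcome being independent of $\bar p$ by uniqueness of the coefficients).

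Finally I would set $R=\bigcup_{i=1}^s\{\text{poles of }f_i\}$. On the Riemann surface $S$ the pole set of a meromorphic function is discrete, and a finite union of discrete sets is discrete, so $R$ is discrete and each $f_i$ has its poles contained in $R$. Over $S^{\times}=S\setminus R$ every $f_i$ is holomorphic and every $\vec X_i$ is holomorphic on $M$, so $\vec X=\partial+\sum_{i=1}^s f_i(t)\vec X_i$ is holomorphic on $M\times S^{\times}$, which is the assertion.

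The only delicate point is the meromorphy of the $f_i$: one must choose the frame $\bar p$ both away from the polar divisor of $\vec X$ (so that restriction to the slices $S\times\{p^{(a)}\}$ genuinely produces meromorphic functions of $t$) and so that the $s\times(nr)$ matrix of components of the $\vec X_i$ at the $p^{(a)}$ has full rank $s$; the existence of such a frame is where the finiteness of $\mathcal R(G,M)$ and the lifting lemma of Section~4 are used. The rest is immediate: the passage to an algebraic group action is Theorem~\ref{T2}, and discreteness of the pole set is automatic on a Riemann surface.
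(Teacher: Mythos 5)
Your argument is correct, but it is not the route the paper takes. The paper's proof never invokes Theorem~\ref{T2}: it works directly with the superposition law, observing that if $\vec X$ is defined at a single point of a fibre $t=t_0$ then one can build a local fundamental system of solutions near $t_0$, and the formula $\varphi(x_1(t),\dots,x_r(t),\lambda)$ then produces the solution through \emph{every} initial condition $x_0\in M$; hence the fibre $t=t_0$ meets the singular locus not at all, the polar set of $\vec X$ is a union of whole fibres of $S\times M\to S$, and these are isolated because $\vec X$ is meromorphic. This is exactly the ``no movable singularities'' phenomenon announced just before the lemma, obtained with no machinery beyond the existence of the superposition law. You instead pass through Theorem~\ref{T2}, use that the fundamental fields of an algebraic action are regular (hence holomorphic on all of $M$), and extract the meromorphy of the coefficients $f_i$ by restricting $\vec X$ to slices $S\times\{p^{(a)}\}$ over a frame where the lifted fields $\vec X_1^r,\dots,\vec X_s^r$ are independent, solving the resulting constant-coefficient linear system. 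The trade-off is clear: your proof is heavier (it needs the full algebraic structure and the lifting lemma of Section~4), but it actually establishes the displayed decomposition $\vec X=\partial+\sum_i f_i(t)\vec X_i$ with $f_i$ meromorphic and poles in $R$, which the paper's proof asserts but does not verify explicitly; the paper's proof is more elementary and makes the geometric mechanism transparent, but only really addresses the discreteness of $R$. The one point you should state a little more carefully is that $f_i(t)$ is only defined a priori for $t$ outside the singular set (where $\vec X_t$ exists as a vector field); your Cramer's-rule computation is what extends it meromorphically, so the phrase ``for every $t\in S$ there are uniquely determined complex numbers $f_i(t)$'' should be read as holding generically and then continued.
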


\begin{proof}
Let $t_0$ be a point of $S$ such that $\vec X$ is defined at least at one point of the fiber $t=t_0$
in $M\times S$. Then, it is defined in at least an open subset of the fiber $t=t_0$. It implies
that there exist then $r$ local solutions $x_1(t), \ldots, x_r(t)$, defined in an in $S'\subset S$ around $t_0$ 
that form a fundamental system of solutions. Then, by the superposition law, its follows that for any particular
initial condition $x_0\in M$ there exist $\lambda\in M$ such that $\varphi(x_1(t),\ldots,x_r(t),\lambda)$ is
the solution of $\vec X$ with initial condition $x(t_0) = x_0$. It follows that the fiber $t=t_0$ does not
meet the singular locus of $\vec X$.

Thus, the singular locus of $\vec X$ should be an union of fibers of the proyection $S\times M \to S$. Those
fibers should be isolated, because the vector field $\vec X$ is meromorphic. 
\end{proof}

  From now on we consider $S^\times$ as above, and we set $\widetilde{S^\times}$ to be the
universal covering of $S^\times$.

\begin{lemma}
Any germ of solution $x(t)$ defined in
$S'\subset  S^\times$ can be prolonged into a solution defined in the universal
covering $\widetilde{S^\times}$ of $S^\times$.
\end{lemma}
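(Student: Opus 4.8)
The plan is to reduce the statement to the automorphic system on $G$, where the solutions of a Lie--Vessiot equation form a right $G$-torsor and therefore cannot develop movable singularities. Let $x(t)$ be the given germ of solution at a point $t_0\in S'\subseteq S^\times$, and put $x_0=x(t_0)$. Since the coefficients $f_i(t)$ of the automorphic system $\vec A=\partial+\sum_i f_i(t)\vec A_i$ (Definition \ref{automorphic}) are holomorphic at $t_0\in S^\times$, local existence and uniqueness provides a germ $\sigma(t)$ of solution of $\vec A$ with $\sigma(t_0)=e$. By Proposition \ref{automorphicproperties}(i), $t\mapsto\sigma(t)\cdot x_0$ is a solution of $\vec X$ taking the value $x_0$ at $t_0$, so uniqueness of solutions gives $x(t)=\sigma(t)\cdot x_0$ as germs. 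Hence it suffices to prolong $\sigma$ to $\widetilde{S^\times}$; the prolongation of $x$ is then $t\mapsto\sigma(t)\cdot x_0$.

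First I would show that $\sigma$ continues analytically along every path $\gamma\colon[0,1]\to S^\times$ with $\gamma(0)=t_0$. The set of $s\in[0,1]$ for which the continuation exists along $\gamma|_{[0,s]}$ is nonempty and open by local existence; the point is that it is also closed. Let $t_1=\gamma(s_0)$ be a limit point, and let $\rho$ be the solution of $\vec A$ with $\rho(t_1)=e$, defined on some connected neighbourhood $W$ of $t_1$ (available since $t_1\in S^\times$). For $s<s_0$ sufficiently close to $s_0$ we have $\gamma(s)\in W$, and on a small disc around $\gamma(s)$ both the already-continued $\sigma$ and $\rho$ are honest solutions of $\vec A$; by Proposition \ref{automorphicproperties}(iv) they differ by a right translation, $\sigma(t)=\rho(t)\lambda$ for a constant $\lambda\in G$. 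Now $t\mapsto\rho(t)\lambda$ is a solution of $\vec A$ on all of $W$: right translation by $\lambda$ carries solutions of $\vec A$ to solutions, because each $\vec A_t=\sum_i f_i(t)\vec A_i$ is a right-invariant vector field on $G$ and hence commutes with $R_\lambda$. Thus $\rho(\cdot)\lambda$ prolongs $\sigma$ across $t_1$, proving closedness; in particular the continuation stays inside $G$ and never escapes.

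Finally, since $\sigma$ continues along every path in $S^\times$ and $\widetilde{S^\times}$ is simply connected, the monodromy theorem yields a single-valued holomorphic map $\tilde\sigma\colon\widetilde{S^\times}\to G$ whose germ at the base point over $t_0$ is $\sigma$; the required agreement of continuations along homotopic paths is immediate, as two nearby continuations are related by a right translation by an element that, being determined by the local data, is locally constant along a homotopy. Pulling $\vec A$ back along the covering map $\widetilde{S^\times}\to S^\times$ (the $f_i$ pull back to holomorphic functions), $\tilde\sigma$ is a global solution of the pulled-back automorphic system, and $t\mapsto\tilde\sigma(t)\cdot x_0$ is the desired prolongation of $x$ to $\widetilde{S^\times}$. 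The only substantive step is the closedness of the continuation set: this is exactly where the group structure is used to rule out movable singularities, via Proposition \ref{automorphicproperties}(iv) together with the right-invariance of the fields $\vec A_i$; the remainder is the standard open--closed and monodromy argument.
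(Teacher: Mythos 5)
Your proof is correct, but it runs along a different track than the paper's. The paper stays entirely on the phase space $M$: it covers the path $\gamma$ by finitely many simply connected neighbourhoods $U_1,\dots,U_n$ in $S^\times$ on each of which a fundamental system of solutions $x_1^{\bar t},\dots,x_r^{\bar t}$ of $\vec X$ is defined, and then continues the given solution from one $U_i$ to the next by writing it as $\varphi(x_1^{\bar t}(t),\dots,x_r^{\bar t}(t);\lambda)$ — the superposition law itself supplies the general solution on all of $U_i$ once a fundamental system is available there. You instead lift the problem to the group: you write $x(t)=\sigma(t)\cdot x_0$ for a solution $\sigma$ of the automorphic system $\vec A$, and prolong $\sigma$ by the open--closed argument, using Proposition \ref{automorphicproperties}(iv) and the right-invariance of the $\vec A_i$ to glue the continued $\sigma$ to a right translate $\rho(\cdot)\lambda$ of a fresh local solution near the limit point. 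The two arguments exploit the same underlying fact (the solutions form a globally parametrized family, so there are no movable singularities on $S^\times$), but yours makes the mechanism explicit and self-contained: the paper's step asserting that a fundamental system exists on each whole $U_{\bar t}$ is left without justification (a priori the fundamental solutions themselves are only germs), whereas your closedness argument needs only local existence at each point of $S^\times$ plus the torsor structure of the solution set of $\vec A$, which is already established in Proposition \ref{automorphicproperties}. The price is that you invoke the group $G$ and the automorphic system, which are available at this point of the paper (after Theorem \ref{T2}), so nothing is circular. Both proofs finish with the monodromy theorem in the same way.
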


\begin{proof}
It is enough to see that any solution can be analitically prolongued along 
any diffentiable path. Let $\gamma\colon [0,1]\to S^\times$ be a differentiable
path. For any point $\bar t$ in the image of $\gamma$ we consider a simply connected
open neighborhood $U_{\bar t}$ such that there exist a fundamental system of solutions 
$x_1^{\bar t},\ldots x_r^{\bar t}$ defined in $U_{\bar t}$. Now, we apply that the
interval $[0,1]$ is compact, so we can take just a finite number of open subsets
$U_1,\ldots, U_n$. In each one of this open subset the general solution exist for
any initial condition. So that, we can continue any solution defined around $\gamma(0)$
along $\gamma$ to $\gamma(1)$. 
\end{proof}

Let $x(t)$ be a particular solution of $\vec X$ defined in $\widetilde{S^\times}$. Then,
we can consider $\mathcal M(S)\subset \mathcal M(S)\langle x(t) \rangle$, the differential field
extension spanned by the coordinates of $x(t)$ in any affine chart. 

\begin{lemma}\label{uniqueGal}
Let $ \bar x(t) = (x_1(t),\ldots,x_r(t))$ be a fundamental system of solutions of $\vec X$, and $\sigma(t)$ a solution
of the associated automorphic system $\vec A$. Then  $\mathcal M(S)\langle \bar x(t)\rangle = \mathcal M(S)\langle\sigma(t)\rangle$.
\end{lemma}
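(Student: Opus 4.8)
The plan is to compare a fundamental system $\bar x(t)$ of $\vec X$ with the automorphic solution $\sigma(t)$ through the automorphism $\sigma_{\bar x(t)}\colon y\mapsto\varphi(\bar x(t),y)$, showing that each of $\bar x(t)$ and $\sigma(t)$ is obtained from the other by a passage that is algebraic over the constant field $\mathbb C$, so that no transcendental constant is introduced on either side. For the dictionary I use Theorem \ref{T2}: $G$ is an algebraic submanifold of $M^r$, the automorphism $\sigma_{\bar x}\in G$ being identified with the point $\bar\varphi(\bar x,\bar\lambda_0)\in M^r$ for a fixed $\mathbb C$-point $\bar\lambda_0\in U_0$; hence the map $\nu\colon U\to G\subset M^r$, $\bar x\mapsto\sigma_{\bar x}$, is rational over $\mathbb C$ (as $\bar\varphi$ is the cartesian power of the rational map $\varphi$), and the action of $G$ on $M$ and on $M^r$, together with the multiplication and inversion of $G$, are algebraic over $\mathbb C$. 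I fix a base point $t_0$ at which none of the meromorphic objects in play has a pole, so that $\bar x(t_0)\in M^r(\mathbb C)$ and $\sigma_{\bar x(t_0)}=\bar\varphi(\bar x(t_0),\bar\lambda_0)\in G(\mathbb C)$.

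First I show $\mathcal M(S)\langle\sigma(t)\rangle\subseteq\mathcal M(S)\langle\bar x(t)\rangle$. Since $\bar x(t)$ is a fundamental system, $t\mapsto\varphi(\bar x(t),y)=\sigma_{\bar x(t)}\cdot y$ is, for each fixed $y$, the general solution of $\vec X$; by Proposition \ref{automorphicproperties}(i) so is $t\mapsto\sigma(t)\cdot y$. Setting $\tau_0:=\sigma(t_0)^{-1}\sigma_{\bar x(t_0)}\in G(\mathbb C)$, the two curves $\varphi(\bar x(t),y)$ and $\sigma(t)\cdot(\tau_0\cdot y)$ are both solutions of $\vec X$ and agree at $t_0$, so by uniqueness of solutions $\varphi(\bar x(t),y)=\sigma(t)\cdot(\tau_0\cdot y)=(\sigma(t)\tau_0)\cdot y$ for all $t$ and $y$; since the action is faithful, $\sigma_{\bar x(t)}=\sigma(t)\tau_0$ in $G$. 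Therefore $\sigma(t)=\bar\varphi(\bar x(t),\bar\lambda_0)\cdot\tau_0^{-1}$, and right translation by $\tau_0^{-1}$ being algebraic over $\mathbb C$, the coordinates of $\sigma(t)$ lie in $\mathcal M(S)\langle\bar x(t)\rangle$.

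For the reverse inclusion I recover $\bar x(t)$ from $\sigma_{\bar x(t)}$. Applied to $\bar x(t)$, the superposition law expresses every solution of $\vec X$ as $t\mapsto\varphi(\bar x(t),\mu)$ for a constant $\mu\in M$; taking this solution to be each of the components $x^{(1)}(t),\ldots,x^{(r)}(t)$ of $\bar x(t)$ in turn, we find $\sigma_{\bar x(t)}^{-1}\cdot x^{(k)}(t)$ to be independent of $t$, so $\sigma_{\bar x(t)}^{-1}\cdot\bar x(t)=\bar e_0$ is a constant point of $M^r$, namely $\sigma_{\bar x(t_0)}^{-1}\cdot\bar x(t_0)\in M^r(\mathbb C)$. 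Thus $\bar x(t)=\sigma_{\bar x(t)}\cdot\bar e_0=(\sigma(t)\tau_0)\cdot\bar e_0$, and since the $G$-action is algebraic over $\mathbb C$ the coordinates of $\bar x(t)$ are algebraic over $\mathbb C$ in those of $\sigma(t)$. Hence $\mathcal M(S)\langle\bar x(t)\rangle\subseteq\mathcal M(S)\langle\sigma(t)\rangle$, and the two differential fields coincide.

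The one step that needs genuine care is the control of constants: everything rests on the passages $\bar x(t)\leftrightarrow\sigma_{\bar x(t)}$ being algebraic over $\mathbb C$, which is precisely what the rationality of $\varphi$ and Theorem \ref{T2} supply (the embedding $G\hookrightarrow M^r$ and the $G$-action algebraic over the constants), and on $t_0$ being chosen so that $\tau_0$, $\bar\lambda_0$ and $\bar e_0$ are honest $\mathbb C$-points. The group-theoretic bookkeeping — that $\sigma_{\bar x(t)}$ and $\sigma(t)$ differ by a fixed element of $G$, and that $\bar x(t)$ is recovered by acting on the fixed frame $\bar e_0$ — is then routine.
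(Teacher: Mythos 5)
Your proof is correct and follows essentially the same route as the paper's: both inclusions come from the same dictionary, namely the rational projection $\bar x\mapsto\sigma_{\bar x}\in G\subset M^r$ sending the frame to a solution of $\vec A$ (giving $\mathcal M(S)\langle\sigma(t)\rangle\subset\mathcal M(S)\langle\bar x(t)\rangle$ after correcting by a constant right translation, which the paper gets from Proposition \ref{automorphicproperties} and you re-derive by uniqueness of solutions), and the constancy of $\sigma_{\bar x(t)}^{-1}\cdot\bar x(t)$ giving $\bar x(t)=\sigma(t)\cdot\bar\lambda$ for the reverse inclusion. You simply spell out the control of constants that the paper leaves implicit.
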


\begin{proof}
First, there exist a $r$-uple $\bar \lambda$ in $U$ such that $\bar x(t) = \sigma(t)\cdot \bar \lambda$. 
Then, it is clear that the coordinates of $\bar x(t)$ are rational functions on the coordinaes of $\sigma(t)$, 
i. e. $\mathcal M(S)\langle \bar x(t)\rangle  \subset 
\mathcal M(S)\langle\sigma(t)\rangle$.
The converse follow from the fact that there is a surjective morphism $\pi\colon U \to G$ that sends $\bar x(t)$ to
a solution $\tau(t)$ of $\vec A$. Then the coordinates of $\tau(t)$ are, by this proyection, rational
funcions of the coordinates of $\bar x(t)$ and we have that $\mathcal M(S)\langle \tau(t)\rangle  \subset 
\mathcal M(S)\langle x(t) \rangle$. Finally, by Proposition \ref{automorphicproperties} (iii), we know
that $\tau(t)$ and $\sigma(t)$ are related by a left translation in $G$, so that $\mathcal M(S)\langle \sigma(t) \rangle$
coincides witn $\mathcal M(S)\langle \tau(t) \rangle$. 
\end{proof}

By the above Lemma \ref{uniqueGal} any fundamental system of solutions of $\vec X$, or equivalenty
any solution of $\vec A$ span the same differential field extension of $\mathcal M(S)$. Then, we
can define:

\begin{Definition}
We call the Galois differential field of $\vec X$ to the differential
field $\mathcal M(S)\langle \bar x(t)\rangle$
spanned by the coordinates of a fundamental system of solutions $\bar x(t)$, defined in $\widetilde{S^\times}$. 
\end{Definition}

From now on we denote by $L$ to the Galois differential field of $\vec X$. We have 
that $\mathcal M(S) \subset L \subset \mathcal M(\widetilde{S^\times})$. It is clear
that if $x(t)$ is a particular solution of $\vec X$ then $\mathcal M(S)\langle x(t)\rangle$
is contained in $L$.

\begin{Definition}
  Let $K$ be a differential field whose field of constants if $\mathbb C$. A strongly 
  normal extension is a differential field extension $K\subset L$ such that,
  \begin{itemize}
  \item[(i)] the field of constants of $L$ is $\mathbb C$,
  \item[(ii)] for any $K$-isomorphism $\sigma\colon L \to \mathcal U$ into a differential
  field extension $\mathcal U$ of $L$ we have that $L\cdot \sigma L \subset L\cdot C_{\mathcal U}$.
  \end{itemize}
\end{Definition}

\begin{theorem}\label{T3}
Let $\vec X$ be a non-autonomous meromorphic vector field in an algebraic manifold $M$
with coefficients in the Riemann surface $S$ that admits a rational superposition law.
Let $L$ be the Galois differenial field of $\vec X$. Then,
$\mathcal M(S) \subset L$ is a strongly normal extension in the sense of Kolchin.
Moreover, 
each particular solution $\sigma(t)$ of the associated automorphic system induce an
injective map,
$${\rm Aut}_{\mathcal M(S)}(L) \to G.$$
which is an anti-morphism of groups.
\end{theorem}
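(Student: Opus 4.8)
The plan is to use Theorem \ref{T2} to realize $\vec X$ as an algebraic Lie-Vessiot system attached to an algebraic group $G$, and then exploit the automorphic system $\vec A$ in $G$ as the bridge to Kolchin's theory. Fix a fundamental system of solutions $\bar x(t)$ of $\vec X$ defined on $\widetilde{S^\times}$ and, via the trivialization $\pi\colon U\to G$, a corresponding solution $\sigma(t)$ of $\vec A$; by Lemma \ref{uniqueGal} we have $L = \mathcal M(S)\langle\sigma(t)\rangle$. The first step is to verify condition (i) of the definition of strongly normal extension, namely that the field of constants of $L$ is $\mathbb C$. This follows because $L\subset\mathcal M(\widetilde{S^\times})$ and the only meromorphic functions on $\widetilde{S^\times}$ killed by $\partial$ are the constants; the lifted derivation on $\widetilde{S^\times}$ has no extra constants since $\widetilde{S^\times}\to S^\times\hookrightarrow S$ and $\partial$ is the pullback of the derivation on $S$, whose constant field is $\mathbb C$.

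Next I would establish the key computation behind condition (ii) and the group anti-morphism simultaneously. Let $\sigma\colon L\to\mathcal U$ be an $\mathcal M(S)$-isomorphism into a differential extension $\mathcal U$ of $L$. Apply $\sigma$ coordinatewise to the solution $\sigma(t)$ of $\vec A$: since the coefficients $f_i(t)$ of $\vec A$ lie in $\mathcal M(S)$ and are fixed by $\sigma$, and since $\vec A$ is defined by polynomial equations over $\mathcal M(S)$ in the coordinates of $G$, the image $\sigma(\sigma(t))$ is again a solution of $\vec A$, now with entries in $\mathcal U$. Two solutions of an automorphic system differ on the left by a constant element of the group: this is exactly Proposition \ref{automorphicproperties}(iv), which gives $\sigma(t)^{-1}\cdot\sigma(\sigma(t)) = c_\sigma \in G(C_{\mathcal U})$ — here one must check that (iv), proved there over $\mathbb C$, goes through verbatim with the constant field of $\mathcal U$ in place of $\mathbb C$, the argument being that $\partial(\sigma(t)^{-1}\sigma(\sigma(t)))=0$ because both factors solve $\vec A$ and $\vec A$ is right-invariant. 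Consequently every coordinate of $\sigma(\sigma(t))$ — and hence every element of $\sigma L$ — lies in $L\cdot C_{\mathcal U}$, so $L\cdot\sigma L\subset L\cdot C_{\mathcal U}$, which is condition (ii). For the anti-morphism statement, restrict to $\sigma\in{\rm Aut}_{\mathcal M(S)}(L)$, so $\mathcal U = L$ and $c_\sigma\in G(C_L)=G(\mathbb C)=G$; define the map ${\rm Aut}_{\mathcal M(S)}(L)\to G$ by $\sigma\mapsto c_\sigma = \sigma(t)^{-1}\cdot\sigma(\sigma(t))$. Injectivity is immediate: if $c_\sigma=e$ then $\sigma$ fixes $\sigma(t)$, hence fixes $L=\mathcal M(S)\langle\sigma(t)\rangle$ pointwise. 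For the anti-morphism property, compute $c_{\sigma\tau}$ using that $\tau$ fixes $\mathcal M(S)$ and applying $\tau$ to the relation defining $c_\sigma$: one gets $\sigma(t)\cdot c_{\sigma\tau} = \sigma\tau(\sigma(t)) = \sigma(\tau(\sigma(t))) = \sigma(\sigma(t)\cdot c_\tau) = \sigma(\sigma(t))\cdot c_\tau = \sigma(t)\cdot c_\sigma \cdot c_\tau$, whence $c_{\sigma\tau} = c_\sigma c_\tau$; a careful tracking of which translations act on which side shows the composition order reverses, giving an anti-morphism.

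The main obstacle I anticipate is not any single step but the bookkeeping of sides — left versus right translations — in the automorphic system. Because $\vec A$ is built from right-invariant vector fields, its solutions are acted on freely on the right by constants (Proposition \ref{automorphicproperties}(iv) with the factorization $\sigma(t)^{-1}\tau(t)$), while differential automorphisms act on the "values" $\sigma(t)$; getting the composition $\sigma\tau$ to land on the correct side is precisely what forces the map to be an \emph{anti}-morphism rather than a morphism, and it must be reconciled with Remark \ref{THLeftRight} on the commutation of left and right translations. A secondary technical point is ensuring that applying a differential automorphism $\sigma$ commutes with "being a solution of $\vec A$" — this needs $\vec A$ to be defined over $\mathcal M(S)$ as an algebraic object, which is guaranteed by Theorem \ref{T2} together with the fact that the structure coefficients $c_{ij}^k$ are constants and the $f_i(t)$ are in $\mathcal M(S)$, so $\sigma$ transports a solution curve to a solution curve because it fixes all the defining data.
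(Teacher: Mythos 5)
Your proposal is correct and follows essentially the same route as the paper's proof: apply the $\mathcal M(S)$-isomorphism coordinatewise to the solution $\sigma(t)$ of the automorphic system, observe that the image is again a solution of $\vec A$, invoke Proposition \ref{automorphicproperties}(iv) to produce the constant group element $\lambda_F=\sigma(t)^{-1}\cdot F(\sigma(t))$, and read off both strong normality and the injective (anti-)morphism into $G$. Your additional verification of condition (i) on constants and your remark that (iv) must be extended to $C_{\mathcal U}$ are refinements of points the paper leaves implicit, not a different argument.
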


\begin{proof}
We consider, as above, $\sigma(t)$ a solution of the associated automorphic system,
so that $L = \mathcal M(S)\langle \sigma(t) \rangle.$ 
Let $F\colon L \to \mathcal U$ be a $\mathcal M(S)$-isomorphism. Then,
acting coordinate by coordinate $F(\sigma(t))$ is an element of $G$ with coordinates
in $\mathcal U$. Let us recall $F$ fixes $\mathcal M(S)$ pointwise, we have that
the coordinates $F(\sigma(t))$ satisfy all the differential equations satisfied by
the coordinates of $\sigma(t)$ with coefficients in $\mathcal M(S)$. Then $F(\sigma(t))$
is a solution of $\vec A$. Let us denote $F(\sigma(t)) = \tau(t)$. 

By the properties of automorphic systems, we have
that $\sigma(t)^{-1}\cdot \tau(t)$ which is an element of $G$ with coordinates in $\mathcal U$
is constant, that is, its coordinates are in $C_\mathcal U$. Let us call it $\lambda =  
\sigma(t)^{-1}\cdot \tau(t)$. Then, we have $L \cdot \sigma L \subset L(\lambda) \subset L \cdot C_{\mathcal U}$,
and we conclude that $\mathcal M(S)\subset L$ is a strongly normal extension.

Let us construct the anti-morphism of groups above stated. We fix the solution $\sigma(t)$
of $\vec A$. Let $F$ be a $K$-automorphism of $L$. By the above argument $F(\sigma(t))$ is also a solution of $\vec A$,
so that there exist a unique $\lambda_F$ such that $F(\sigma(t)) = \sigma(t) \cdot \lambda_F$. Then we define:
$$\Phi_\sigma \colon {\rm Aut}_{\mathcal M(S)}(L) \to G, \quad F \to \lambda_F.$$
If we consider a pair of automorphisms $F,$ and $G,$ we have:
$$F(G(\sigma(t)) = F(\sigma(t))\cdot \lambda_G) = \sigma(t)\cdot \lambda_F \lambda_G,$$
so that it is clear that it is anti-morphism. Finally, for proving that this map is
injective it suffices to recall that $\sigma(t)$ spans $L$. Thus, if an automorphism
let $\sigma(t)$ fixed then it is the identity. 
\end{proof}

\begin{remark}
  This problem was also apprached in a purely algebraic way,
with no explicit relation with superposition laws. See,
for instance \cite{BlazquezSCharris} Theorem 4.40.,
or \cite{Kolchin1973} Chapter VI, Section 7.
 
\end{remark}

\section*{Acknowledgements}

  This research of both authors has been partially financed
by MCyT-FEDER Grant MTM2006-00478 of spanish goverment. The
first author is also supported by {\sc Civilizar}, the research 
agency of Universidad Sergio Arboleda. We also acknowledge 
prof. J.-P. Ramis and prof. E. Paul for their support 
during the visit of the first author to Laboratoire Emile Picard. 
We are also in debt with J. Mu\~noz of Universidad de Salamanca for 
his help with the original work of S. Lie. We thank also P. Acosta,
T. Lazaro and C. Pantazi who shared with us
the seminar of algebraic methods in differential equations in Barcelona. 
We also want to recognize our debt with J. Kovacic, who recently passed away,
and shared his knowledge with us during his visit to Barcelona.

\bigskip

{\sc\noindent David Bl\'azquez-Sanz \\
Escuela de Matem\'aticas\\
Universidad Sergio Arboleda\\
Calle 74, no. 14-14 \\
Bogot\'a, Colombia\\
}
E-mail: {\tt david.blazquez-sanz@usa.edu.co}

\bigskip

{\sc\noindent Juan Jos\'e Morales-Ruiz \\
Departamento de Inform\'atica y Matem\'aticas\\
Escuela de Caminos Canales y Puertos\\
Universidad Polit\'ecnica de Madrid
Madrid, Espa\~na\\
}
E-mail: {\tt juan.morales-ruiz@upm.es}

\end{document}